\documentclass[11pt,letterpaper]{article}

\usepackage[T1]{fontenc}
\usepackage[utf8]{inputenc}
\usepackage{lmodern}
\usepackage[margin=1in]{geometry}
\usepackage{microtype}
\usepackage{enumitem}
\usepackage{amsmath,amssymb,amsthm,mathtools}
\usepackage{graphicx}
\usepackage{xcolor}
\usepackage{booktabs}
\usepackage{float}
\usepackage{algorithm}
\usepackage{algpseudocode}
\usepackage[round]{natbib}
\usepackage{xurl}
\usepackage{hyperref}
\hypersetup{
  colorlinks=true,
  linkcolor=blue,
  citecolor=blue,
  urlcolor=blue,
  pdfauthor={Anonymous Authors},
  pdftitle={Sliced Wasserstein Barycenters: The Analysis Approach within the Barycentric Coding Model in the Wasserstein Space}
}

\newtheorem{theorem}{Theorem}
\newtheorem{lemma}{Lemma}
\newtheorem{remark}{Remark}
\newtheorem{claim}{Claim}
\newtheorem{proposition}{Proposition}
\newtheorem{example}{Example}
\newtheorem{corollary}{Corollary}

\newcommand{\R}{\mathbb{R}}
\newcommand{\Ptwo}{\mathcal{P}_2}
\newcommand{\SW}{SW_2}
\newcommand{\W}{W_2}
\newcommand{\sphere}{\mathbb{S}^{d-1}}
\newcommand{\Id}{\mathrm{Id}}

\newcommand{\ip}[2]{\left\langle #1,#2\right\rangle}

\title{Sliced Wasserstein Barycenters: The Analysis Approach within the Barycentric Coding Model in the Wasserstein Space}
\author{
  Rocío Díaz Martín\\[0.3em]
  \small Department of Mathematics\\
  \small Florida State University
  \and
  James M. Murphy\\[0.3em]
  \small Department of Mathematics\\
  \small Tufts University
}
\date{}

\begin{document}
\maketitle

\begin{abstract}

We study sliced Wasserstein barycenters from a variational perspective in Wasserstein space, with emphasis on the \emph{analysis problem} in the barycentric coding model: given a query measure and a finite dictionary of probability measures, recover simplex-constrained barycentric coordinates. We derive an explicit first-variation formula for the sliced Wasserstein barycenter functional with respect to the classical $2$-Wasserstein geometry. The resulting gradient is expressed as an average of 1D monotone transport displacements over the sphere. The stationarity equation yields a Gram-matrix criterion for estimating barycentric coordinates through a quadratic program, while its fixed-point form leads to a \emph{synthesis} iteration to new barycentric measures. For Gaussian templates, we show that every global sliced Wasserstein barycenter is Gaussian; however, the functional may admit non-Gaussian critical points. We also provide a certificate for global optimality built from the 1D transport potentials. Numerical experiments demonstrate accurate coordinate recovery on synthesized queries and illustrate the use of these coordinates for data representation and stationarity residuals for reliability assessment.
\end{abstract}

\section{Introduction}
This work is part of a bigger program on \emph{variational characterizations} of {optimal transport} (OT) barycenters.
Let $\mathcal{P}(\mathbb R^d)$ denote the space of probability measures on $\mathbb R^d$,  $\mathcal{P}_p(\mathbb R^d)$ be the subset of probability measures with finite $p$-th moment ($1<p<\infty$). Given $N\in \mathbb N$, let $[N]:=\{1,2,\dots,N\}$, fix measures $\nu_1,\dots,\nu_N\in\mathcal{P}_p(\mathbb R^d)$ and weights $\lambda=(\lambda_1,\dots,\lambda_N)\in \Delta_{N-1}:=\{\lambda\in \mathbb R^N:\lambda_j\geq0\ \forall j\in[N],\ \sum_{j=1}^N\lambda_j=1\}
$
(the $(N-1)$-dimensional probability simplex).
We consider OT barycenter functionals of the form 
\begin{equation}
    \mathcal{F}^\lambda:\mathcal{P}(\R^d)\to \R, \qquad  
    \mathcal{F}^{\lambda}(\mu)=\frac{1}{2}\sum_{j=1}^N \lambda_j \, d(\nu_j,\mu)^2, \label{eq: objective}
\end{equation}
where $d(\cdot,\cdot)$ is a transport-based discrepancy on the space of probabilities. Typical choices include the classical OT distance, namely the $2$-Wasserstein distance $W_2$ when restricting the domain to $\mathcal P_2(\mathbb R^d)$ \cite{agueh2011barycenters,alvarez2016fixed}; more generally,  $p$-Wasserstein barycenter functionals on $\mathcal P_p(\mathbb R^d)$ have also been studied by replacing the quadratic power in \eqref{eq: objective} with the corresponding $p$-power \cite{brizzi2025p}. Other variants include entropically regularized OT costs \cite{cuturi2013sinkhorn, janati2020debiased, mallery2025synthesis}, linearized OT distances \cite{wang2013linear, werenski2025linearized}, and several other transport-inspired dissimilarities between probability measures \cite{cuturi2014fast,janati2020debiased,huang2021projection}. More general settings include unbalanced OT costs on the space of Radon measures \cite{friesecke2021barycenters}, as well as Gromov-Wasserstein (GW) distances \cite{memoli2011gromov} for measures supported on different metric spaces \cite{peyre2016gromov,beier2023multimarginal,diazmartin2026gromov}.

When existence is guaranteed, a barycenter associated with the weights $\lambda\in \Delta_{N-1}$ will be denoted by
\begin{equation}\label{eq: sw barycenter point}
    \nu_\lambda\in\arg\min_{\mu}\mathcal{F}^\lambda(\mu)
\end{equation}    
i.e., $\nu_\lambda$ is a weighted center of mass of the measures $\nu_1,\ldots,\nu_N$ in the chosen transport geometry.
For the classical $2$-Wasserstein problem, the foundational work \cite{agueh2011barycenters} proves existence and, under an appropriate regularity assumption on at least one input measure, uniqueness of the barycenter. Its methods are variational and use the dual formulation of OT. Subsequent work develops a fixed-point approach \cite{alvarez2016fixed} or uses the %
Riemannian-like structure of Wasserstein space \cite{zemel2019frechet,panaretos2020invitation}.

As described in \cite{chewi2020gradient}, expression \eqref{eq: objective} can be understood in the statistical setting as observing $N$ independent, weighted realizations $\nu_1,\dots,\nu_N$ of an unknown random probability measure on $\mathbb R^d$ with law $P$. 
The goal is then to construct a single probability measure that serves as an appropriate notion of the population average under $P$. 

In the present work, we focus on the case where $d(\cdot,\cdot)$ in \eqref{eq: objective} is the sliced Wasserstein (SW) distance, $SW_2$, which replaces the full high-dimensional OT problem by an average of one-dimensional $2$-Wasserstein distances over projection directions \cite{boneel, rabin1, Bonnotte}. Thus, we study %
\begin{equation}\label{eq: SW bary functional}
  \mathcal F^\lambda(\mu)=\frac{1}{2}\sum_{j=1}^N  \lambda_j\, SW_2(\mu,\nu_j)^2, \quad \mu\in \mathcal{P}_2(\mathbb R^d). 
\end{equation}
We consider differentiation in Wasserstein space $(\mathcal{P}_2(\R^d),W_2)$, and set $\nabla_{W_2}\mathcal{F}^\lambda\equiv 0$ with the aim of characterizing critical points.

Two problems arise in the context of the %
\emph{barycentric coding model} (BCM) \cite{bonneel2016wasserstein,werenski2022measure,werenski2025linearized}, where a set of essential components, $\{\nu_i\}_{i=1}^N$, referred to as a \emph{dictionary} or \emph{templates},  is used to  encode a family of probability measures through weights $\lambda\in\Delta_{N-1}$, which play the role of barycentric coordinates in a nonlinear space of measures. Borrowing the language of harmonic analysis, the two complementary tasks within the BCM  are:
\begin{enumerate}[leftmargin=*]
    \item \emph{The Synthesis Problem:} The goal is to identify the weighted center of mass $\nu_\lambda$, given $\lambda\in\Delta_{N-1}$ and $\{\nu_i\}_{i=1}^N$. That is, to solve problem \eqref{eq: sw barycenter point}.
    \item \emph{The Analysis Problem:} The goal is to determine weights $\lambda$ for a given measure $\mu$ assumed to lie in set of barycenters generated by fixed probability measures $\{\nu_i\}_{i=1}^N$. That is, we assume  $\mu=\nu_\lambda$ as in \eqref{eq: sw barycenter point} for some unknown vector $\lambda\in\Delta_{N-1}$, and we aim to determine such coordinates.
\end{enumerate}
During \emph{synthesis}, we map coordinates to a measure ($\lambda\mapsto \nu_\lambda$), whereas \emph{analysis} seeks an inverse coordinate representation. The latter may be non-unique when different weight vectors generate the same barycenter.

\paragraph{Related Work.}
\begin{itemize}[leftmargin=*]
    \item \emph{Barycentric Coding Model.} The BCM belongs to a broader class of representation  methods in which data are encoded by coordinates relative to a finite set of   reference elements, beyond linear bases in vector spaces. Related   paradigms include dictionary learning \cite{tovsic2011dictionary} and     matrix factorization \cite{lee2000algorithms}, while         applications range from data compression, to generative modeling, clustering, and embedding in machine learning \cite{DeRoseMeyer2006, zhang2014local, hormann2017generalized, warren2007barycentric}.
    In OT, 
    Wasserstein barycentric coordinates were proposed in
    \cite{bonneel2016wasserstein} for   histogram regression and related inverse
    problems.
            The analysis problem was subsequently studied using the
    geometry of the $2$-Wasserstein space in \cite{werenski2022measure},                 and later developed for  linearized Wasserstein barycenters         \cite{werenski2025linearized}. For GW, \cite{diazmartin2026gromov} formulate the same synthesis--analysis dichotomy and develop fixed-point and differentiation-based approaches, building  on \cite{peyre2016gromov} and \cite{chowdhury2020gromov}. %
    These works distinguish global optimality from the Karcher condition. The latter requires the first-variation of the 
functional \eqref{eq: objective} to vanish at the candidate point, and is therefore a necessary
stationarity condition. Without
additional convexity or structural hypotheses, stationarity need not
imply that the candidate is a global barycenter; it may nevertheless provide a
tractable surrogate for the analysis problem.

    \item \emph{SW Barycenters.} 
    Foundational references on sliced OT and 
projection-based variants include
\cite{rabin1,Bonnotte,boneel,kolouri2019generalized,
deshpande2019maxsliced,paty2019subspace}; see also
\cite{nguyen2025introduction} for a recent survey.
Recent work on the
metric, topological, and geometric properties of spaces of
probability measures endowed with sliced transport distances, and
on their comparison with classical Wasserstein spaces, includes
\cite{kitagawa2026sliced,carlier2025sharp,park2025geometry}. 
Recent work has also investigated the
critical-point structure and gradient dynamics of
SW objectives \cite{vauthier2025towards}.
Related projection and matching procedures appear in
\cite{pitie2007automated,li2024approximation,li2023measure}. 
Sliced and Radon Wasserstein barycenters were introduced and
studied computationally in \cite{boneel}. The
connection between sliced multi-marginal transport and barycentric
averaging under SW is developed in
\cite{cohen2021sliced}. 
Broader frameworks include \cite{kitagawa2024disintegrated, kitagawa2026barycenters}.

    \item \emph{Strategies and Techniques.} To compute Wasserstein gradients, we use Otto's formal Riemannian calculus \cite{otto2001geometry} together with 
    \cite{ambrosio2005gradient}; see also the first-order optimization framework in \cite{lanzetti2025firstorder}. 
    The overall approach to the analysis problem follows \cite{werenski2022measure}.     The fixed-point procedure and the treatment of the Gaussian case are inspired by \cite{alvarez2016fixed}, developed for classical $2$-Wasserstein barycenters. For global minimizers, we follow the  strategy of \cite{agueh2011barycenters}. 
\end{itemize}

\paragraph{Contributions and Organization.} Section \ref{sec: prelim} provides preliminaries on the SW distance. Section \ref{sec: critical points} computes the Wasserstein first-variation of the barycentric functional \eqref{eq: SW bary functional}: Proposition \ref{thm: gradient} gives the averaged 1D displacement formula, and Corollary \ref{cor:direct-necessary-stationarity} gives the resulting necessary stationarity condition. By using these results, Section \ref{sec: analysis} formulates the analysis problem as a simplex-constrained quadratic program, and Section \ref{sec: fix-point} provides a fixed-point reformulation used in a descent scheme for the synthesis update. 
Section \ref{sec: Gaussian} treats the particular case of Gaussian probability measures, showing that global barycenters of Gaussian templates are Gaussian, although critical points might not be Gaussian, and specializes the analysis problem to a linear system in the weights. Section \ref{sec: global} gives a criterion for global minimality and an exact analysis theorem under additional compactness and regularity hypotheses.
Section~\ref{sec: alg and experiments} presents our algorithms, with particular emphasis on the analysis problem, together with numerical experiments and applications illustrating how the proposed approach enables fast estimation of barycentric coordinates.
The appendices contain the proofs of our results, additional experiments, and a link to the anonymized code, and recalls some well-known properties for completeness.

\section{Preliminaries}
\label{sec: prelim}

For $d\geq2$, the $2$-SW distance \cite{boneel, rabin1, Bonnotte} is defined by
\begin{equation}\label{eq: SW}
    \SW^2(\nu,\mu)
    :=
    \int_{\sphere} \W\bigl((P_\theta)_\#\nu,(P_\theta)_\#\mu\bigr)^2 \, d\sigma(\theta),    
\end{equation}
where $\sigma$ denotes the normalized uniform surface measure on $\sphere$, and $P_\theta:\R^d\to \R$ is the scalar projector with respect to the $\theta\in \sphere$ direction, that is, $P_\theta(x)=x\cdot \theta$, and therefore $W_2$ in \eqref{eq: SW} is the 1D Wasserstein distance in $\mathcal{P}_2(\R)$.  For simplicity in the notation, we use $\mu^\theta:=(P_\theta)_\#\mu$.
We note that $SW_2^2$ is convex in the following sense (see \cite{nguyen2025introduction}):  
Let $\nu_1,\nu_2,\mu \in \mathcal{P}_2(\mathbb{R}^d)$, and define the convex combination
$
\nu_t=t\nu_1+(1-t)\nu_2$, $t\in[0,1]$. 
Then,
$SW_2(\nu_t,\mu)^2
\leq t\,SW_2(\nu_1,\mu)^2+(1-t)\,SW_2(\nu_2,\mu)^2$.
Notice that this is a notion of \emph{flat convexity}, which is not the same as \emph{geodesic convexity} in $(\Ptwo(\R^d),W_2)$.

Throughout this work, we will repeatedly use that
\begin{equation}\label{eq: Id/d}
\int_{\sphere}\theta\theta^\top\,d\sigma(\theta)
=\tfrac1d\Id,
\end{equation}    
where $\Id:\mathbb R^d\to\mathbb R^d$ denotes the identity map. 

\section{SW Barycenters as Critical Points in Wasserstein Space}\label{sec: critical points}

In this section we study critical points of the SW barycenter functional \eqref{eq: SW bary functional} in Wasserstein space.
That is, the metric used for differentiation is the ambient Wasserstein distance $W_2$, although the functional itself is defined using $SW_2$. 
Our primary goal is to calculate the Wasserstein gradient of \eqref{eq: SW bary functional}.
In ordinary calculus, if $F:\R^d\to\R$ is differentiable, the gradient is the vector satisfying
$\left.\frac{d}{dt}\right|_{t=0}F(x+tv)=\nabla F(x)\cdot v$ for every $v\in\mathbb R^d$.
Computing Wasserstein gradients follows the same basic idea, but a ``point'' is now a probability measure $\mu$, and a direction is represented by a velocity field $v:\mathbb R^d\to\mathbb R^d$ that moves each particle $x$. This is the viewpoint of \cite{otto2001geometry,ambrosio2005gradient}:
Given a functional $\mathcal{F}$ over $\mathcal{P}_2(\mathbb R^d)$, such as \eqref{eq: objective},
if we consider the curve of probability measures $\mu_t=(\Id+t v)_\#\mu$,
then a vector field $G\in L^2(\mu;\R^d):=\{G:\mathbb R^d\to\mathbb R^d:\, \int_{\mathbb R^d} \|G(x)\|^2d\mu(x)<\infty\}$
represents the \emph{transport first-variation} of $\mathcal F$ at $\mu$ if, %

\begin{equation*}
   \frac{d}{dt}\Big|_{t=0}\mathcal F(\mu_t)
   =\int_{\R^d}\ip{G(x)}{v(x)}d\mu(x) \quad \forall v\in C_c^1(\mathbb R^d;\mathbb R^d).
\end{equation*}   

In general, for a time-dependent curve $\mu_t$, the velocity field $v_t$ is described by the continuity equation
\begin{equation}\label{eq: cont eq in Rd}
\partial_t \mu_t + \nabla\cdot(\mu_tv_t)=0,\quad
        \mu_0=\mu
\end{equation}
(where the equation is understood in the sense of distributions).
This equation is the mass-conservation law: mass moves with velocity $v_t$ and is neither created nor destroyed. \cite[Thm~8.3.1]{ambrosio2005gradient} shows that absolutely continuous curves in Wasserstein space are exactly the curves that can be represented this way, with an appropriate velocity field.  The Wasserstein tangent space is the closure in $L^2(\mu;\mathbb R^d)$ of $\{\nabla\psi:\psi\in C_c^\infty(\mathbb R^d)\}$
\cite[Def.~8.4.1]{ambrosio2005gradient}.
When $G$ belongs to this space and satisfies the preceding identity, we call it the \emph{Wasserstein gradient in the transport (Otto) sense} and write $G=\nabla_{W_2}\mathcal F(\mu)$.
It is a vector field on $\mathbb R^d$, defined $\mu$-almost everywhere, rather than a single vector in $\mathbb R^d$.

The transport first-variation should be distinguished from the \emph{scalar first-variation}, or functional derivative, denoted by
$\frac{\delta\mathcal F}{\delta\mu}(\mu)$.
When it exists, this scalar function represents differentiation along additive perturbations:
$
   \left.\frac{d}{dt}\right|_{t=0}\mathcal F((1-t)\mu+t\rho)
   =\int_{\mathbb R^d}\frac{\delta\mathcal F}{\delta\mu}(\mu)(x)\,d(\rho-\mu)(x).
$   
Under regularity and integrability assumptions that justify the chain rule and integration by parts in \eqref{eq: cont eq in Rd},
$
   \frac{d}{dt}\mathcal F(\mu_t)
   =\int_{\mathbb R^d}
     \nabla_x\frac{\delta\mathcal F}{\delta\mu}(\mu_t)(x)
     \cdot v_t(x)\,d\mu_t(x).
$ Here $\nabla_x$ is the ordinary Euclidean gradient with respect to $x$.
Consequently, when the spatial gradient belongs to the tangent space,
   $\nabla_{W_2}\mathcal F(\mu)(x)
   =\nabla_x\frac{\delta\mathcal F}{\delta\mu}(\mu)(x)$
   $\mu\text{-a.e.}$ \cite[Chs.~7--8]{Santambrogio-OTAM}.
This provides a useful interpretation of the Wasserstein gradient, but Proposition \ref{thm: gradient} computes the transport derivative directly.%

Let us fix measures $\nu_1,\ldots,\nu_N\in\Ptwo(\R^d)$ and weights
$\lambda=(\lambda_1,\ldots,\lambda_N)\in\Delta_{N-1}$.
We state that the Wasserstein gradient at $\mu$ of the functional $\mathcal{F}^\lambda$ defined by \eqref{eq: SW bary functional}
is obtained by averaging 1D transport displacements over all directions $\theta\in\mathbb S^{d-1}$.

\paragraph{Standing Assumptions:} Given $\mu\in\mathcal P_2(\mathbb R^d)$, we assume that, for $\sigma$-a.e. $\theta\in\sphere$, the projected measure $\mu^\theta$ is non-atomic so that, for each $j\in[N]$, there exists a unique ($\mu^\theta$-a.e.) nondecreasing OT map from $\mu^\theta$ to $\nu_j^\theta$ \cite[Thms.~2.5, 2.9]{Santambrogio-OTAM}, denoted by
\begin{equation}\label{eq: T_theta}
    T_{j,\theta}^\mu:=T_{\mu^\theta\to\nu_j^\theta}.
\end{equation}
For example, notice that, if $\mu$ is absolutely continuous with respect to Lebesgue measure on $\R^d$, then every affine hyperplane has $\mu$-mass zero. Hence, for every $t\in\R,\theta\in\sphere$,
  $\mu^\theta(\{t\})
  =\mu\bigl(\{x\in\R^d:x\cdot\theta=t\}\bigr)=0$.
Thus, $\mu^\theta$ is non-atomic. %
\begin{proposition}
\label{thm: gradient}
Let $\mu\in\Ptwo(\R^d)$. Assume that, for $\sigma$-a.e. $\theta\in\sphere$, $\mu^\theta$ is non-atomic, and for each such $\theta$ and each $j\in[N]$, let $T_{j,\theta}^\mu$ be as in \eqref{eq: T_theta}. For each $j\in[N]$, we denote by $U_j^\mu:\R^d\to\R^d$ the vector field
\begin{equation}\label{eq: Uj}
    U_j^\mu(x):=\int_{\sphere}
\bigl(x\cdot\theta - T_{j,\theta}^\mu(x\cdot\theta)\bigr)\,\theta
\,d\sigma(\theta).
\end{equation}
Then $
   \frac{d}{dt}\Big|_{t=0}\mathcal F^\lambda(\mu_t)=\int_{\R^d}\langle\sum_{j=1}^N\lambda_j\,U_j^\mu(x),{v(x)}\rangle d\mu(x)$,   for every perturbation
$\mu_t=(\Id+t v)_\#\mu$ with $v\in C_c^1(\mathbb R^d;\mathbb R^d)$,
and the unique tangent-space vector field representing the transport first-variation is
\begin{equation}\label{eq: grad of sw bary}
   \nabla_{W_2}\mathcal F^\lambda(\mu)
   =\sum_{j=1}^N\lambda_j\,U_j^\mu.
\end{equation}    
\end{proposition}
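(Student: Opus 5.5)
The plan is to reduce to one dimension slice by slice and then reassemble. Since the projection commutes with the perturbation, $(\mu_t)^\theta=(P_\theta)_\#(\Id+tv)_\#\mu$, the family $t\mapsto \mu_t^\theta$ is a perturbation of $\mu^\theta$ in $\mathcal P_2(\R)$. I would work directly with couplings rather than with 1D velocity fields, because the induced 1D perturbation $x\cdot\theta+t\,v(x)\cdot\theta$ is not of the form $s\mapsto s+t w(s)$.

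\emph{Step 1: a 1D derivative for each fixed $\theta$ and $j$.} Put $f_{j}(t,\theta):=W_2(\mu_t^\theta,\nu_j^\theta)^2$ and $\gamma_{j,\theta}:=(P_\theta,T^\mu_{j,\theta}\circ P_\theta)_\#\mu$.

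\emph{Upper bound.} The plan $(P_\theta\circ(\Id+tv),\,T^\mu_{j,\theta}\circ P_\theta)_\#\mu$ couples $\mu_t^\theta$ and $\nu_j^\theta$. Hence
\[
f_j(t,\theta)\le \int \bigl|x\cdot\theta+t\,v(x)\cdot\theta-T^\mu_{j,\theta}(x\cdot\theta)\bigr|^2d\mu(x).
\]
Expanding gives $f_j(t,\theta)\le f_j(0,\theta)+2t\int (x\cdot\theta-T^\mu_{j,\theta}(x\cdot\theta))\,v(x)\cdot\theta\,d\mu+t^2\|v\|_\infty^2$.

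\emph{Lower bound.} Take an optimal plan $\gamma_t$ between $\mu_t^\theta$ and $\nu_j^\theta$. Lift it through the map $x\mapsto x\cdot\theta+tv(x)\cdot\theta$, using a gluing or disintegration argument, to a coupling of $\mu$ and $\nu_j^\theta$. Evaluating $f_j(0,\theta)$ on this coupling gives
\[
f_j(0,\theta)\le f_j(t,\theta)-2t\int(\cdots)\,d\tilde\gamma_t+t^2\|v\|^2_\infty .
\]
I then let $t\to0$. By stability of optimal plans, $\tilde\gamma_t\to\gamma_{j,\theta}$ narrowly, and $\gamma_{j,\theta}$ is the unique optimal plan because $\mu^\theta$ is non-atomic. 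So the cross term converges, and
\[
\partial_t f_j(0,\theta)=2\int_{\R^d}\bigl(x\cdot\theta-T^\mu_{j,\theta}(x\cdot\theta)\bigr)\,\theta\cdot v(x)\,d\mu(x).
\]
This is the standard first-variation of $W_2^2$ (\cite[Thm.~10.2.2 / Prop.~7.3.6]{ambrosio2005gradient}), specialised here to 1D. The convergence of the integrand $(s-y)\,v(x)\cdot\theta$ needs care, since it is only of quadratic growth. I would handle it through uniform integrability, using that second moments stay bounded.

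\emph{Step 2: exchange derivative and integral over $\sphere$.} The bounds above control the difference quotient $\bigl(f_j(t,\theta)-f_j(0,\theta)\bigr)/t$ by
\[
2\|v\|_\infty\Bigl(\textstyle\int|x|^2d\mu\Bigr)^{1/2}+2\|v\|_\infty\bigl(M_2(\nu_j)+M_2(\mu_t)\bigr)^{1/2}+|t|\|v\|^2_\infty,
\]
uniformly in $\theta$, where $M_2$ denotes the second moment. Dominated convergence therefore applies. Fubini (the integrand is jointly measurable and in $L^1(\sigma\otimes\mu)$) gives
\[
\tfrac{d}{dt}\big|_{0}\tfrac12\lambda_j SW_2^2(\mu_t,\nu_j)=\lambda_j\int\langle U_j^\mu(x),v(x)\rangle\,d\mu .
\]
Summing over $j$ gives the displayed formula. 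Along the way I would check $U_j^\mu\in L^2(\mu)$, via Jensen: $\|U_j^\mu\|^2_{L^2(\mu)}\le\int W_2(\mu^\theta,\nu_j^\theta)^2d\sigma<\infty$.

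\emph{Step 3: tangency and uniqueness.} I need $\sum_j\lambda_jU_j^\mu$ to lie in the tangent space. Each 1D map $s\mapsto s-T^\mu_{j,\theta}(s)$ is the derivative of $\varphi_{j,\theta}(s)=s^2/2-\psi_{j,\theta}(s)$, where $\psi_{j,\theta}$ is convex with $\psi_{j,\theta}'=T^\mu_{j,\theta}$. Hence $\bigl(x\cdot\theta-T^\mu_{j,\theta}(x\cdot\theta)\bigr)\theta=\nabla_x\varphi_{j,\theta}(x\cdot\theta)$. Formally, then, $U_j^\mu=\nabla\Phi_j$ with $\Phi_j(x)=\int\varphi_{j,\theta}(x\cdot\theta)\,d\sigma$. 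To justify this rigorously, I would show that each $\theta$-slice gradient lies in the $L^2(\mu)$-closure of smooth gradients; this holds because the functions are locally Lipschitz with the right integrability, and can be mollified and cut off. The closed subspace then contains the Bochner integral over $\theta$. Uniqueness is immediate: two tangent fields representing the same functional on $C^1_c$ differ by an element orthogonal to the $\nabla\psi$, and this difference lies in the tangent space, so it vanishes.

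I expect the main obstacle to be the lower bound in Step 1, in the non-atomic but not absolutely continuous setting. This is where uniqueness of the 1D optimal plan has to substitute for regularity, and where the uniform-in-$\theta$ domination is needed. If that route fails, the fallback is to cite the superdifferentiability result of \cite{ambrosio2005gradient} for $W_2^2$ directly in 1D, and to cite semiconcavity of $W_2^2$ along generalized geodesics for the domination.
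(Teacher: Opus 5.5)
Your proof is correct, but it takes a genuinely different route from the paper.

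\textbf{The paper's route (Eulerian).} It projects the continuity equation onto each slice and takes as 1D velocity the conditional expectation $v_t^\theta(s)=\mathbb E[v_t(X)\cdot\theta\mid X\cdot\theta=s]$. This is how it deals with the issue you point out, that $x\cdot\theta+t\,v(x)\cdot\theta$ is not a function of $x\cdot\theta$. It then applies the derivative formula for $\frac12W_2^2$ along absolutely continuous curves from \cite{ambrosio2005gradient}. The 1D gradient $s-T^\mu_{j,\theta}(s)$ is pulled back via the adjoint identity $\int g\,v_0^\theta\,d\mu^\theta=\int g(x\cdot\theta)\,v(x)\cdot\theta\,d\mu$. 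Domination comes from $|\partial_tf_t(\theta)|\le W_2(\mu_t,\nu_j)\|v_t\|_{L^2(\mu_t)}$, using Jensen for conditional expectations.

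\textbf{Your route (Lagrangian).} You get an upper bound from the fixed monotone plan and a lower bound by gluing the perturbed optimal plan back onto $\mu$. A squeeze using stability and uniqueness of the 1D optimal plan then closes the argument. This amounts to reproving the first-variation theorem for $W_2^2$ in the projected setting.

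\textbf{What each buys.} The paper's route yields a chain rule along arbitrary absolutely continuous curves, but only for a.e.\ $t$. The case of exactly $t=0$ for $(\Id+tv)_\#\mu$ is handled by appeal to ``the same argument.'' Your route proves the two-sided derivative at $t=0$ directly, with a $\theta$-uniform bound on difference quotients, which is what the statement asks for. Your Step 3 also checks something the paper's proof leaves implicit: that $\sum_j\lambda_jU_j^\mu$ actually lies in the tangent space. This is needed for the ``unique tangent-space vector field'' part of the statement.

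\textbf{Two small repairs.}

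First, the cross-term integrand $(x\cdot\theta-y)\,v(x)\cdot\theta$ has linear, not quadratic, growth because $v$ is bounded. Uniform integrability is therefore immediate from the fixed marginals $\mu$ and $\nu_j^\theta$. Also, the narrow limit of $\tilde\gamma_t$ on $\R^d\times\R$ is $(\Id,T^\mu_{j,\theta}\circ P_\theta)_\#\mu$. It is identified because the $(P_\theta\times\Id)$-image of any limit point is the unique optimal plan between $\mu^\theta$ and $\nu_j^\theta$.

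Second, in Step 3, ``locally Lipschitz'' is not the right justification. $T^\mu_{j,\theta}$ can blow up at the edge of $\operatorname{supp}\mu^\theta$ (e.g.\ $\mu$ uniform on a ball, $\nu_j$ Gaussian). Moreover, $\mu^\theta$ may be singular, so Lebesgue-a.e.\ convergence of mollifications says nothing $\mu^\theta$-a.e. A cleaner argument runs as follows:
\begin{itemize}
\item In 1D, the set $\{h':h\in C_c^\infty(\R)\}$ is dense in $L^2(\rho)$ for every $\rho$. Indeed, if $w\in L^2(\rho)$ is orthogonal to all $h'$, the finite measure $w\rho$ has zero distributional derivative, hence vanishes.
\item Lifting $h_n(x\cdot\theta)$ with a cutoff $\chi_R$ then shows that $g(x\cdot\theta)\,\theta$ lies in the tangent space at $\mu$ for every $g\in L^2(\mu^\theta)$.
\item Fubini against the orthogonal complement of the tangent space puts $U_j^\mu$ in the tangent space.
\end{itemize}
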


Formula \eqref{eq: grad of sw bary} is consistent with the sliced first-variation formula
in 
\cite[Prop.~5.1.7]{Bonnotte}, obtained for measures supported on the same compact set $K\subset\R^d$ and with $\mu$ absolutely continuous; see also \cite[Prop. 4.7(c)]{vauthier2025towards}. We provide an alternative derivation in a broader setting.
\\

\textbf{Proof sketch.}
We combine two facts: (1) Projecting an absolutely continuous curve of measures $\mu_t$ as in \eqref{eq: cont eq in Rd} onto each direction and identifying the projected velocity  by conditional expectation $v_t^\theta(s):=
    \mathbb{E}\bigl[v_t(X)\cdot\theta \,\big|\, X\cdot\theta=s\bigr]$,
     $X\sim \mu_t$, results in mass-conservation $\partial_t \mu_t^\theta + \partial_s(\mu_t^\theta v_t^\theta)=0$ on each slice, with $\mu_0^\theta=\mu^\theta$. (2) The gradient of $\mathcal{P}_2(\R)\ni\rho\mapsto \frac12 W_2^2(\nu_j^\theta,\rho)$ is given by the optimal displacement in 1D. The main technical point is justifying differentiation under the integral sign over the sphere.
\\

Our first-order viewpoint is consistent with the general optimality
theory for Wasserstein space optimization in \cite[Thm.~3.2]{lanzetti2025firstorder}: In the Wasserstein geometry, at every point of Wasserstein differentiability, a local minimizer (or maximizer) must have vanishing Wasserstein gradient; see also the subdifferential framework of \cite[Ch.~10]{ambrosio2005gradient}.
The stationarity condition below is proved directly from
the sliced {transport} first-variation formula.

\begin{corollary}
\label{cor:direct-necessary-stationarity}
Assume the hypotheses of Proposition~\ref{thm: gradient}. Let
$\bar\mu$ be a local minimizer of $\mathcal F^\lambda$ in
$(\mathcal P_2(\mathbb R^d),W_2)$. Then $\sum_{j=1}^N\lambda_jU_j^{\bar\mu}(x)=0$
$\bar\mu\text{-a.e. }x.$
\end{corollary}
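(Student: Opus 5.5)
We derive the stationarity condition directly from the first-variation formula of Proposition~\ref{thm: gradient}, by the usual argument that the derivative of a locally minimized function vanishes. Set $G:=\sum_{j=1}^N\lambda_jU_j^{\bar\mu}$. Fix $v\in C_c^1(\mathbb R^d;\mathbb R^d)$ and consider the two-sided curve $\mu_t=(\Id+tv)_\#\bar\mu$, $t\in(-\varepsilon,\varepsilon)$. Since $(\Id,\Id+tv)_\#\bar\mu$ is a coupling of $\bar\mu$ and $\mu_t$,
\[
W_2(\mu_t,\bar\mu)^2\le \int_{\mathbb R^d}\|tv(x)\|^2\,d\bar\mu(x)\le t^2\|v\|_\infty^2 .
\]
Hence $\mu_t\to\bar\mu$ in $W_2$, and $\mu_t$ lies in any prescribed $W_2$-ball around $\bar\mu$ for $|t|$ small. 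Local minimality then gives $\mathcal F^\lambda(\mu_t)\ge\mathcal F^\lambda(\bar\mu)$ for all small $|t|$. So $t=0$ is a local minimum of the real function $t\mapsto\mathcal F^\lambda(\mu_t)$.

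Proposition~\ref{thm: gradient} says this function is differentiable at $t=0$, with derivative $\int\langle G,v\rangle\,d\bar\mu$. We should check that the proposition gives a genuine two-sided derivative. The formula is stated for the curve $(\Id+tv)_\#\bar\mu$ and every $v$; if it were only one-sided, we would apply it to both $v$ and $-v$. Either way we get
\[
\int_{\mathbb R^d}\langle G(x),v(x)\rangle\,d\bar\mu(x)=0 \quad\text{for every } v\in C_c^1(\mathbb R^d;\mathbb R^d).
\]

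Next we pass from this orthogonality to $G=0$ $\bar\mu$-a.e. First, $G\in L^2(\bar\mu;\mathbb R^d)$. By Jensen's inequality and $\|\theta\|=1$,
\[
\|U_j^{\bar\mu}(x)\|^2\le\int_{\sphere}|x\cdot\theta-T_{j,\theta}^{\bar\mu}(x\cdot\theta)|^2\,d\sigma(\theta).
\]
Integrating in $\bar\mu$ and using that $T_{j,\theta}^{\bar\mu}$ pushes $\bar\mu^\theta$ to $\nu_j^\theta$ optimally gives
\[
\|U_j^{\bar\mu}\|_{L^2(\bar\mu)}^2\le SW_2(\bar\mu,\nu_j)^2<\infty .
\]
Second, $C_c^1$ (indeed $C_c^\infty$) vector fields are dense in $L^2(\bar\mu;\mathbb R^d)$, because $\bar\mu$ is a finite Borel measure on $\mathbb R^d$. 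So there is a sequence $v_n\to G$ in $L^2(\bar\mu)$. Passing to the limit in the vanishing identity yields $\|G\|_{L^2(\bar\mu)}^2=0$, so $G=0$ for $\bar\mu$-a.e.\ $x$.

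An equivalent route uses only the tangent-space statement $\nabla_{W_2}\mathcal F^\lambda(\bar\mu)=G$. Take $v=\nabla\psi$ with $\psi\in C_c^\infty$ to get $G\perp\overline{\{\nabla\psi\}}$. Since $G$ itself lies in that closure, it follows that $G=0$. The step I expect to need care is the first one: confirming that the hypotheses of Proposition~\ref{thm: gradient} hold at $\bar\mu$ (they are assumed) and that its derivative is two-sided. The density and integrability steps are routine.
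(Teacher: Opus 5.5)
Your proposal is correct and follows the paper's proof essentially step for step. You perturb $\bar\mu$ along $(\Id+tv)_\#\bar\mu$, use the coupling bound to stay in a $W_2$-neighborhood, set the derivative from Proposition~\ref{thm: gradient} to zero, and conclude from the density of $C_c^1(\mathbb R^d;\mathbb R^d)$ in $L^2(\bar\mu;\mathbb R^d)$ together with $\sum_j\lambda_jU_j^{\bar\mu}\in L^2(\bar\mu;\mathbb R^d)$ (the paper establishes this square-integrability inside the proof of Proposition~\ref{thm: gradient}, while you re-derive it by Jensen, and your $\pm v$ remark on one-sided derivatives is a harmless safeguard).
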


\textbf{Proof sketch.}
Local minimality along $(\Id+tv)_\#\bar\mu$ and
Proposition~\ref{thm: gradient} imply that
$\sum_j\lambda_jU_j^{\bar\mu}$ is orthogonal to every
$v\in C_c^1(\mathbb R^d;\mathbb R^d)$.
Density of these fields in $L^2(\bar\mu;\mathbb R^d)$
then gives the conclusion.

\section{The Analysis Problem}\label{sec: analysis}
In this section we study the analysis problem. Precisely, assuming we are given $\mu$ in the SW barycenter space generated by templates $\{\nu_j\}_{j=1}^N$, i.e.,  $\mu=\nu_\lambda$ as in \eqref{eq: sw barycenter point} for an unknown $\lambda\in \Delta_{N-1}$ (not necessarily unique), and
our goal is to recover such a $\lambda\in \Delta_{N-1}$. 

Following the strategy in \cite{werenski2022measure} for classical Wasserstein barycenters, we characterize the norm of the Wasserstein gradient of $\mathcal F^\lambda$ (Lemma \ref{lem:norm-of-grad}), leading to a quadratic program in $\lambda$ (Proposition \ref{prop:matrix-characterization}).  
\begin{lemma}\label{lem:norm-of-grad} Under the hypotheses of Proposition \ref{thm: gradient}, $\|\nabla_{W_2}\mathcal{F}^\lambda(\mu)\|_{L^2(\mu;\mathbb R^d)}^2=\lambda^\top\mathcal{A}(\mu)\lambda$,
                where, for $U_j^\mu$ as in \eqref{eq: Uj}, $\mathcal{A}(\mu)\in\R^{N\times N}$ is the positive semi-definite matrix 
\begin{equation}\label{eq: A}
        \mathcal{A}_{i,j}(\mu):=\langle U_i^\mu,U_j^\mu\rangle_{L^2(\mu;\mathbb R^d)}.
    \end{equation}
\end{lemma}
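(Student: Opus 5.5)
The plan is to substitute formula \eqref{eq: grad of sw bary} from Proposition~\ref{thm: gradient} into the squared norm and expand bilinearly. The only real work is checking that each $U_j^\mu$ lies in $L^2(\mu;\R^d)$, so that the Gram entries $\mathcal A_{i,j}(\mu)$ are finite and the expansion is legitimate.

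\emph{Integrability.} Fix $j$. By Jensen's inequality for the probability measure $\sigma$, and since $\|\theta\|=1$,
\[
\|U_j^\mu(x)\|^2\le \int_{\sphere}\bigl|x\cdot\theta-T_{j,\theta}^\mu(x\cdot\theta)\bigr|^2\,d\sigma(\theta).
\]
Integrating in $x$ against $\mu$ and applying Tonelli gives
\[
\int_{\R^d}\|U_j^\mu\|^2\,d\mu\le\int_{\sphere}\int_{\R}|s-T_{j,\theta}^\mu(s)|^2\,d\mu^\theta(s)\,d\sigma(\theta).
\]
Here we use the change of variables $s=x\cdot\theta$, i.e.\ $\mu^\theta=(P_\theta)_\#\mu$. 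Because $T_{j,\theta}^\mu$ is the optimal 1D map, the inner integral equals $W_2(\mu^\theta,\nu_j^\theta)^2$. The right-hand side is therefore $SW_2^2(\mu,\nu_j)$.

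This is finite, since $W_2(\mu^\theta,\nu_j^\theta)^2\le 2(M_2(\mu)+M_2(\nu_j))$ uniformly in $\theta$ and both measures lie in $\Ptwo(\R^d)$. Hence $U_j^\mu\in L^2(\mu;\R^d)$.

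\emph{Measurability.} To apply Tonelli we need $(x,\theta)\mapsto T_{j,\theta}^\mu(x\cdot\theta)$ to be jointly measurable. I expect this to be the only mildly delicate point. I would handle it by writing $T_{j,\theta}^\mu=F_{\nu_j^\theta}^{-1}\circ F_{\mu^\theta}$, with $F$ the cumulative distribution function and $F^{-1}$ the quantile function. Both are jointly measurable in $(s,\theta)$, because $\theta\mapsto\mu^\theta$ is weakly continuous. The paper presumably already relies on this in the proof of Proposition~\ref{thm: gradient}, so I would simply invoke it.

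\emph{Expansion.} With integrability in hand, the $L^2(\mu;\R^d)$ inner product is bilinear, so
\[
\Bigl\|\sum_j\lambda_jU_j^\mu\Bigr\|^2=\sum_{i,j}\lambda_i\lambda_j\ip{U_i^\mu}{U_j^\mu}_{L^2(\mu;\R^d)}=\lambda^\top\mathcal A(\mu)\lambda.
\]
By Cauchy--Schwarz each entry satisfies $|\mathcal A_{i,j}|\le SW_2(\mu,\nu_i)\,SW_2(\mu,\nu_j)$, so $\mathcal A(\mu)$ is finite.

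\emph{Positive semi-definiteness.} $\mathcal A(\mu)$ is symmetric, being a Gram matrix. For any $c\in\R^N$ the same expansion gives $c^\top\mathcal A(\mu)c=\|\sum_jc_jU_j^\mu\|^2_{L^2(\mu;\R^d)}\ge0$, which completes the argument.
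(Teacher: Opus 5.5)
Your proposal is correct and follows the paper's proof: substitute $\nabla_{W_2}\mathcal F^\lambda(\mu)=\sum_j\lambda_jU_j^\mu$ from Proposition~\ref{thm: gradient} and expand the squared $L^2(\mu;\R^d)$ norm bilinearly into the Gram form, with positive semi-definiteness automatic. Your integrability and measurability checks (the quantile representative, Tonelli and Jensen, giving $\|U_j^\mu\|_{L^2(\mu;\R^d)}^2\le SW_2(\mu,\nu_j)^2$) are the same ones the paper carries out inside the proof of Proposition~\ref{thm: gradient}. The paper's proof of the lemma therefore just cites that proposition and performs the expansion.
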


\textbf{Proof sketch.}
Insert $\nabla_{W_2}\mathcal F^\lambda(\mu)=\sum_j\lambda_jU_j^\mu$ and expand its squared $L^2(\mu;\mathbb R^d)$ norm. The resulting coefficients form the Gram matrix $\mathcal A(\mu)$, which is automatically positive semi-definite.

We now  provide a characterization of  critical points of the SW barycenter functional \eqref{eq: SW bary functional}, which will serve in the analysis problem, i.e., when the task is to retrieve weights $\lambda\in \Delta_{N-1}$ of a measure $\mu$ assumed in the barycenter space generated by fixed measures $\{\nu_j\}_{j=1}^N$.

\begin{proposition}\label{prop:matrix-characterization}
Let $\mu\in\mathcal P_2(\mathbb R^d)$ be such that $\mu^\theta$ is non-atomic
for $\sigma$-a.e. $\theta\in\mathbb S^{d-1}$. Let $\mathcal{A}(\mu)$ be defined by \eqref{eq: A}.
Then there exists $\lambda^*\in\Delta_{N-1} $ such that $\nabla_{W_2}\mathcal F^{\lambda^*}(\mu)=0$
in $L^2(\mu;\mathbb R^d)$ if and only if
\begin{equation}\label{eq: mat cond}
 \min_{\lambda\in\Delta_{N-1} }\lambda^\top\mathcal A(\mu)\lambda=0.   
\end{equation}
\end{proposition}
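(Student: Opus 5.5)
The argument is a direct consequence of Lemma~\ref{lem:norm-of-grad} together with positive semi-definiteness of $\mathcal A(\mu)$. The hypotheses of Proposition~\ref{thm: gradient} are exactly the standing non-atomicity assumption imposed here. Hence, for every $\lambda\in\Delta_{N-1}$, the gradient $\nabla_{W_2}\mathcal F^\lambda(\mu)=\sum_j\lambda_jU_j^\mu$ is well defined, and Lemma~\ref{lem:norm-of-grad} gives the identity $\|\nabla_{W_2}\mathcal F^\lambda(\mu)\|^2_{L^2(\mu;\R^d)}=\lambda^\top\mathcal A(\mu)\lambda$. Before using this, I will check that each $U_j^\mu$ lies in $L^2(\mu;\R^d)$, so that the entries of $\mathcal A(\mu)$ are finite. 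By Jensen on the sphere,
\[
\|U_j^\mu\|_{L^2(\mu)}^2\le \int_{\sphere}\int_{\R}|s-T_{j,\theta}^\mu(s)|^2\,d\mu^\theta(s)\,d\sigma(\theta)=SW_2(\mu,\nu_j)^2<\infty .
\]
I take this as part of what the earlier results already provide.

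\emph{($\Rightarrow$)} Suppose $\lambda^*\in\Delta_{N-1}$ satisfies $\nabla_{W_2}\mathcal F^{\lambda^*}(\mu)=0$ in $L^2(\mu;\R^d)$. Then $(\lambda^*)^\top\mathcal A(\mu)\lambda^*=0$. Since $\mathcal A(\mu)$ is a Gram matrix, it is positive semi-definite, so $\lambda^\top\mathcal A(\mu)\lambda\ge0$ on $\Delta_{N-1}$. The minimum in \eqref{eq: mat cond} is therefore $0$ and is attained at $\lambda^*$.

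\emph{($\Leftarrow$)} The map $\lambda\mapsto\lambda^\top\mathcal A(\mu)\lambda$ is continuous and $\Delta_{N-1}$ is compact, so the infimum is attained at some $\lambda^*\in\Delta_{N-1}$. If the minimum is $0$, then $(\lambda^*)^\top\mathcal A(\mu)\lambda^*=0$. By Lemma~\ref{lem:norm-of-grad} this means $\|\sum_j\lambda_j^*U_j^\mu\|_{L^2(\mu)}=0$, i.e., $\nabla_{W_2}\mathcal F^{\lambda^*}(\mu)=0$ $\mu$-a.e.

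There is no real obstacle. The only points needing care are that the minimum is actually attained, which follows from compactness and continuity, and that the quadratic form represents the squared $L^2$ norm, which is finite because of the integrability bound above.
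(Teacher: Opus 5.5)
Your proposal is correct and follows essentially the same route as the paper. Lemma~\ref{lem:norm-of-grad} identifies $\lambda^\top\mathcal A(\mu)\lambda$ with the squared $L^2(\mu;\mathbb R^d)$ norm of the gradient, positive semi-definiteness gives nonnegativity, and compactness of $\Delta_{N-1}$ gives attainment of the minimum; your integrability check on $U_j^\mu$ is the same Jensen/Tonelli bound the paper carries out inside the proof of Proposition~\ref{thm: gradient}.
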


\textbf{Proof sketch.}
Using Lemma \ref{lem:norm-of-grad}, $\lambda^\top\mathcal A(\mu)\lambda=0$ exactly when the Wasserstein gradient vanishes. Compactness of $\Delta_{N-1}$ guarantees that the minimum is attained. 
\\

The matrix condition \eqref{eq: mat cond} characterizes the existence of a
simplex weight vector $\lambda^*$ for which $\mu$ is a stationary point of the SW
barycentric functional. %
In general, it does not imply that $\mu$ is a (global) SW barycenter. However, 
it provides a diagnostic test: if $\mu$ is a SW barycenter for some $\lambda$, then necessarily
$\lambda^\top\mathcal A(\mu)\lambda=0$. 

\section{Fixed-Point  Synthesis}\label{sec: fix-point}

Proposition \ref{thm: gradient} and its Corollary \ref{cor:direct-necessary-stationarity} show that a critical point $\mu$ of $\mathcal{F}^\lambda$ for the weight vector $\lambda\in \Delta_{N-1}$ satisfies $\sum_{j=1}^N\lambda_j \, U_j^\mu(x)=0$
for $\mu\text{-a.e. }x$.
Using the identity \eqref{eq: Id/d},
the critical-point condition can be written as
\begin{equation}\label{eq: c.p. condition as f.p.}
    {
 x
   =\underbrace{d\sum_{j=1}^N\lambda_j
     \int_{\sphere}T_{j,\theta}^\mu(x\cdot\theta)\theta\,d\sigma(\theta)}_{B_\mu(x)},
   \quad \mu\text{-a.e. }x.}
\end{equation} 
This is the sliced analogue of the familiar Wasserstein barycenter condition
$x=\sum_{j=1}^N\lambda_jT_j(x)$,
where $T_j$ transports the barycenter to $\nu_j$ \cite{agueh2011barycenters,alvarez2016fixed}. Inspired by the fixed-point strategy in
\cite[Sec. 3]{alvarez2016fixed},
Proposition \ref{prop:sliced-fixed-point-descent} provides a descent estimate, which is the key for a fixed-point descent scheme aimed at approximating
Wasserstein critical points of $\mathcal F^\lambda$. 

\begin{proposition}
\label{prop:sliced-fixed-point-descent}
Let $\mu^+:=(B_\mu)_\#\mu$. Under the hypotheses of Proposition~\ref{thm: gradient}, we have the descent estimate
\begin{align}\label{eq:sliced-descent-strong}
\mathcal F^\lambda(\mu)
&\geq \mathcal F^\lambda(\mu^+)
+\tfrac{1}{2d}\int_{\R^d}|x-B_\mu(x)|^2\,d\mu(x) \geq \mathcal F^\lambda(\mu^+)
+\tfrac{1}{2d}W_2(\mu,\mu^+)^2.
\end{align}    
In particular, $\mathcal F^\lambda(\mu^+)
\le
\mathcal F^\lambda(\mu)$.
\end{proposition}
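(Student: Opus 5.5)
The plan is to bound $\mathcal F^\lambda(\mu^+)$ from above by building, for each direction $\theta$ and each $j$, an explicit (generally non-optimal) coupling between $(\mu^+)^\theta$ and $\nu_j^\theta$. Averaging these bounds over $\theta$ and $j$, and using the identity \eqref{eq: Id/d}, should make the quadratic correction $\tfrac{1}{2d}\int|x-B_\mu(x)|^2\,d\mu$ appear exactly. This mirrors the descent argument of \cite[Sec.~3]{alvarez2016fixed}, with the sliced displacement field $U^\mu:=\sum_j\lambda_jU_j^\mu$ from Proposition~\ref{thm: gradient} in place of $\Id-\sum_j\lambda_jT_j$.

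\textbf{Step 0 (well-posedness).} I first check that $\mu^+\in\mathcal P_2(\R^d)$ and that $B_\mu$ is measurable. Measurability reduces to joint measurability of $(x,\theta)\mapsto T_{j,\theta}^\mu(x\cdot\theta)$. I would obtain it from the quantile representation $T_{j,\theta}^\mu=F_{\nu_j^\theta}^{-1}\circ F_{\mu^\theta}$, whose ingredients depend measurably on $\theta$. For the moment bound, Jensen's inequality over $\sigma$ and over the weights gives
\[
|B_\mu(x)|^2\le d^2\sum_j\lambda_j\int_{\sphere}|T_{j,\theta}^\mu(x\cdot\theta)|^2\,d\sigma(\theta).
\]
Since $T_{j,\theta}^\mu$ pushes $\mu^\theta$ to $\nu_j^\theta$, we have $\int|T_{j,\theta}^\mu(x\cdot\theta)|^2\,d\mu(x)=\int s^2\,d\nu_j^\theta(s)\le\int|y|^2\,d\nu_j(y)$. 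Integrating in $x$ (Fubini) then shows $\int|B_\mu|^2\,d\mu<\infty$. I expect this measurability and integrability bookkeeping to be the main technical obstacle. The algebra below is mechanical once it is in place.

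\textbf{Step 1 (coupling bound).} For fixed $\theta$ and $j$, the map $x\mapsto\bigl(B_\mu(x)\cdot\theta,\;T_{j,\theta}^\mu(x\cdot\theta)\bigr)$ pushes $\mu$ to a coupling of $(\mu^+)^\theta$ and $\nu_j^\theta$. Hence
\[
W_2\bigl((\mu^+)^\theta,\nu_j^\theta\bigr)^2\le\int_{\R^d}\bigl|B_\mu(x)\cdot\theta-T_{j,\theta}^\mu(x\cdot\theta)\bigr|^2\,d\mu(x).
\]
I split the integrand as $a+b$ with $a=x\cdot\theta-T_{j,\theta}^\mu(x\cdot\theta)$ and $b=(B_\mu(x)-x)\cdot\theta$, and expand $a^2+2ab+b^2$. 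Then I multiply by $\lambda_j$, sum over $j$, integrate in $\sigma$, and use Fubini.

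The three resulting terms are handled as follows:
\begin{itemize}
\item The $a^2$ terms give exactly $2\mathcal F^\lambda(\mu)$, by optimality of $T_{j,\theta}^\mu$.
\item The cross terms give $2\int\langle B_\mu(x)-x,U^\mu(x)\rangle\,d\mu$. By \eqref{eq: Id/d} and the definition of $B_\mu$, we have $U^\mu(x)=\tfrac1d x-\tfrac1d B_\mu(x)$, so the cross terms equal $-\tfrac2d\int|x-B_\mu(x)|^2\,d\mu$.
\item The $b^2$ terms give $\int (B_\mu-x)^\top\bigl(\int\theta\theta^\top d\sigma\bigr)(B_\mu-x)\,d\mu=\tfrac1d\int|x-B_\mu(x)|^2\,d\mu$, again by \eqref{eq: Id/d}.
\end{itemize}
Summing yields $2\mathcal F^\lambda(\mu^+)\le2\mathcal F^\lambda(\mu)-\tfrac1d\int|x-B_\mu(x)|^2\,d\mu$, which is the first inequality after dividing by $2$.

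\textbf{Step 2 (second inequality).} The plan $(\Id,B_\mu)_\#\mu$ couples $\mu$ and $\mu^+$, so $W_2(\mu,\mu^+)^2\le\int|x-B_\mu(x)|^2\,d\mu$, which gives the second inequality. The monotonicity $\mathcal F^\lambda(\mu^+)\le\mathcal F^\lambda(\mu)$ follows at once.

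Note that the Standing Assumptions are used only to make $T_{j,\theta}^\mu$, and hence $B_\mu$, well defined. No regularity of $\mu^+$ is needed, because only upper bounds on the transport costs at $\mu^+$ are used.
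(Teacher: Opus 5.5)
Your argument is correct. It uses the same coupling as the paper but evaluates its cost through a different decomposition.

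\textbf{What is shared.} Both proofs bound $W_2((\mu^+)^\theta,\nu_j^\theta)^2$ by the cost of the coupling induced by $x\mapsto(B_\mu(x)\cdot\theta,\,T_{j,\theta}^\mu(x\cdot\theta))$. This is the only place either argument loses anything.

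\textbf{The paper's route.} The paper centers at the averaged target $\overline T_\theta^\mu(x)=\sum_j\lambda_jT_{j,\theta}^\mu(x\cdot\theta)$ and combines two facts:
\begin{itemize}
\item a variance identity over $j$;
\item the observation that $\theta\mapsto B_\mu(x)\cdot\theta$ is the $L^2(\sphere,\sigma)$-orthogonal projection of $\theta\mapsto\overline T_\theta^\mu(x)$ onto linear functions of $\theta$.
\end{itemize}
Together these give an exact three-term splitting of $2\mathcal F^\lambda(\mu)$: the displacement term $\tfrac1d\int|x-B_\mu(x)|^2\,d\mu$, a projection residual, and the spread of the $T_{j,\theta}^\mu$ around their mean. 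The coupling bound for $2\mathcal F^\lambda(\mu^+)$ then reproduces the last two terms.

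\textbf{Your route.} You expand around $x\cdot\theta$ instead. You read off the cross term from $\sum_j\lambda_jU_j^\mu=\tfrac1d(\Id-B_\mu)$ and the quadratic term from $\int\theta\theta^\top d\sigma=\tfrac1d\Id$. In effect this is a descent lemma with smoothness constant $1/d$ for a gradient step of size $d$.

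\textbf{What each buys.} Your version is shorter. It ties the estimate directly to the gradient of Proposition~\ref{thm: gradient} and to the Euler-step reading of Section~\ref{sec: alg and experiments}. It also extends verbatim to a general step: with $\mu_\tau:=(\Id-\tau\sum_j\lambda_jU_j^\mu)_\#\mu$ it gives $\mathcal F^\lambda(\mu_\tau)\le\mathcal F^\lambda(\mu)-\tau(1-\tfrac{\tau}{2d})\|\sum_j\lambda_jU_j^\mu\|_{L^2(\mu;\R^d)}^2$, hence descent for every $\tau\in(0,2d)$. The paper's version is the direct analogue of the variance argument of \cite{alvarez2016fixed}. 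It also explains geometrically why $B_\mu$ is the natural fixed-point map: it is the best linear-in-$\theta$ fit of the averaged one-dimensional targets.

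The only extra bookkeeping your route needs is justifying Fubini in the cross term. This follows from Cauchy--Schwarz, since both $a$ and $b$ lie in $L^2(\mu\otimes\sigma)$.
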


\textbf{Proof sketch.}
For each $\theta\in \mathbb S^{d-1}$, the transport plan
$(P_\theta\circ B_\mu,\,
T_{j,\theta}^\mu\circ P_\theta)_\#\mu$ 
couples $(\mu^+)^\theta$ and $\nu_j^\theta$.
Bounding the transport costs by these couplings 
gives the first inequality in \eqref{eq:sliced-descent-strong}.
The fact that $B_\mu$ is a transport map from $\mu$ to $\mu^+$ (see \eqref{eq: ite}),
gives the second inequality.
\\

Proposition \ref{prop:sliced-fixed-point-descent} motivates the fixed-point
iteration
\begin{equation}\label{eq: ite}
\mu_{n+1}:=(B_{\mu_n})_\#\mu_n.    
\end{equation}
In Section \ref{sec: alg and experiments} and Appendix \ref{sec: synt}, we describe its implementation for
approximating Wasserstein critical points of
$\mathcal F^\lambda$ and discuss its relation with the 
methods in \cite{boneel}.

\section{The Gaussian Case}\label{sec: Gaussian}
Inspired by \cite{alvarez2016fixed}, we study the case where the templates are Gaussians. 
We use the notation $\mathbb S_{++}^d$ for the set of $d\times d$ symmetric  positive definite matrices. 
Our first result shows that every SW barycenter of Gaussian measures is Gaussian.

\begin{theorem}\label{thm: gaussian-sliced-barycenters}
Let $\nu_1,\ldots,\nu_N\in\mathcal P_2(\mathbb R^d)$ be Gaussian
measures $\nu_i=\mathcal N(m_i,S_i)$, $i\in[N]$, and let $\lambda\in \Delta_{N-1}$. Assume that for at least one $i_0\in[N]$, the
covariance matrix $S_{i_0}\in \mathbb S_{++}^d$ with $\lambda_{i_0}>0$. 
Then,
\begin{equation}\label{eq: G}
    \inf_{\mu\in\mathcal P_2(\mathbb R^d)}
\mathcal F^\lambda(\mu)
=
\inf_{\substack{\mu\in\mathcal P_2(\mathbb R^d)\\
\mu\text{ Gaussian}}}
\mathcal F^\lambda(\mu),
\end{equation}
and every minimizer $\overline \mu$ of $\mathcal F^\lambda$ over
$\mathcal P_2(\mathbb R^d)$ is Gaussian.
\end{theorem}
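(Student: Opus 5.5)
The plan is a \emph{Gaussian replacement} argument carried out slice by slice, in the spirit of the Gelbrich bound used in \cite{alvarez2016fixed}. Fix any $\mu\in\mathcal P_2(\mathbb R^d)$ with mean $m$ and covariance $\Sigma$ (positive semi-definite, possibly singular). Let $\gamma_\mu:=\mathcal N(m,\Sigma)$ be the Gaussian with the same first two moments. I aim to show $\mathcal F^\lambda(\mu)\ge\mathcal F^\lambda(\gamma_\mu)$, and then identify the equality case. The first claim immediately gives \eqref{eq: G}, since Gaussians are a subset of $\mathcal P_2(\mathbb R^d)$.

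\textbf{Step 1: a one-dimensional lower bound.} Let $\rho\in\mathcal P_2(\mathbb R)$ have mean $a$ and standard deviation $s_\rho$, and let $\eta=\mathcal N(b,s^2)$. Write the quantile functions as $F_\rho^{-1}=a+f$ and $F_\eta^{-1}=b+sg$, where $f$ and $g$ have zero mean on $(0,1)$, $\|f\|_{L^2(0,1)}=s_\rho$, and $g=\Phi^{-1}$ is the standard normal quantile with $\|g\|_{L^2(0,1)}=1$. Then
\[
W_2(\rho,\eta)^2=(a-b)^2+s_\rho^2+s^2-2s\int_0^1 f g\,du \;\ge\; (a-b)^2+(s_\rho-s)^2
\]
by Cauchy--Schwarz. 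If $\rho$ is Gaussian, the bound holds with equality, because $f=s_\rho g$.

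Conversely, suppose $s>0$ and equality holds. Then $f=cg$ for some $c\ge 0$, using that $f$ and $g$ are both nondecreasing. Hence $F_\rho^{-1}=a+c\,\Phi^{-1}$, so $\rho$ is Gaussian; when $c=0$ it is a Dirac mass, i.e.\ a degenerate Gaussian.

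\textbf{Step 2: integrate over the sphere.} Apply Step 1 with $\rho=\mu^\theta$ and $\eta=\nu_i^\theta=\mathcal N(m_i\cdot\theta,\theta^\top S_i\theta)$. The right-hand side depends on $\mu^\theta$ only through its mean $m\cdot\theta$ and variance $\theta^\top\Sigma\theta$. These coincide with the mean and variance of $\gamma_\mu^\theta=\mathcal N(m\cdot\theta,\theta^\top\Sigma\theta)$, for which the bound is an equality. Therefore, for every $\theta$ and every $i$,
\[
W_2(\mu^\theta,\nu_i^\theta)^2\ge W_2(\gamma_\mu^\theta,\nu_i^\theta)^2 .
\]
Integrating over $\sphere$ and summing with the weights $\lambda_i$ yields $\mathcal F^\lambda(\mu)\ge\mathcal F^\lambda(\gamma_\mu)$, which proves \eqref{eq: G}.

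\textbf{Step 3: minimizers are Gaussian.} Let $\overline\mu$ be a minimizer. Then $\mathcal F^\lambda(\overline\mu)\le\mathcal F^\lambda(\gamma_{\overline\mu})$, so the integrated inequality from Step 2 is an equality. Because each integrand is pointwise nonnegative, the $i_0$ term forces
\[
W_2(\overline\mu^\theta,\nu_{i_0}^\theta)^2=W_2(\gamma_{\overline\mu}^\theta,\nu_{i_0}^\theta)^2\quad\text{for }\sigma\text{-a.e. }\theta .
\]
Here the hypothesis $\lambda_{i_0}>0$ is used. Since $S_{i_0}\in\mathbb S^d_{++}$, the variance $s^2=\theta^\top S_{i_0}\theta>0$ for every $\theta$, so the equality case of Step 1 applies. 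Hence $\overline\mu^\theta$ is Gaussian, and therefore equals $\gamma_{\overline\mu}^\theta$, for $\sigma$-a.e.\ $\theta$.

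\textbf{Step 4: pass to all directions.} Step 3 says that the characteristic functions satisfy $\hat{\overline\mu}(t\theta)=\hat\gamma_{\overline\mu}(t\theta)$ for all $t\in\mathbb R$ and $\sigma$-a.e.\ $\theta$. Both characteristic functions are continuous on $\mathbb R^d$, and a $\sigma$-full set of directions is dense in $\sphere$. So they agree everywhere, and Cramér--Wold (uniqueness of Fourier transforms) gives $\overline\mu=\gamma_{\overline\mu}$, which is Gaussian.

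I expect the main obstacle to be the rigidity in Step 1: showing that equality in the Cauchy--Schwarz step forces $F_\rho^{-1}$ to be an affine function of $\Phi^{-1}$, including the degenerate case $c=0$. The other delicate point is the upgrade from almost every direction to every direction in Step 4. The remaining steps are bookkeeping once the pointwise slice inequality is in place.
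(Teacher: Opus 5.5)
Your proposal is correct and follows essentially the same route as the paper. You replace $\mu$ by the moment-matched Gaussian $\gamma_\mu$, apply the one-dimensional Gelbrich inequality slice by slice (proved, as in the paper's lemma, via the quantile coupling and Cauchy--Schwarz), use its equality case at $i_0$ where $\theta^\top S_{i_0}\theta>0$, and conclude with continuity of characteristic functions and Cram\'er--Wold; your explicit identification $\overline\mu^\theta=\gamma_{\overline\mu}^\theta$ for $\sigma$-a.e.\ $\theta$ before the continuity step is, if anything, a slightly cleaner rendering of the paper's last step.
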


\textbf{Proof sketch.}
Replace an arbitrary candidate $\mu\in \mathcal{P}_2(\mathbb R^d)$ by the Gaussian $\gamma_\mu:=\mathcal N(m_\mu,C_\mu)$ with the same mean $m_\mu$ and covariance $C_\mu$ as $\mu$, and apply the arguments in \cite[Thms 2.3 and 2.4]{alvarez2016fixed}, using the 1D Gelbrich inequality. Then $\mathcal F^\lambda(\gamma_\mu)
\leq
\mathcal F^\lambda(\mu)$, from which one derives \eqref{eq: G}.
\\

Recent work has shown that the critical-point structure of SW objectives can be substantially richer than their set of minimizers \cite{vauthier2025towards}.
Example \ref{example: non Gaussian crit point} shows that %
Wasserstein critical points of the SW barycenter functional \eqref{eq: SW bary functional} need not
be Gaussian.

\begin{example}\label{example: non Gaussian crit point}
Let $\nu=\mathcal N(0,\Id)$, and consider
    $\mathcal F(\mu):=\frac12 SW_2^2\left(\mu,\mathcal N(0,\Id)\right)$.
There exists $r_*>0$ such that
$\mu_{r_*}=\operatorname{Unif}(r_*\mathbb S^{d-1})$ is a non-Gaussian Wasserstein critical point of $\mathcal F$ but it is not a global minimizer.
\end{example}

\textbf{Proof sketch.}
Although $\mu_r=\operatorname{Unif}(r\mathbb S^{d-1})$, $r>0$, is singular,
its 1D projections are non-atomic, so the Wasserstein
gradient formula applies. Rotational symmetry of $\mu_r$ and the
Gaussian target reduces stationarity to an affine scalar equation
in $r$, whose positive solution $r_*$ makes the gradient vanish
on the entire sphere.
Finally, since $SW_2$ is a metric and
$\mu_{r_*}\neq\mathcal N(0,\Id)$,
we have
$\mathcal F(\mu_{r_*})>0
=
\mathcal F(\mathcal N(0,\Id))$.
Hence $\mu_{r_*}$ is a critical point but not a global minimizer.\\

By Theorem~\ref{thm: gaussian-sliced-barycenters}, the search for global
SW barycenters of Gaussian targets may be restricted, without loss of
generality, to Gaussian candidates: Given Gaussian templates $\{\nu_j=\mathcal N(m_j,S_j)\}_{j=1}^N$,  $m_j\in\mathbb R^d$, $S_j\in\mathbb S_{++}^d$,  and
$\lambda\in \Delta_{N-1}$,
for Gaussian candidates $\mu=\mathcal N(m,S)$, $m\in\mathbb R^d$, $S\in\mathbb S_{++}^d$,
we define the restricted objective
\begin{align}\label{eq: SW obj for Gaussians}
   2 \, \mathcal F_G^\lambda(m,S):&=
\sum_{j=1}^N\lambda_j
SW_2\bigl(\mathcal N(m,S),\mathcal N(m_j,S_j)\bigr)^2\\
&=\sum_{j=1}^N\lambda_j\left[\tfrac{1}{d}|m-m_j|^2\notag+\int_{\sphere}
(\sqrt{\theta^\top S\theta}-\sqrt{\theta^\top S_j\theta})^2
d\sigma(\theta)\right]\notag
\end{align} 
For the second equivalent identity, we used the closed-form of the $W_2$ distance between 1D Gaussians and integrated over the sphere applying \eqref{eq: Id/d} to get the mean term.
Restricted to the space of Gaussian measures in $\mathbb R^d$ (characterized by their mean vector and covariance matrix), the functional $\mathcal F_G^\lambda$ in \eqref{eq: SW obj for Gaussians} replaces the general functional $\mathcal{F^\lambda}$ defined on the full space $\mathcal P_2(\mathbb R^d)$ by \eqref{eq: SW bary functional}.

\begin{proposition}\label{prop: gaussian bary}
Let $\nu_j=\mathcal N(m_j,S_j)$, $j\in[N]$,
where $m_j\in\mathbb R^d$, $S_j\in\mathbb S_{++}^d$,  let
$\lambda\in\Delta_{N-1}$, and 
consider the Gaussian-restricted SW barycenter functional $\mathcal{F}_G^\lambda$  in \eqref{eq: SW obj for Gaussians}.   
If $\overline{m}:=\sum_{j=1}^N\lambda_j m_j$ and 
$S\in \mathbb S_{++}^d$ is such that
   \begin{equation}\label{eq: Gaussian crit point}
   \int_{\mathbb S^{d-1}}
\left[
1-
\frac{\sum_{j=1}^N\lambda_j\sqrt{\theta^\top S_j\theta}}
{\sqrt{\theta^\top S\theta}}
\right]
\theta\theta^\top
\,d\sigma(\theta)
=0, 
\end{equation}
then $\mathcal N(\overline m,S)$ minimizes $\mathcal F_G^\lambda$ over Gaussian candidates.
Moreover, 
$\mathcal N(\overline m,S)$ is a global SW barycenter of
$\nu_1,\ldots,\nu_N$ over $\mathcal P_2(\mathbb R^d)$.
\end{proposition}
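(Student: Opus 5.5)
The plan is to split $\mathcal F_G^\lambda$ into a mean part and a covariance part, and to show that each part is minimized at the stated point by a convexity argument. Once the Gaussian-restricted minimality is established, Theorem~\ref{thm: gaussian-sliced-barycenters} will give global minimality over $\mathcal P_2(\mathbb R^d)$.

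\textbf{Mean term.} By \eqref{eq: SW obj for Gaussians}, $2\mathcal F_G^\lambda(m,S)=\frac1d\sum_j\lambda_j|m-m_j|^2+\Phi(S)$. Here
\[
\Phi(S):=\sum_j\lambda_j\int_{\sphere}\bigl(\sqrt{\theta^\top S\theta}-\sqrt{\theta^\top S_j\theta}\bigr)^2d\sigma(\theta).
\]
The first summand is a strictly convex quadratic in $m$, uniquely minimized at $\overline m=\sum_j\lambda_jm_j$. It therefore suffices to show that $S$ minimizes $\Phi$ over $\mathbb S_{++}^d$.

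\textbf{Covariance term.} Write $a_\theta(S):=\sqrt{\theta^\top S\theta}$ and $b_\theta:=\sum_j\lambda_j\sqrt{\theta^\top S_j\theta}$. Expanding the square gives
\[
\Phi(S)=\int_{\sphere}\bigl(a_\theta(S)^2-2b_\theta a_\theta(S)\bigr)d\sigma+\text{const}.
\]
The map $S\mapsto \theta^\top S\theta$ is linear. The map $S\mapsto\sqrt{\theta^\top S\theta}$ is concave on the PSD cone, since it is the composition of a concave nondecreasing function with a linear map. Because $b_\theta\ge0$, the term $-2b_\theta a_\theta(S)$ is convex. Hence $\Phi$ is convex on the convex set $\mathbb S_{++}^d$, and so is $\mathcal F_G^\lambda$.

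It remains to compute the first variation. For a symmetric direction $H$, on $\mathbb S_{++}^d$ we have $\theta^\top S\theta\ge\lambda_{\min}(S)>0$ uniformly in $\theta$. This justifies differentiation under the integral by dominated convergence, and gives
\[
D\Phi(S)[H]=\int_{\sphere}\Bigl(1-\frac{b_\theta}{a_\theta(S)}\Bigr)\theta^\top H\theta\,d\sigma(\theta)=\Bigl\langle \int_{\sphere}\Bigl(1-\frac{b_\theta}{a_\theta(S)}\Bigr)\theta\theta^\top d\sigma,\;H\Bigr\rangle_F .
\]
Condition \eqref{eq: Gaussian crit point} says exactly that this matrix vanishes, so $D\Phi(S)=0$. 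A convex differentiable function on an open convex set is minimized at any critical point. Therefore $(\overline m,S)$ minimizes $\mathcal F_G^\lambda$ over $\mathbb R^d\times\mathbb S_{++}^d$.

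\textbf{Boundary and globalization.} Two issues remain. First, the Gaussian class in \eqref{eq: G} may include degenerate covariances. By continuity of $\Phi$ on the closed PSD cone, where $a_\theta$ is continuous and bounded, degenerate $S'$ are limits of $S'+\epsilon\Id$, so they cannot have a smaller value. Second, the assumption $S_{i_0}\in\mathbb S^d_{++}$ with $\lambda_{i_0}>0$ holds because all $S_j$ are positive definite and some $\lambda_j>0$. Theorem~\ref{thm: gaussian-sliced-barycenters} then shows that the infimum over $\mathcal P_2(\mathbb R^d)$ equals the infimum over Gaussians. Since $\mathcal N(\overline m,S)$ attains the latter, it is a global SW barycenter.

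The main obstacle is the concavity and differentiability bookkeeping for $\sqrt{\theta^\top S\theta}$: establishing convexity of $\Phi$ correctly, which hinges on $b_\theta\ge0$, and justifying the interchange of derivative and spherical integral. The boundary extension is a minor continuity check.
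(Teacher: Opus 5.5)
Your proposal is correct and follows essentially the same route as the paper. Both arguments decouple the mean and covariance terms and obtain convexity of the covariance term from concavity of $S\mapsto\sqrt{\theta^\top S\theta}$; you rightly make explicit the needed sign $b_\theta\ge 0$, which the paper leaves implicit. Both then use the vanishing directional derivative with the first-order inequality for convex functions on $\mathbb S_{++}^d$, extend to degenerate covariances via $\widetilde S+\varepsilon\Id$, and lift to $\mathcal P_2(\mathbb R^d)$ with Theorem~\ref{thm: gaussian-sliced-barycenters}.
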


\textbf{Proof sketch.} From \eqref{eq: SW obj for Gaussians}, we see that the mean and covariance variables decouple, giving rise to two convex objectives: It is easy to see that the optimal mean is $ \overline m$, while differentiating the matrix part gives the condition \eqref{eq: Gaussian crit point}. 
   Finally, Theorem~\ref{thm: gaussian-sliced-barycenters} lifts Gaussian optimality to the full Wasserstein space. \\
 
For the isotropic centered Gaussian case, condition \eqref{eq: Gaussian crit point} collapses completely to a scalar equation:

\begin{corollary}
\label{cor:isotropic-gaussian-sliced-condition}
Let
$\nu_j=\mathcal N(0,\sigma_j^2\Id)$ with
$\sigma_j>0$ for
$j\in[N]$,
and let $\lambda\in\Delta_{N-1}$. Define
$\overline{\sigma}
:=
\sum_{j=1}^N\lambda_j\sigma_j$.
Then
$\mathcal N(0,\overline{\sigma}^{\,2}\Id)
$
minimizes the Gaussian-restricted SW barycenter
functional and, moreover, it is a global SW barycenter of
$\nu_1,\ldots,\nu_N$ over $\mathcal P_2(\mathbb R^d)$.
\end{corollary}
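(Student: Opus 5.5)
The plan is to apply Proposition~\ref{prop: gaussian bary} with $m_j=0$ and $S_j=\sigma_j^2\Id$, taking the candidate covariance $S=\overline{\sigma}^{\,2}\Id$. Then $\overline m=\sum_j\lambda_j m_j=0$. So it suffices to check the matrix stationarity condition \eqref{eq: Gaussian crit point}, after which both conclusions follow directly from that proposition. The first conclusion is minimality over Gaussian candidates. The second is global optimality over $\mathcal P_2(\R^d)$, which the proposition obtains via Theorem~\ref{thm: gaussian-sliced-barycenters}.

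The hypotheses of the proposition are immediate. Each $S_j=\sigma_j^2\Id\in\mathbb S^d_{++}$ because $\sigma_j>0$. Also $\overline\sigma=\sum_j\lambda_j\sigma_j\geq\min_j\sigma_j>0$, since $\lambda\in\Delta_{N-1}$. Hence $S=\overline\sigma^{\,2}\Id\in\mathbb S^d_{++}$.

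For the key computation, note that for every $\theta\in\sphere$ we have $\theta^\top S_j\theta=\sigma_j^2|\theta|^2=\sigma_j^2$, so $\sqrt{\theta^\top S_j\theta}=\sigma_j$. Likewise $\sqrt{\theta^\top S\theta}=\overline\sigma$. The bracket in \eqref{eq: Gaussian crit point} therefore equals
\[
1-\frac{\sum_j\lambda_j\sigma_j}{\overline\sigma}=0
\]
pointwise in $\theta$, and the integral vanishes identically. We do not even need \eqref{eq: Id/d} here, since the integrand is zero before integrating.

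There is no real obstacle. The only point deserving care is that $\overline\sigma>0$, so that $S$ is positive definite and the quotient is defined. As a sanity check, the same answer comes from the scalar reduction of \eqref{eq: SW obj for Gaussians}. Restricting to isotropic candidates $S=s^2\Id$, the objective becomes $\sum_j\lambda_j(s-\sigma_j)^2$, which is minimized at $s=\overline\sigma$.
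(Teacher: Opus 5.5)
Your proposal is correct and matches the paper's proof: both specialize Proposition~\ref{prop: gaussian bary} to $S_j=\sigma_j^2\Id$ and $S=\overline\sigma^{\,2}\Id$, note that the bracket in \eqref{eq: Gaussian crit point} is constant in $\theta$ and vanishes exactly when the radius equals $\sum_j\lambda_j\sigma_j$, and then invoke that proposition for both conclusions. The only difference is cosmetic: the paper solves for $\sigma$ using \eqref{eq: Id/d}, whereas you substitute $\overline\sigma$ directly and also state explicitly that $\overline\sigma>0$, which is needed for $S\in\mathbb S_{++}^d$.
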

See Appendix \ref
{app:gauss-analysis}: the analysis problem in the Gaussian case is a 
linear problem.

\section{Global Minimizers}\label{sec: global}
Inspired by  \cite{agueh2011barycenters}, we close our theoretical analysis by studying global minimizers for our SW barycenter functional $\mathcal{F}^\lambda$ given by \eqref{eq: SW bary functional}. 

\paragraph{Standing Assumptions:}
In this section, $K\subset\mathbb R^d$ is a compact convex set, the templates $\nu_1,\ldots,\nu_N$ belong to $\mathcal P(K)$, the space of probability measures supported on $K$. For $\lambda\in\Delta_{N-1}$, we restrict $\mathcal F^\lambda$ to $\mathcal P(K)$. 
Given $\mu\in\mathcal P(K)$, assume that $\mu^\theta$ is non-atomic for $\sigma$-a.e. $\theta\in\mathbb S^{d-1}$, and let $T_{j,\theta}^\mu$ be as in \eqref{eq: T_theta} with primitive $
    u_{j,\theta}^\mu(s):=\int_0^s T_{j,\theta}^\mu(r)\,dr$, and
a Kantorovich potential for the  cost $c(s,t)=\frac12|s-t|^2$ between $\mu^\theta$ and $\nu_j^\theta$
\begin{equation}\label{eq: Kant pot}
      \varphi_{j,\theta}^\mu(s)
    :=\tfrac12s^2-u_{j,\theta}^\mu(s).
\end{equation}    

\begin{theorem}
\label{thm:SW-supporting-potential}
In addition to the standing assumptions considered in this section, assume that the map
\begin{equation}\label{eq: functional in terms of kant pot}
    \Phi_\mu(x)
    :=
    \sum_{j=1}^N\lambda_j
    \int_{\mathbb S^{d-1}}
        \varphi_{j,\theta}^\mu(x\cdot\theta)
    \,d\sigma(\theta)
\end{equation}        
is well defined and continuous on $K$.
Suppose that there exists a constant $c_\mu\in\mathbb R$ such that
\begin{align}\label{eq:supporting-potential-condition}
\Phi_\mu(x)&\geq c_\mu
&\forall x\in K,
\quad \Phi_\mu(x)&=c_\mu
&\mu\text{-a.e. }x. 
\end{align}
Then $\mu$ is a global minimizer of the functional \eqref{eq: SW bary functional} on
$\mathcal P(K)$, that is,
$
    \mathcal F^\lambda(\mu)
    =
    \min_{\rho\in\mathcal P(K)}
    \mathcal F^\lambda(\rho).
$
\end{theorem}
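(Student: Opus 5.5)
We follow the Kantorovich-duality strategy of \cite{agueh2011barycenters}, applied slice by slice. The aim is a lower bound on $\mathcal F^\lambda(\rho)$, valid for every $\rho\in\mathcal P(K)$, that is attained at $\rho=\mu$.

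\emph{Step 1 (1D duality on each slice).} Fix $\theta$ outside the $\sigma$-null set where $\mu^\theta$ has atoms, and fix $j\in[N]$. The map $T_{j,\theta}^\mu$ is nondecreasing, so $u_{j,\theta}^\mu$ is convex. Hence $\varphi_{j,\theta}^\mu=\tfrac12 s^2-u_{j,\theta}^\mu$ is $c$-concave for $c(s,t)=\tfrac12|s-t|^2$. Its $c$-transform is $\psi_{j,\theta}^\mu(t)=\tfrac12t^2-(u_{j,\theta}^\mu)^*(t)$, where the Legendre transform is taken over the compact interval $P_\theta(K)$ so that everything stays bounded. The pair $(\varphi,\psi)$ then satisfies $\varphi(s)+\psi(t)\le\tfrac12|s-t|^2$ for all $s,t$. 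Since $T_{j,\theta}^\mu$ is optimal, equality holds on its graph, which gives
\[
\tfrac12W_2^2(\mu^\theta,\nu_j^\theta)=\int\varphi_{j,\theta}^\mu\,d\mu^\theta+\int\psi_{j,\theta}^\mu\,d\nu_j^\theta .
\]
For an arbitrary $\rho\in\mathcal P(K)$, integrating the inequality against any coupling of $\rho^\theta$ and $\nu_j^\theta$ gives
\[
\tfrac12W_2^2(\rho^\theta,\nu_j^\theta)\ge\int\varphi_{j,\theta}^\mu\,d\rho^\theta+\int\psi_{j,\theta}^\mu\,d\nu_j^\theta .
\]
No regularity of $\rho$ is needed for this bound.

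\emph{Step 2 (integrate over $\theta$ and $j$).} Multiply the inequality by $\lambda_j$, sum over $j$, and integrate over $\sigma$. Then apply Fubini, rewriting $\int\varphi\,d\rho^\theta$ as $\int_K\varphi(x\cdot\theta)\,d\rho(x)$. This yields
\[
\mathcal F^\lambda(\rho)\ge\int_K\Phi_\mu\,d\rho+C,\qquad C:=\sum_j\lambda_j\int_{\mathbb S^{d-1}}\int\psi_{j,\theta}^\mu\,d\nu_j^\theta\,d\sigma(\theta),
\]
and for $\rho=\mu$ the same computation gives the equality $\mathcal F^\lambda(\mu)=\int_K\Phi_\mu\,d\mu+C$. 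Now invoke \eqref{eq:supporting-potential-condition}: $\int\Phi_\mu\,d\rho\ge c_\mu$ and $\int\Phi_\mu\,d\mu=c_\mu$. Therefore
\[
\mathcal F^\lambda(\rho)\ge c_\mu+C=\mathcal F^\lambda(\mu).
\]
The constant $C$ does not depend on $\rho$, which is exactly what makes the comparison work.

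\emph{Main obstacle.} The delicate step is measurability and integrability in $\theta$, which is what justifies Fubini and the exchange of sum and integrals.
\begin{itemize}
\item Everything is supported in $K$, so $T_{j,\theta}^\mu$ can be chosen with values in $P_\theta(K)$. This bounds $u_{j,\theta}^\mu$, $\varphi_{j,\theta}^\mu$ and $\psi_{j,\theta}^\mu$ uniformly on the relevant intervals by constants depending only on $\operatorname{diam}K$ and $\max_{x\in K}|x|$.
\item Joint measurability of $(\theta,s)\mapsto T_{j,\theta}^\mu(s)$ follows from the quantile representation $T=F_{\nu_j^\theta}^{-1}\circ F_{\mu^\theta}$, which is measurable in $\theta$ because the cumulative distribution functions are.
\item The continuity hypothesis on $\Phi_\mu$ is used so that $\int\Phi_\mu\,d\rho$ is defined for every $\rho$, and so that $\Phi_\mu\ge c_\mu$ on all of $K$ rather than merely a.e.; this matters because $\rho$ may be singular.
\item The normalization $u(0)=0$ only shifts each $\varphi_{j,\theta}^\mu$ and $\psi_{j,\theta}^\mu$ by opposite constants. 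The sum $\varphi+\psi$ is unchanged, so this is harmless.
\end{itemize}
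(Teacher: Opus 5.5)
Your proposal is correct and follows essentially the same route as the paper. Both arguments apply Kantorovich duality on each slice with the potential $\varphi_{j,\theta}^\mu$ at $\mu^\theta$, test the same dual pair against the competitor $\rho^\theta$, and then sum over $j$ and integrate over $\theta$ to get $\mathcal F^\lambda(\rho)-\mathcal F^\lambda(\mu)\ge\int_K\Phi_\mu\,d(\rho-\mu)\ge0$. The differences are cosmetic: the paper subtracts the two dual identities directly instead of carrying the $\rho$-independent constant $C$, and it takes the Kantorovich-potential property of $\varphi_{j,\theta}^\mu$ from the standing assumptions, whereas you build the $c$-transform explicitly and check equality on the graph of a monotone representative.
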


\textbf{Proof sketch.}
Adapting the duality argument of
\cite[Prop.~3.8]{agueh2011barycenters}, we use the optimal
dual potentials at $\mu^\theta$ to bound the transport costs
of each projected competitor $\rho^\theta$.
Summing over templates and averaging over directions  yields
$\mathcal F^\lambda(\rho)-\mathcal F^\lambda(\mu)
\geq \int_K\Phi_\mu\,d(\rho-\mu)\geq0$ by using \eqref{eq:supporting-potential-condition}.

\begin{corollary}
\label{cor:SW-Karcher-to-barycenter}
In addition to the standing assumptions considered in this section, 
suppose that $T_{j,\theta}^{\mu}$ is continuous on $P_\theta(O)$ for some open neighborhood $O$ of $K$,
for each $j\in[N]$, $\sigma$-a.e.
$\theta\in\mathbb S^{d-1}$. 
If the sliced Karcher equation $\sum_{j=1}^N\lambda_j U_j^\mu(x)=0$ holds for every $x\in K$, then $\mu$ is a global minimizer of
$\mathcal F^\lambda$ on $\mathcal P(K)$.
\end{corollary}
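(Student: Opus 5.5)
The plan is to reduce the corollary to Theorem~\ref{thm:SW-supporting-potential}. I would show that, under continuity of the 1D maps, the sliced Karcher equation on all of $K$ forces $\Phi_\mu$ to be constant on $K$. Then \eqref{eq:supporting-potential-condition} holds trivially with $c_\mu:=\Phi_\mu(x_0)$ for any $x_0\in K$: equality holds everywhere on $K$, hence both $\mu$-a.e.\ and as an inequality.

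\textbf{Step 1: differentiate $\Phi_\mu$.} For each $j$ and a.e.\ $\theta$, the function $s\mapsto\varphi_{j,\theta}^\mu(s)=\tfrac12 s^2-u_{j,\theta}^\mu(s)$ is $C^1$ on $P_\theta(O)$, because $T_{j,\theta}^\mu$ is continuous there. Its derivative is $s-T_{j,\theta}^\mu(s)$. Hence, for $x\in O$,
\[
\nabla_x\,\varphi_{j,\theta}^\mu(x\cdot\theta)=\bigl(x\cdot\theta-T_{j,\theta}^\mu(x\cdot\theta)\bigr)\theta .
\]
I would then differentiate under the integral in \eqref{eq: functional in terms of kant pot} to get $\nabla\Phi_\mu(x)=\sum_j\lambda_jU_j^\mu(x)$ for $x$ in a neighborhood of $K$. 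For the domination needed to pass the derivative inside, I would use that $T_{j,\theta}^\mu$ is a monotone map into the convex hull of $P_\theta(K)$, which lies in $[-R,R]$ with $R=\max_K|x|$, since $\nu_j\in\mathcal P(K)$. This gives $|T_{j,\theta}^\mu|\le R$ uniformly in $\theta$. Combined with $|x\cdot\theta|\le \sup_O|x|$ (shrinking $O$ to a bounded neighborhood), the integrand's gradient is uniformly bounded. Dominated convergence then applies to the difference quotients along segments, via the mean value theorem in $s$.

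\textbf{Step 2: conclude constancy.} Since $K$ is convex, any two points $x,y\in K$ are joined by the segment $x+t(y-x)\subset K$. The fundamental theorem of calculus along this segment gives
\[
\Phi_\mu(y)-\Phi_\mu(x)=\int_0^1\Big\langle\sum_j\lambda_jU_j^\mu\bigl(x+t(y-x)\bigr),\,y-x\Big\rangle dt=0
\]
by the Karcher hypothesis. So $\Phi_\mu\equiv c_\mu$ on $K$. Continuity and well-definedness of $\Phi_\mu$ on $K$ also follow from the same bounds: $|\varphi_{j,\theta}^\mu(s)|\le \tfrac12 s^2+R|s|$. Theorem~\ref{thm:SW-supporting-potential} then yields global minimality.

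\textbf{Main obstacle.} The delicate point is Step 1, namely justifying differentiation under $\int_{\sphere}$. The continuity hypothesis holds only for $\sigma$-a.e.\ $\theta$ and is not uniform in $\theta$. In fact the segment argument only needs absolute continuity in $t$ of $t\mapsto\varphi_{j,\theta}^\mu((x+t(y-x))\cdot\theta)$ for each fixed $\theta$. That holds because $u_{j,\theta}^\mu$ is Lipschitz with constant $R$. So I would bypass pointwise differentiation altogether: write the 1D fundamental theorem of calculus for each $\theta$, integrate over $\theta$, and swap the $t$- and $\theta$-integrals by Fubini using the uniform bound. Continuity of $T_{j,\theta}^\mu$ is then used only to identify the derivative with $s-T_{j,\theta}^\mu(s)$ at every point rather than a.e. This matters because the Karcher equation is assumed only pointwise on $K$, and a segment in $K$ may be $\mu$-null.
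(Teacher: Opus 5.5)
This is essentially the paper's proof: your Step~1 is Lemma~\ref{lem:gradient-of-sliced-potential}, and your Step~2 is exactly the paper's segment-integration argument on the convex set $K$ followed by Theorem~\ref{thm:SW-supporting-potential}; Step~1 uses the same ingredients (the bound $|T_{j,\theta}^\mu|\le R_K$ from $\nu_j\in\mathcal P(K)$, $C^1$ slice potentials from continuity on $P_\theta(O)$, and dominated differentiation under $\int_{\sphere}$). One correction to your closing remark: your Fubini variant does not need continuity at all, and the segment possibly being $\mu$-null is irrelevant, because $u_{j,\theta}^\mu(b)-u_{j,\theta}^\mu(a)=\int_a^b T_{j,\theta}^\mu(r)\,dr$ holds by definition for the bounded representative. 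That identity directly gives $\Phi_\mu(y)-\Phi_\mu(x)=\int_0^1\langle\sum_j\lambda_jU_j^\mu(x+t(y-x)),y-x\rangle\,dt$, and the pointwise Karcher equation on the segment then finishes the argument.
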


\textbf{Proof sketch.}
The additional hypothesis leads to $\nabla\Phi_\mu
    =
    \sum_{j=1}^N\lambda_j U_j^\mu$
    on $O$. If it vanishes on the convex set $K$, integration along line segments shows that $\Phi_\mu$ is constant on $K$, and Theorem \ref{thm:SW-supporting-potential} gives global minimality.\\

We note that Corollary~\ref{cor:SW-Karcher-to-barycenter} does not conflict with the existence of
non-minimizing critical points, as in Example~\ref{example: non Gaussian crit point}, where, trivially, the Gaussian target is not supported in any compact set $K$, and therefore does not fall under the standing assumption of this section.

\begin{remark}
\label{rem:pointwise-vs-ae-stationarity}
The hypothesis of
Corollary~\ref{cor:SW-Karcher-to-barycenter} is strictly stronger
than ordinary Wasserstein stationarity. Indeed, the latter only gives \eqref{eq: c.p. condition as f.p.}, equivalently,
\begin{equation}\label{eq: Karcher pp}
        \sum_{j=1}^N\lambda_jU_j^\mu(x)=0
    \qquad
    \mu\text{-a.e. }x,
\end{equation}    
because the Wasserstein gradient is an element of
$L^2(\mu;\mathbb R^d)$. In contrast,  Corollary~\ref{cor:SW-Karcher-to-barycenter} assumes 
\begin{equation}\label{eq: Karcher}
        \sum_{j=1}^N\lambda_jU_j^\mu(x)=0
    \qquad
    \text{for every }x\in K.
\end{equation}    
Requiring the identity on all of $K$, rather than only
$\mu$-a.e., allows us to integrate $\nabla\Phi_\mu$ along
every line segment in $K$. Thus $\Phi_\mu$ is constant on
$K$, and the supporting-potential condition follows.
\end{remark}

\begin{remark}
The hypothesis \eqref{eq:supporting-potential-condition} is the sliced analogue of the dual optimality condition for classical Wasserstein barycenters in \cite[Prop. 3.8]{agueh2011barycenters}; see also \cite[Sec. 3.1.5]{panaretos2020invitation}. A detailed comparison is given in Remark \ref{eq: analogous to AC} in the appendix.
\end{remark}

The regularity assumption in Corollary~\ref{cor:SW-Karcher-to-barycenter}
can be replaced by the stronger, directly checkable uniform Lipschitz requirement 
for all $j\in [N]$ and $\sigma$-a.e.\ $\theta$:
\begin{equation}\label{eq: unif lip cond}
    |T_{j,\theta}^\mu(s)-T_{j,\theta}^\mu(t)|
    \leq \mathrm{L}|s-t|
    \qquad
    \forall s,t\in\mathbb R.
\end{equation}    

\begin{corollary}
\label{cor:exact-SW-analysis}
In addition to the standing assumptions considered in this section, suppose that $\mu$ is absolutely continuous with respect to the Lebesgue measure in $\mathbb R^d$, $\operatorname{supp}\mu=K$, and that the uniform Lipschitz condition \eqref{eq: unif lip cond} holds. Then, for every $\lambda\in\Delta_{N-1}$,
$$
    \mu\in\arg\min_{\rho\in\mathcal P(K)}\mathcal F^\lambda(\rho)
    \quad\Longleftrightarrow\quad
    \mathcal A(\mu)\lambda=0.
$$
Equivalently, the full set of barycentric coordinates of $\mu$ for the problem restricted to
$\mathcal P(K)$ is $\Delta_{N-1}\cap\ker\mathcal A(\mu)$.
\end{corollary}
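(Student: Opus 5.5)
The equivalence rests on one observation: since $\lambda^\top\mathcal A(\mu)\lambda=\|\sum_j\lambda_jU_j^\mu\|^2_{L^2(\mu)}$ by Lemma~\ref{lem:norm-of-grad} and $\mathcal A(\mu)$ is positive semi-definite, we have $\mathcal A(\mu)\lambda=0$ if and only if $\lambda^\top\mathcal A(\mu)\lambda=0$. Both are equivalent to
\begin{equation*}
\sum_{j=1}^N\lambda_jU_j^\mu=0\quad\mu\text{-a.e.}
\end{equation*}
This holds because, for a positive semi-definite matrix $A$, the vanishing of $\lambda^\top A\lambda=\|A^{1/2}\lambda\|^2$ forces $A^{1/2}\lambda=0$, hence $A\lambda=0$. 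The plan is then to prove each direction by reducing to this $\mu$-a.e.\ Karcher condition.

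\emph{($\Rightarrow$)} Suppose $\mu$ minimizes $\mathcal F^\lambda$ on $\mathcal P(K)$. I cannot apply Corollary~\ref{cor:direct-necessary-stationarity} verbatim, because perturbations $(\Id+tv)_\#\mu$ may leave $\mathcal P(K)$. I would instead perturb only by fields $v\in C_c^1$ supported in $\operatorname{int}K$. Since $K$ is compact and convex with $\operatorname{supp}\mu=K$ and $\mu\ll$ Lebesgue, $K$ has nonempty interior and $\mu(\partial K)=0$, as the boundary of a convex body is Lebesgue-null. For such $v$ and small $|t|$, the map $\Id+tv$ maps $K$ into $K$: it is the identity off $\operatorname{supp}v$, and $\operatorname{supp}v$ lies at positive distance from $\partial K$. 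So $\mu_t\in\mathcal P(K)$, and minimality together with Proposition~\ref{thm: gradient} gives $\int\langle\sum_j\lambda_jU_j^\mu,v\rangle\,d\mu=0$ for all such $v$. These fields are dense in $L^2(\mu|_{\operatorname{int}K})=L^2(\mu)$, so the Karcher identity holds $\mu$-a.e., and therefore $\mathcal A(\mu)\lambda=0$.

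\emph{($\Leftarrow$)} Suppose $\mathcal A(\mu)\lambda=0$, so $G:=\sum_j\lambda_jU_j^\mu=0$ $\mu$-a.e. The goal is to upgrade this to vanishing at every $x\in K$ and then invoke Corollary~\ref{cor:SW-Karcher-to-barycenter}. By \eqref{eq: unif lip cond}, each $s\mapsto s-T_{j,\theta}^\mu(s)$ is $(1+\mathrm L)$-Lipschitz uniformly in $\theta$. Hence $G$ is Lipschitz on $\mathbb R^d$, since $|G(x)-G(y)|\le(1+\mathrm L)|x-y|$ after integrating over the sphere. In particular $G$ is continuous. The zero set of $G$ is closed and has full $\mu$-measure, so it contains $\operatorname{supp}\mu=K$. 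Thus $G\equiv0$ on $K$. Continuity of $T_{j,\theta}^\mu$ on all of $\mathbb R$, hence on $P_\theta(O)$, also follows from \eqref{eq: unif lip cond}.

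The main obstacle is checking the remaining standing hypotheses of Theorem~\ref{thm:SW-supporting-potential} and Corollary~\ref{cor:SW-Karcher-to-barycenter}: that $\Phi_\mu$ is well defined and continuous on $K$. I plan to verify this with bounds on $\varphi^\mu_{j,\theta}$ uniform in $\theta$. On $P_\theta(K)\subset[-R,R]$, the map $T_{j,\theta}^\mu$ takes values in $P_\theta(K)$ wherever it is determined. By the Lipschitz bound it is globally bounded on $[-R,R]$ by a constant depending only on $R$ and $\mathrm L$. Consequently $u^\mu_{j,\theta}$, and hence $\varphi^\mu_{j,\theta}$, are uniformly bounded and uniformly Lipschitz on $[-R,R]$. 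Dominated convergence then gives continuity of $\Phi_\mu$. Measurability in $\theta$ I would take as implicit in the definition of $U_j^\mu$.

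Finally, the ``equivalently'' clause follows at once: the set of barycentric coordinates is $\{\lambda\in\Delta_{N-1}:\mathcal A(\mu)\lambda=0\}=\Delta_{N-1}\cap\ker\mathcal A(\mu)$.
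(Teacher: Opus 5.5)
Your proposal is correct and follows the paper's proof essentially step for step. For necessity, both use perturbations $(\Id+tv)_\#\mu$ with $v$ supported in $\operatorname{int}K$, then density and positive semi-definiteness of $\mathcal A(\mu)$; for sufficiency, both upgrade the $\mu$-a.e.\ Karcher identity to all of $K=\operatorname{supp}\mu$ via the $(1+\mathrm L)$-Lipschitz continuity of $\sum_j\lambda_jU_j^\mu$ (the paper's Lemma~\ref{lem:uniform-Lipschitz-slices}) and then invoke Corollary~\ref{cor:SW-Karcher-to-barycenter}. Your check that $\mu(\partial K)=0$ makes the density step more explicit than the paper does, while your separate verification that $\Phi_\mu$ is well defined and continuous is harmless but redundant, since Corollary~\ref{cor:SW-Karcher-to-barycenter} already obtains this through Lemma~\ref{lem:gradient-of-sliced-potential}.
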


\textbf{Proof sketch.}
Feasible perturbations $(\Id+tv)_\#\mu\in \mathcal{P}(K)$, for small $|t|$,
and Proposition~\ref{thm: gradient} 
give necessity.
Conversely, $\mathcal A(\mu)\lambda=0$ gives stationarity
$\mu$-a.e., and continuity and full support extend it to all
of $K$, so Corollary~\ref{cor:SW-Karcher-to-barycenter}
gives sufficiency.\\

We close this section by noticing that our supporting-potential condition \eqref{eq:supporting-potential-condition} controls $\Phi_\mu$ on $K$, and this is sufficient for competitors supported there. However,
the preceding results certify global optimality over
$\mathcal P(K)$, not over all of $\mathcal P_2(\mathbb R^d)$.
For classical $W_2$ barycenters, this distinction disappears
when all templates are supported on a common compact convex
set $K$ 
\cite[Sec.~4]{agueh2011barycenters};
see also \cite[Sec.~2]{alvarez2016fixed}.
The following %
example shows that this property fails for SW barycenters.

\begin{example}\label{prop:SW-convex-support-counterexample}
Let
$a=(\frac{\sqrt3}{2},\frac12)$,
$b=(\frac{\sqrt3}{2},-\frac12)$,
and consider the templates
$
\nu_1=\frac12(\delta_a+\delta_{-a})$,
$
\nu_2=\frac12(\delta_b+\delta_{-b})$,
with equal weights $\lambda=(1/2,1/2)$.
Their supports have convex hull
$
K=
[-\frac{\sqrt3}{2},\frac{\sqrt3}{2}]
\times[-\frac12,\frac12]$.
A SW barycenter is
$\mu_*=\frac12(\delta_{Me_1}+\delta_{-Me_1})$,
$
M=\frac1\pi+\frac{\sqrt3}{3}>\frac{\sqrt3}{2}$.
Thus both points in $\operatorname{supp}\mu_*$ lie outside $K$.
Moreover, no unrestricted SW barycenter belongs to
$\mathcal P(K)$.
\end{example}

Because SW compares measures only through their 1D projections, the independently optimal behavior of all slices need not be geometrically compatible with staying inside the convex hull of the original supports.

\section{Algorithms \& Experiments}\label{sec: alg and experiments}

Through Wasserstein first-variation, one can reinterpret the SW barycenter computation in \cite{boneel}, as a fixed-point descent scheme. 
We derive the dynamics intrinsically from the Wasserstein first-variation of $\mathcal F^\lambda(\mu)$, i.e., 
\eqref{eq: grad of sw bary} in Proposition \ref{thm: gradient}, and
interpret the \emph{synthesis} procedure as the $W_2$-gradient flow
$$
    \partial_t\mu_t+\nabla\cdot(\mu_t v_t)=0,
    \qquad
    v_t=-\nabla_{W_2}\mathcal F^\lambda(\mu_t).
$$
Thus, the explicit Euler step in Wasserstein space is
$$
    \mu^{k+1}
    =
    \left(
        \operatorname{Id}
        -
        \tau\nabla_{W_2}\mathcal F^\lambda(\mu^k)
    \right)_\#\mu^k.
$$
Using \eqref{eq: Id/d},
the choice $\tau=d$ gives the fixed-point iteration %
$\mu^{k+1}=(B_{\mu^k})_\#\mu^k$ as in \eqref{eq: ite}.
Thus, the synthesis procedure for  seeking stationary points of the SW
barycenter functional can be viewed as a measure-level $W_2$-gradient
Euler scheme for the functional \eqref{eq: SW bary functional}, whose particle
implementation is closely related to the descent schemes in
\cite{boneel}. The additional information obtained from the first-variation formulation is that Proposition \ref{prop:sliced-fixed-point-descent} justifies a descent result for the iterations: $\mathcal{F}^\lambda(\mu^{k})\geq \mathcal{F}^\lambda(\mu^{k+1})$. A detailed discussion is provided in  Appendix \ref{app:synth}.

In addition, we have a simple  \emph{diagnostic test} for assessing whether a probability measure satisfies a necessary condition for being a SW barycenter. For a fixed candidate measure $\mu$, Lemma \ref{lem:norm-of-grad}
motivates  the following test: One needs to solve problem \eqref{eq: mat cond}.
If $\mu$ is a barycenter associated with a weight vector $\lambda$ (i.e., $\mu=\nu_\lambda$ as in \eqref{eq: sw barycenter point}), and the first-order condition applies (i.e., Proposition \ref{thm: gradient} and Corollary \ref{cor:direct-necessary-stationarity} are applicable to $\mu$), then $\lambda^\top \mathcal A(\mu)\lambda = 0$. Therefore, a small value of \eqref{eq: mat cond} is a useful consistency check.
Adding this diagnostic to the discrete synthesis algorithm provides a post-hoc check of how well the synthesized measure satisfies the first-order optimality condition.

At the same time,
the  differential formulation yields the  Gram matrix criterion \eqref{eq: mat cond} in Proposition \ref{prop:matrix-characterization} for tackling the analysis problem,
which is specific to the
BCM in the present work and is not the focus of the 
barycenter construction in \cite{boneel}.

Both synthesis and analysis  rely on the vector fields 
$U_j^\mu$ in \eqref{eq: Uj}. 
From a computational viewpoint, this is tractable because it reduces to averaging 1D transport displacements over directions $\theta\in \sphere$. That is, after discretizing the sphere by considering
a finite set of directions $\Theta=\{\theta_1,\dots,\theta_L\}\subset \sphere$, 
we replace the integral on $\sphere$ by  averaging
$U_j^\mu(x)
\approx U_j^\Theta(x)
:=
\frac1L \sum_{\theta\in\Theta}
\bigl(x\cdot \theta - T_{j,\theta}^\mu(x\cdot\theta)\bigr)\theta,$
where the 1D transport maps $ T_{j,\theta}^\mu$ are obtained by solving a 1D OT problem, which amounts to sorting projected samples (monotone quantile matching $\text{MQM}$). Indeed, 
we represent a  measure $\mu$ by an empirical measure $\mu_X = \frac1m \sum_{k=1}^m \delta_{X_k}$,
$X=(X_1,\dots,X_m)\in (\R^d)^m$.
Likewise, each target $\nu_j$ is approximated by a point cloud
$Y^{(j)}=(Y^{(j)}_1,\dots,Y^{(j)}_{m_j})$, $j\in[N]$.
 Evaluating \eqref{eq: grad of sw bary} at the particles $X_k$ gives the discrete gradient field $\sum_{j=1}^N  \lambda_j U_j^\Theta(X_k)$, $k=1,\dots,m$.
For \emph{analysis}, given a query point cloud $X$,  one needs to build the matrix \eqref{eq: A}, i.e., $\mathcal A_{ij}(\mu)
\approx
\frac1m \sum_{k=1}^m
\langle U_i^\Theta(X_k), U_j^\Theta(X_k)\rangle$,
in order to  retrieve estimated coordinates $\widehat\lambda$. 
One also can retrieve a residual %
representing the magnitude of the empirical gradient $({\widehat{\lambda}^\top\mathcal{A}(\mu)\widehat{\lambda}})^{\frac12}$. See Algorithm \ref{alg: analysis}.
Assuming
comparable point-cloud sizes ($m$), 
analysis costs $O(LNm\log m+LNmd+N^2md)$ per query, plus an
$N$-variable simplex-constrained quadratic program. The Gram matrix requires $O(Nmd+N^2)$ storage.

\paragraph{Analysis experiments.}
 For small pre-defined dictionaries, we test Algorithm~\ref{alg: analysis} in two settings: recovering weights of synthetically generated barycenters and representing non-synthetic data. (See Appendix \ref{sec: synt}.)

We first synthesize measures from dictionaries inspired by the examples in \cite{boneel}, using 3 templates and 12 weight vectors (Figure \ref{fig:syn-analysis}). For queries produced by our SW
fixed-point synthesis procedure and the similar SW synthesis scheme of \citet{boneel}, mean
$\ell^1$ coordinate errors through Algorithm \ref{alg: analysis} range from $6.22\times10^{-4}$ to
$3.49\times10^{-3}$, while we observe larger discrepancies for the other barycentric
geometries ($2.17\times10^{-2}$--$1.05\times10^{-1}$).
This comparison verifies coordinate-recovery within
the SW model, illustrating its dependence on the chosen
geometry. See Figure \ref{fig:syn-analysis} and Appendix \ref{app:f-b} for detailed results. %

We next use the ModelNet-C component of PointCloud-C
\citep{ren2022benchmark}, derived from ModelNet40 \citep{wu20153d},
and the five categories used by \citet{mallery2025synthesis}:
airplane, bed, guitar, monitor, and vase. Each of ten randomized trials
uses 3 or 5 clean references per class, $L=32$ directions, and 20 test
objects per class, disjoint from the references. In particular, we ask whether
the estimated coordinates contain class information, can serve for classification, and whether they remain stable under point-cloud corruptions.
We use classification to assess the class
information carried by the coordinates. 
For comparison, we also utilize the Point Cloud MNIST dataset
\cite{garcia2021pointcloudmnist} (10 classes).

For a query point cloud $X$, summing within a class $c$ gives its
\emph{class mass},
$\widehat p_c(X)=\sum_{j\in\mathcal I_c}\widehat\lambda_j(X)$,
where $\mathcal I_c$ indexes that class's references.
We predict the class with largest mass and reject queries with the
largest class-conditional residuals 
$r_{\mathrm{class}}(X)
=\min_c\min_{\lambda\in\Delta_{N_c-1}}
(\lambda^\top{\mathcal A}_c(X)\lambda)^{\frac{1}{2}}$,
where ${\mathcal A}_c(X)$ is the Gram block for the $N_c$
templates of class $c$.
Across both datasets,
Figure~\ref{fig:pointcloud-analysis-diagnostics} shows that
classification accuracy on the retained queries improves as
high-residual queries are removed. Encoding and diagnostics average about $50$\,ms per query (detailed timings in Appendix \ref{app:classif}).
For comparison, we evaluate nearest-reference classification
using SW distance (SW 1-NN), with the same templates and retained queries.
The results suggest that the residual helps identify real-data
subsets for which SW BCM coordinates provide reliable class information,
even for data not generated by barycentric synthesis.
Appendix~\ref{app:stab} provides
an additional experiment on the stability of SW barycentric
coordinates under point-cloud corruptions.

\section{Conclusions \& Discussion}
Our main contribution is a fast and interpretable algorithm for the
SW barycentric analysis problem, complementing existing work on
SW barycenter synthesis. The first-variation formula leads to
coordinate estimation through 1D transport computations and a convex
quadratic program. The resulting simplex-constrained weights make
each reference's estimated contribution explicit, providing a practical
way to encode measures relative to a dictionary. The accompanying
residual measures stationarity mismatch and provides a diagnostic
of the resulting representation. Experiments show accurate coordinate
recovery on synthesized SW queries and useful representations of
non-synthetic point clouds at low computational cost. For the latter,
class-conditional residuals help flag potentially unreliable predictions.

\begin{algorithm}[ht!]
\caption{Analysis - SW Barycentric Coordinates}
\label{alg: analysis}
\small
\begin{algorithmic}[1]
\Require Query point cloud $X=(X_k)_{k=1}^m$, template point clouds
$\{Y^{(j)}\}_{j=1}^N$, and number of directions $L$.
\Statex \State Sample $\Theta=\{\theta_q\}_{q=1}^L\subset\sphere$.
\For{$j=1,\ldots,N$}
    \For{$k=1,\ldots,m$}
        \State $U_{j,k}\gets 0\in\R^d$
    \EndFor
    \For{$q=1,\ldots,L$}
        \State $a_k\gets X_k\cdot\theta_q$ for $k=1,\ldots,m$
        \State $b_r\gets Y_r^{(j)}\cdot\theta_q$ for
        $r=1,\ldots,m_j$
        \State $(t_k)_{k=1}^m\gets
        \operatorname{MQM}((a_k)_{k=1}^m,(b_r)_{r=1}^{m_j})$
        \For{$k=1,\ldots,m$}
            \State $U_{j,k}\gets U_{j,k}+L^{-1}(a_k-t_k)\theta_q$
        \EndFor
    \EndFor
\EndFor
\For{$i=1,\ldots,N$}
    \For{$j=i,\ldots,N$}
        \State $\widehat{\mathcal A}_{ij}\gets
        m^{-1}\sum_{k=1}^m
        \langle U_{i,k},U_{j,k}\rangle$
        \State $\widehat{\mathcal A}_{ji}\gets
        \widehat{\mathcal A}_{ij}$
    \EndFor
\EndFor
\State $\widehat\lambda\in
\displaystyle\operatorname*{arg\,min}_{\lambda\in\Delta_{N-1}}
\lambda^\top\widehat{\mathcal A} \, \lambda$
\State \Return $\left(\widehat\lambda,\,
\sqrt{\widehat\lambda^\top\widehat{\mathcal A}\, \widehat\lambda}\right) \qquad \text{ (coordinates, residual)}$
\end{algorithmic}
\end{algorithm}

\begin{figure}[ht!]
    \centering
    \includegraphics[width=1\linewidth]{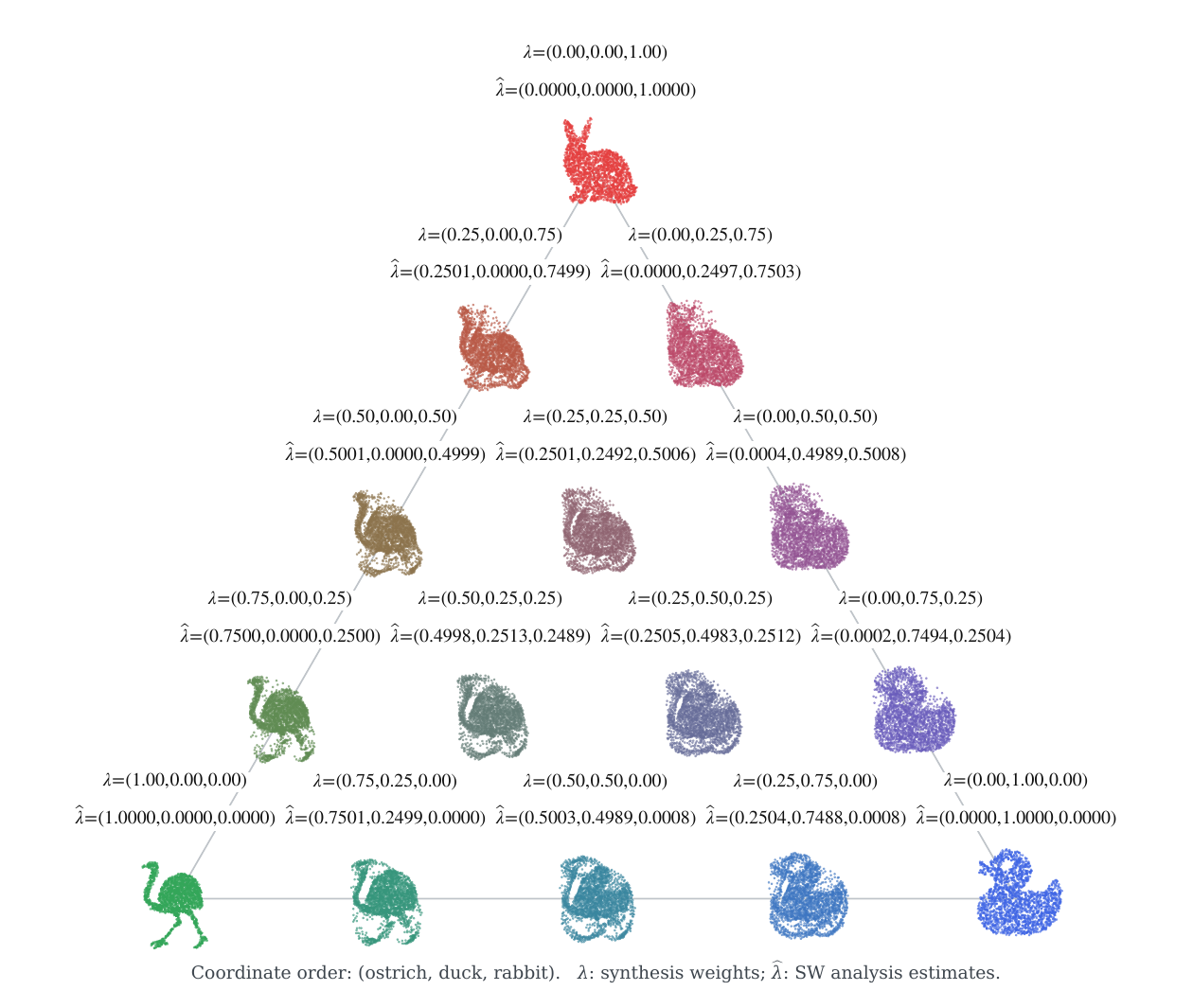}
    \caption{{\small SW BCM: vertices represent reference measures, while other grid points show SW barycenters synthesized (via fixed-point scheme) with weights $\lambda$ and coordinates $\widehat{\lambda}$ recovered by analysis (Algorithm \ref{alg: analysis}).}    
    }
    \label{fig:syn-analysis}
\end{figure}

\begin{figure}[ht!]
  \centering
  \includegraphics[width=\linewidth]{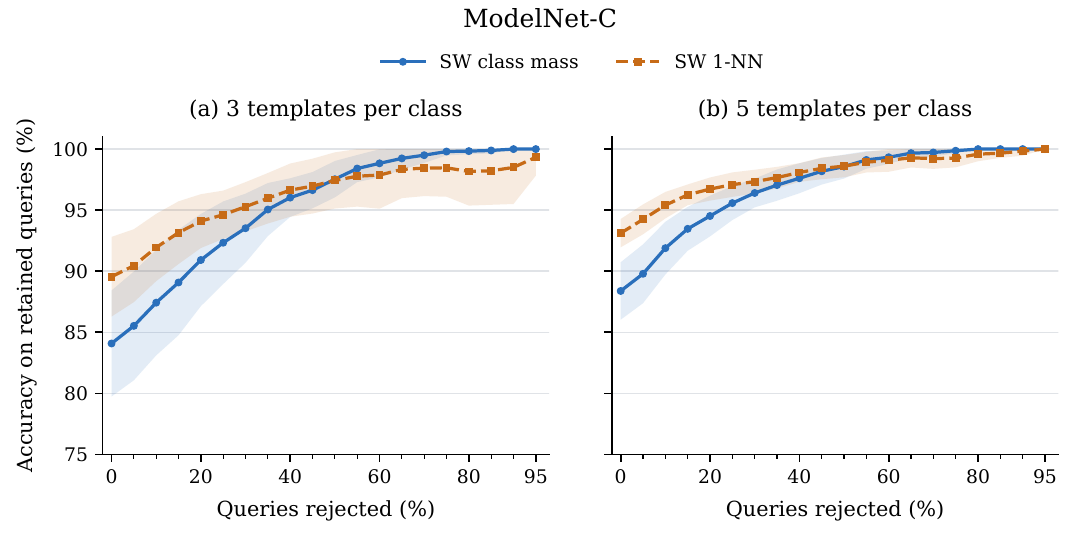}
  \includegraphics[width=\linewidth]{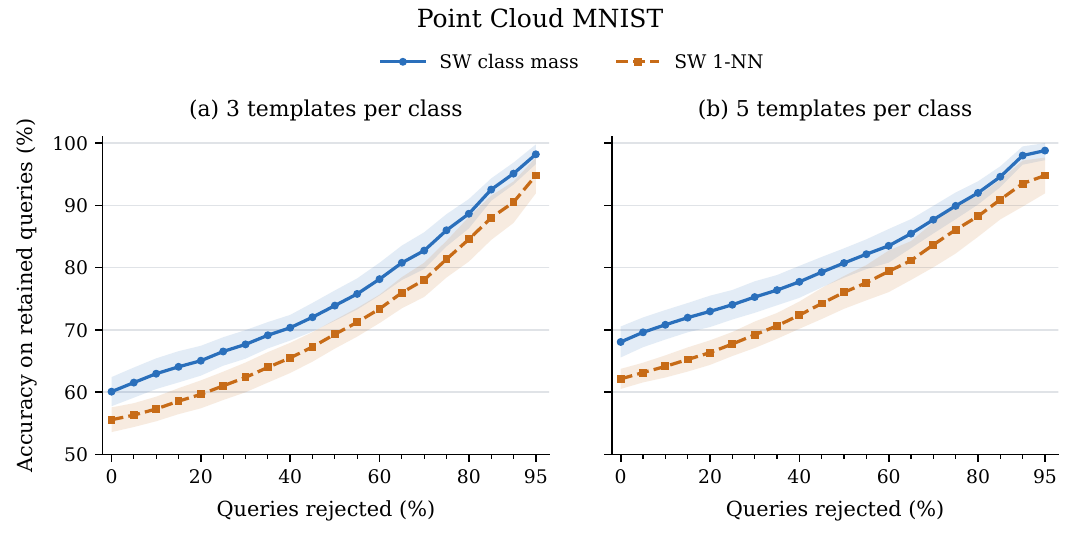}
  \caption{{\small Selective classification on ModelNet-C (top) and Point Cloud
MNIST (bottom), using nested dictionaries with three (a) or five (b)
references per class. Classifiers retain the
same queries after rejecting those with largest $r_{\mathrm{class}}$.
Curves show mean retained accuracy; shading indicates pointwise
$95\%$ Student-$t$ confidence intervals over ten ModelNet-C trials
or ten MNIST dictionary draws. More specifications in Appendix \ref{app:classif}.}}
  \label{fig:pointcloud-analysis-diagnostics}
\end{figure}

\section*{AI Use Statement}
The authors used ChatGPT to improve the clarity of the exposition, assist with literature searches, and discuss and refine mathematical arguments and proofs. In particular, substantial AI assistance was used in developing Example 2. The algorithms were implemented with significant 
assistance of Codex. The authors independently checked the mathematical statements and proofs, verified the cited sources, and reviewed all AI-assisted text and reviewed and tested the AI-assisted code.  The authors take full responsibility for the final content of this work.

\section*{Acknowledgments}
RDM was supported in part by NSF DMS-2603773; and 
JMM was supported in part by
DMS-2318894 and DMS-2309519.

\clearpage
\newpage
\section*{Appendices}
These appendices contain the complete proofs of all theoretical results stated in the main paper, together with the detailed calculation for Example~\ref{example: non Gaussian crit point}.

\section{Proofs of Main Results}\label{app:proofs}

\subsection{Critical Points of the SW Barycenter Functional in Wasserstein Space}

As a technicality, in \eqref{eq: T_theta} we choose the measurable quantile representative, defined $\mu^\theta$-a.e. by
\begin{equation}\label{eq: 1d OT formula}
    T_{j,\theta}^\mu(s)=Q_{\nu_j^\theta}\bigl(F_{\mu^\theta}(s)\bigr),
\end{equation}
where $F_{\mu^\theta}$ is the cumulative distribution function $F_{\mu^\theta}(s):=\mu^\theta((-\infty,s])$, $s\in \mathbb R$, and $Q_{\nu_j^\theta}$ is the quantile function or inverse cumulative distribution function $Q_{\nu_j^\theta}(q):= \inf\{x\in \mathbb R: \,  F_{\nu_j^\theta}(x)\geq q\}$ for $q\in [0,1]$. On the $\mu^\theta$-null set where the endpoint extended values $Q_{\nu_j^\theta}(0)$ or $Q_{\nu_j^\theta}(1)$ may be infinite, we assign the value $0$. This gives a finite jointly measurable representative $(\theta,s)\mapsto T_{j,\theta}^\mu(s)$ without changing the transport map $\mu^\theta$-a.e.

\begin{proof}[Proof of Proposition~\ref{thm: gradient}]
Since the first-variation is linear, it is enough to fix $j\in [N]$, set
\begin{equation*}
    \mathcal F_j(\mu):=\tfrac{1}{2}\,\SW(\mu,\nu_j)^2,
\end{equation*}
compute the Wasserstein gradient of $\mathcal F_j$, and then sum with the coefficients $\lambda_j$. By the definition of the SW distance,
\begin{equation*}
    \mathcal F_j(\mu)=\int_{\sphere} \mathcal J_{j,\theta}(\mu)\,d\sigma(\theta),
    \qquad
    \mathcal{J}_{j,\theta}(\mu):=\tfrac{1}{2}\,\W\bigl(\nu_j^\theta,\mu^\theta\bigr)^2.
\end{equation*}
The proof is divided into three claims: First we project velocities, then we compute the one-dimensional gradient, and finally we justify differentiating under the spherical integral.

\begin{claim}[Projection of velocities]\label{claim:v-theta}
Let $(\mu_t)_{|t|<\varepsilon}\subset \Ptwo(\R^d)$ be an absolutely continuous curve with velocity field $v_t$ which satisfies the continuity equation \eqref{eq: cont eq in Rd} in $\mathbb R^d$
in the sense of distributions. For each $\theta\in \sphere$, define the projected curve $(\mu_t^\theta)_{|t|<\varepsilon}\subset \Ptwo(\R)$ by
$\mu_t^\theta := (P_\theta)_\# \mu_t$. Then $(\mu_t^\theta)_{|t|<\varepsilon}$ satisfies the continuity equation in 1D
\begin{equation}\label{eq: cont eq in R}
    \begin{cases}
        \partial_t \mu_t^\theta + \partial_s(\mu_t^\theta v_t^\theta)=0,\\
        \mu_0^\theta=\mu^\theta,
    \end{cases}
\end{equation}
for the scalar velocity field $v_t^\theta\in  L^2(\R, d\mu_t^\theta)$ characterized by
\begin{equation}\label{eq: v theta}
\int_{\R^d} \psi(x\cdot\theta)\,(v_t(x)\cdot\theta)\, d\mu_t(x)
=
\int_{\R} \psi(s)\, v_t^\theta(s)\, d\mu_t^\theta(s)    
\end{equation}
for every bounded measurable $\psi:\R\to\R$, or equivalently, given by the conditional expectation
$
    v_t^\theta(s)
    :=
    \mathbb{E}\bigl[v_t(X)\cdot\theta \,\big|\, X\cdot\theta=s\bigr]$ with
     $X\sim \mu_t.
$
In particular, at $t=0$, the differential of the projection map $\Phi_\theta(\mu):=\mu^\theta$ is given by $\left[D\Phi_\theta(\mu)\right](v_0)=v_0^\theta$.
Moreover, if $g\in L^2(\mu^\theta)$, then the adjoint satisfies
\begin{align}
\int_{\R^d}\left\langle  \left(\left[D\Phi_\theta(\mu)\right]^*g\right)(x),v_0(x)\right\rangle\,  d\mu(x)&=\int_{\R} g(s) \, v_0^\theta(s) \, d\mu^\theta(s)\notag\\
&=\int_{\R^d} g(x\cdot\theta) \, v_0(x)\cdot\theta \, d\mu(x)\notag \\
&=\int_{\R^d} \left\langle g(x\cdot \theta)\, \theta,v_0(x)\right\rangle \, d\mu(x). \label{eq: adj}
\end{align}
\end{claim}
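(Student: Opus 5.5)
We must prove Claim \ref{claim:v-theta}. The plan is to test the $d$-dimensional continuity equation \eqref{eq: cont eq in Rd} against functions of the form $\phi(t)\,\psi(x\cdot\theta)$ with $\psi\in C_c^\infty(\R)$, $\phi\in C_c^\infty(-\varepsilon,\varepsilon)$. Since $\nabla_x[\psi(x\cdot\theta)]=\psi'(x\cdot\theta)\theta$, the distributional equation gives
\begin{equation*}
\int\!\phi'(t)\int_{\R^d}\psi(x\cdot\theta)\,d\mu_t(x)\,dt+\int\!\phi(t)\int_{\R^d}\psi'(x\cdot\theta)\,(v_t(x)\cdot\theta)\,d\mu_t(x)\,dt=0 .
\end{equation*}
This test function is not compactly supported in $\R^d$. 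I would therefore first multiply by a cutoff $\chi_R(x)=\chi(x/R)$ and let $R\to\infty$. The error terms are controlled by $\int|\nabla\chi_R|\,|v_t|\,d\mu_t\le C R^{-1}\|v_t\|_{L^2(\mu_t)}$. These vanish after time integration, because $\int\|v_t\|_{L^2(\mu_t)}dt<\infty$ for absolutely continuous curves \cite[Thm.~8.3.1]{ambrosio2005gradient}.

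Next I would rewrite both terms on the line through the pushforward $\mu_t^\theta=(P_\theta)_\#\mu_t$. The first term becomes $\int\psi\,d\mu_t^\theta$ directly by the change of variables formula. For the second, I define $v_t^\theta$ as the Radon–Nikodym derivative of the signed measure $(P_\theta)_\#\bigl((v_t\cdot\theta)\mu_t\bigr)$ with respect to $\mu_t^\theta$. This measure is absolutely continuous, since $\mu_t^\theta(A)=0$ implies $\mu_t(P_\theta^{-1}A)=0$. This is exactly the disintegration or conditional expectation $\mathbb E[v_t(X)\cdot\theta\mid X\cdot\theta=s]$, and it satisfies \eqref{eq: v theta} for all bounded measurable $\psi$. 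Jensen's inequality for conditional expectations, together with $|v_t\cdot\theta|\le|v_t|$, gives $\|v_t^\theta\|_{L^2(\mu_t^\theta)}\le\|v_t\|_{L^2(\mu_t)}$, so $v_t^\theta\in L^2(\mu_t^\theta)$. Substituting into the displayed identity yields \eqref{eq: cont eq in R} in the distributional sense. The initial condition $\mu_0^\theta=\mu^\theta$ is immediate.

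For the differential statement, I would use the curve $\mu_t=(\Id+tv)_\#\mu$, whose velocity at $t=0$ is $v_0=v$. The 1D curve $\mu_t^\theta$ then has velocity $v_0^\theta$ at $t=0$ by the first part, and this is what $[D\Phi_\theta(\mu)](v_0)=v_0^\theta$ means. The adjoint identity \eqref{eq: adj} is then a chain of equalities. The first is the definition of the adjoint with respect to the $L^2$ pairings. The second is \eqref{eq: v theta} with $\psi=g$; this extends from bounded $g$ to $g\in L^2(\mu^\theta)$ by truncation $g_n=(g\wedge n)\vee(-n)$ and dominated convergence, using Cauchy–Schwarz with $v_0\cdot\theta\in L^2(\mu)$. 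The third equality is the algebraic rewriting $g(x\cdot\theta)\,v_0\cdot\theta=\langle g(x\cdot\theta)\theta,v_0\rangle$. This shows $[D\Phi_\theta(\mu)]^*g=g(P_\theta(\cdot))\,\theta$.

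The main obstacle is the cutoff and integrability justification in the first step, namely passing from compactly supported test functions in $\R^d$ to test functions constant along hyperplanes. I would handle it with the $L^2$ (hence $L^1_{\mathrm{loc}}$ in time) bound on $v_t$ described above. The remaining steps are measure-theoretic bookkeeping.
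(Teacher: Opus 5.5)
Your proposal is correct and follows the paper's route. Like the paper, you test the $d$-dimensional continuity equation against $\psi(x\cdot\theta)$, pass to the line via the pushforward and the defining identity \eqref{eq: v theta}, and read off the adjoint from that same identity. Your additions fill in details the paper's two-line computation leaves implicit: the cutoff $\chi_R$ justifying the non-compactly supported test function, the Radon--Nikodym construction of $v_t^\theta$ with the conditional Jensen bound (used by the paper only later, in Claim~\ref{claim:deriv-int}), and the truncation extending \eqref{eq: v theta} to $g\in L^2(\mu^\theta)$.
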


\begin{claim}[The one-dimensional gradient]\label{claim:1d-grad}
For $\sigma$-a.e. $\theta$ such that $\mu^\theta$ is non-atomic, the 1D Wasserstein gradient at $\mu^\theta$ of
$
    G_{j,\theta}(\rho):=\tfrac{1}{2}\,\W(\nu_j^\theta,\rho)^2
$
is given by
$
\nabla_{W_2} G_{j,\theta}(\mu^\theta)(s)=s-T_{j,\theta}^\mu(s)$,  $\mu^\theta\text{-a.e. }s\in \R.
$
Moreover, if the curve $(\mu_t^\theta)$ has velocity $v_t^\theta$, then
$
\left.\frac{d}{dt}\right|_{t=0} G_{j,\theta}(\mu_t^\theta)
=
\int_{\R}
\bigl(s-T_{j,\theta}^\mu(s)\bigr)\, v_0^\theta(s)\, d\mu^\theta(s).
$
\end{claim}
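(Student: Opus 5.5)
The plan is to obtain Claim~\ref{claim:1d-grad} from the explicit one-dimensional structure: in $\mathcal P_2(\R)$, $W_2$ is an $L^2(0,1)$ distance between quantile functions. For $\sigma$-a.e.\ $\theta$, $\rho:=\mu^\theta$ is non-atomic, so $F_\rho$ pushes $\rho$ to $\mathrm{Unif}[0,1]$. Hence $T:=T_{j,\theta}^\mu=Q_{\nu_j^\theta}\circ F_\rho$ is the monotone optimal map, and
\[
G_{j,\theta}(\rho)=\tfrac12\int_\R |s-T(s)|^2\,d\rho(s).
\]
For the derivative along a curve, I would first treat $\rho_t=(\Id+t w)_\#\rho$ with $w\in C_c^1(\R)$, and then pass to general velocities $v_t^\theta$.

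\textbf{Upper bound.} The coupling $((\Id+tw)\times T)_\#\rho$ is admissible between $\rho_t$ and $\nu_j^\theta$. Therefore
\[
G_{j,\theta}(\rho_t)\le \tfrac12\int |s+tw(s)-T(s)|^2\,d\rho
= G_{j,\theta}(\rho)+t\int (s-T)w\,d\rho+O(t^2).
\]
This gives $\limsup_{t\downarrow 0}$ of the difference quotient $\le \int(s-T)w\,d\rho$, and the reverse inequality for $\liminf_{t\uparrow0}$.

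\textbf{Lower bound.} I would use Kantorovich duality with the potential $\varphi(s)=\frac12 s^2-u(s)$ from \eqref{eq: Kant pot}, where $u$ is convex with $u'=T$, together with its conjugate pair $(\varphi,\varphi^c)$. Then
\[
G_{j,\theta}(\rho_t)\ge \int\varphi\,d\rho_t+\int\varphi^c\,d\nu_j^\theta,
\]
with equality at $t=0$. Differentiating $\int\varphi(s+tw(s))\,d\rho$ gives $\int\varphi'(s)w\,d\rho=\int (s-T)w\,d\rho$. This step needs $\varphi$ to be differentiable $\rho$-a.e. That holds because $u$ is convex, so it is differentiable off a countable set, which is $\rho$-null since $\rho$ is non-atomic. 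Dominated convergence also needs local Lipschitz bounds on $\varphi$ on the compact set $\operatorname{supp}w$. Together with the upper bound, this gives two-sided differentiability.

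\textbf{General velocities.} For a general curve with velocity $v_t^\theta$, I would rather use the standard superdifferentiability estimate of $\frac12W_2^2(\cdot,\nu)$, as in \cite[Thm.~10.2.2 / Prop.~10.3.2]{ambrosio2005gradient}. In 1D, since $\rho$ is non-atomic, its subdifferential is the single element $s-T(s)$. The chain rule along absolutely continuous curves \cite[Sec.~10.1]{ambrosio2005gradient} then yields the stated derivative formula with $v_0^\theta$ in place of $w$.

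\textbf{Tangency.} The vector field $s\mapsto s-T(s)$ belongs to the tangent space of $\mathcal P_2(\R)$ at $\rho$. In 1D, the tangent space is all of $L^2(\rho)$ modulo the closure of gradients, and every $L^2$ function of one variable is a limit of derivatives $\psi'$ of $C_c^\infty$ functions. It is therefore the Wasserstein gradient.

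\textbf{Main obstacle.} The hardest step is the lower bound for curves that are not of push-forward form, where $v_0^\theta$ is only an $L^2$ conditional expectation. I would first handle smooth $v$ via the push-forward computation. I would then rely on the continuity equation \eqref{eq: cont eq in R} of Claim~\ref{claim:v-theta} and the Ambrosio--Gigli--Savaré chain rule, rather than proving that case by hand.
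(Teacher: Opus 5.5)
\textbf{Gap in the push-forward lower bound.} The lower bound via Kantorovich duality does not hold in the generality of the claim. It works when $T=T_{j,\theta}^\mu$ is bounded, e.g.\ under the compactness assumptions of Section~\ref{sec: global}. Now suppose $b:=\sup\operatorname{supp}\mu^\theta<\infty$ while $\nu_j^\theta$ is unbounded above. Then $T(s)\uparrow+\infty$ as $s\uparrow b$. Take any convex $u$ with $T(s)\in\partial u(s)$ for $\mu^\theta$-a.e.\ $s$. It lies above supporting lines of arbitrarily large slope at points $s_0\uparrow b$, so $u\equiv+\infty$ on $(b,\infty)$. Consequently $\varphi=-\infty$ there, and $\varphi$ is not Lipschitz near $b$. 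For any $w$ with $w(b)>0$ and $t>0$, the measure $\rho_t$ charges $(b,\infty)$, so $\int\varphi\,d\rho_t=-\infty$ and the dual bound is vacuous. This is exactly the situation of Example~\ref{example: non Gaussian crit point}: the projections of $\operatorname{Unif}(r\sphere)$ live on $[-r,r]$, while the target projections are $\mathcal N(0,1)$.

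\textbf{Fix.} You do not need duality at all. For $|t|\,\|w'\|_\infty<1$ the map $\Id+tw$ is strictly increasing, so your coupling $((\Id+tw)\times T)_\#\rho$ is monotone and hence optimal. Equivalently, $Q_{\rho_t}=(\Id+tw)\circ Q_\rho$ and $Q_{\nu_j^\theta}=T\circ Q_\rho$. Your upper bound is then an identity:
$G_{j,\theta}(\rho_t)=G_{j,\theta}(\rho)+t\int(s-T)w\,d\rho+\tfrac{t^2}{2}\int w^2\,d\rho$.
Together with your tangency remark, this proves the first sentence of the claim without citation, for the push-forward perturbations that define the gradient in Section~\ref{sec: critical points}.

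\textbf{The second sentence.} This computation does not cover the curves the claim is actually used for, even when $v$ is smooth. The projection of $(\Id+tv)_\#\mu$ is in general not of the form $(\Id+tw)_\#\mu^\theta$, because its velocity $v_0^\theta$ is only a conditional expectation. That part of your plan therefore rests entirely on the Ambrosio--Gigli--Savar\'e derivative formula for $\tfrac12W_2^2(\cdot,\nu_j^\theta)$ along absolutely continuous curves, with the optimal plan induced by $T$. That citation is the paper's entire proof, so here you coincide with the paper.

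Both arguments share one caveat: that formula is stated for a.e.\ $t$, not at a prescribed $t=0$. If you want $t=0$ for $\mu_t=(\Id+tv)_\#\mu$, use the 1D quantile isometry. Write $Q_t$ for the quantile function of $\mu_t^\theta$. Then exactly
$G_{j,\theta}(\mu_t^\theta)-G_{j,\theta}(\mu^\theta)=\langle Q_t-Q_0,(\Id-T)\circ Q_0\rangle_{L^2(0,1)}+\tfrac12\|Q_t-Q_0\|_{L^2(0,1)}^2$,
with $\|Q_t-Q_0\|_{L^2(0,1)}\le|t|\,\|v\|_{L^2(\mu;\R^d)}$. Test the difference quotients against $\psi'\circ Q_0$, $\psi\in C_c^\infty(\R)$, and use density of such $\psi'$ in $L^2(\mu^\theta)$. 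This shows $\langle (Q_t-Q_0)/t,\,f\circ Q_0\rangle\to\int f\,v_0^\theta\,d\mu^\theta$ for every $f\in L^2(\mu^\theta)$, which is all that is needed.
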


\begin{claim}[Differentiation under the integration on the sphere]\label{claim:deriv-int}
One can differentiate under the integral sign in the definition of $\mathcal F_j$: for absolutely continuous curves $\mu_t$ this holds for a.e. time $t$, and for the perturbations $\mu_t=(\Id+t v)_\#\mu$ with $v\in C_c^1(\R^d;\R^d)$ it holds at $t=0$.
\end{claim}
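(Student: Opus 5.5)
We prove Claim~\ref{claim:deriv-int} by dominated convergence applied to the difference quotients of $t\mapsto \mathcal J_{j,\theta}(\mu_t)$. The key tool is a uniform Lipschitz-type bound in $t$, which comes from the 1D triangle inequality.

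\textbf{Step 1: the perturbation $\mu_t=(\Id+tv)_\#\mu$.} Here $(P_\theta\circ(\Id+tv),P_\theta)_\#\mu$ couples $\mu_t^\theta$ and $\mu^\theta$. Hence
\begin{equation*}
W_2(\mu_t^\theta,\mu^\theta)\le |t|\,\|v\cdot\theta\|_{L^2(\mu)}\le |t|\,\|v\|_\infty .
\end{equation*}
Write $a_t:=W_2(\nu_j^\theta,\mu_t^\theta)$. Then
\begin{equation*}
|a_t^2-a_0^2|=|a_t-a_0|\,(a_t+a_0)\le |t|\,\|v\|_\infty\bigl(2a_0+|t|\,\|v\|_\infty\bigr).
\end{equation*}
Moreover,
\begin{equation*}
a_0\le \Bigl(\int|x|^2d\mu\Bigr)^{1/2}+\Bigl(\int|y|^2d\nu_j\Bigr)^{1/2}=:C,
\end{equation*}
uniformly in $\theta$. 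So the difference quotients $t^{-1}\bigl(\mathcal J_{j,\theta}(\mu_t)-\mathcal J_{j,\theta}(\mu)\bigr)$ are bounded by a constant independent of $\theta$ for $|t|\le1$. Since $\sigma$ is a probability measure, this constant is $\sigma$-integrable.

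For $\sigma$-a.e. $\theta$, Claims~\ref{claim:v-theta} and \ref{claim:1d-grad} give the pointwise limit of these quotients:
\begin{equation*}
\int_{\R}\bigl(s-T_{j,\theta}^\mu(s)\bigr)v_0^\theta(s)\,d\mu^\theta(s)=\int_{\R^d}\bigl(x\cdot\theta-T_{j,\theta}^\mu(x\cdot\theta)\bigr)\,v(x)\cdot\theta\,d\mu(x).
\end{equation*}
Dominated convergence then yields differentiation under $\int_{\sphere}$ at $t=0$. Fubini, which is legitimate by the joint measurability of the quantile representative \eqref{eq: 1d OT formula} and the bound $\int\!\!\int |x\cdot\theta-T_{j,\theta}^\mu(x\cdot\theta)|^2\,d\mu\,d\sigma\le 2C^2$, rewrites the result as $\int\langle U_j^\mu,v\rangle\,d\mu$.

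\textbf{Step 2: general absolutely continuous curves.} By Claim~\ref{claim:v-theta} and Jensen's inequality, $\|v_t^\theta\|_{L^2(\mu_t^\theta)}\le\|v_t\|_{L^2(\mu_t)}$, and $\|v_t\|_{L^2(\mu_t)}\in L^1_{loc}$. The projected curves are therefore absolutely continuous with $|\dot\mu^\theta_t|\le \|v_t\|_{L^2(\mu_t)}$, uniformly in $\theta$. Consequently $t\mapsto\mathcal J_{j,\theta}(\mu_t)$ is absolutely continuous, with
\begin{equation*}
\Bigl|\tfrac{d}{dt}\mathcal J_{j,\theta}(\mu_t)\Bigr|\le a_t\,\|v_t\|_{L^2(\mu_t)}\le C'\,\|v_t\|_{L^2(\mu_t)},
\end{equation*}
where $C'$ uses uniform second-moment bounds along the curve. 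Write $\mathcal J_{j,\theta}(\mu_t)-\mathcal J_{j,\theta}(\mu_s)=\int_s^t\frac{d}{dr}\mathcal J_{j,\theta}(\mu_r)\,dr$, integrate over $\theta$, and swap the integrals by Fubini using this integrable majorant. Then $\mathcal F_j(\mu_t)$ is absolutely continuous, and its a.e. derivative equals $\int_{\sphere}\frac{d}{dt}\mathcal J_{j,\theta}(\mu_t)\,d\sigma$ by Lebesgue differentiation.

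\textbf{Main obstacle.} The delicate point is the exceptional sets in Step 2. The times at which $t\mapsto\mathcal J_{j,\theta}(\mu_t)$ is differentiable depend on $\theta$. To handle this, I would apply Fubini to the jointly measurable set of pairs $(t,\theta)$ where differentiability fails; this set has $dt\otimes\sigma$ measure zero. For a.e. $t$ its $\theta$-section is then $\sigma$-null, which is what is needed.

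At $t=0$ in Step 1, no such issue arises: the non-atomic hypothesis on $\mu^\theta$ holds $\sigma$-a.e. by assumption, and this ensures that the pointwise limit from Claim~\ref{claim:1d-grad} exists for $\sigma$-a.e. $\theta$.
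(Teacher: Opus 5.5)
Your proposal is correct and follows essentially the paper's argument: a $\theta$-independent bound (from $W_2(\mu_t^\theta,\nu_j^\theta)\le W_2(\mu_t,\nu_j)$-type estimates and the Jensen bound $\|v_t^\theta\|_{L^2(\mu_t^\theta)}\le\|v_t\|_{L^2(\mu_t)}$), followed by Fubini and dominated convergence. Your direct difference-quotient bound at $t=0$, obtained from the coupling $(P_\theta\circ(\Id+tv),P_\theta)_\#\mu$, and your explicit handling of the $(t,\theta)$ exceptional set spell out steps that the paper compresses into ``the same estimate holds in a small interval around $0$.''
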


We now combine the claims. Fix $\theta\in\sphere$ and write the functional $\mathcal J_{j,\theta}$ as the composition
\begin{align*}
    \mathcal{P}_2(\R^d)&\longrightarrow\mathcal{P}_2(\R)\longrightarrow \R\\
    \quad \mu&\longmapsto \,\mu^\theta\,\longmapsto \frac{1}{2}\,\W(\nu_j^\theta,\mu^\theta)^2.
\end{align*}
Formally by the chain rule,
\begin{equation}\label{eq: chain rule}
    \nabla_{W_2}\mathcal{J}_{j,\theta}(\mu)=\left[D\Phi_\theta(\mu)\right]^*\left[\nabla_{W_2}G_{j,\theta}(\mu^\theta)\right].
\end{equation}
This formal expression is justified by Claims \ref{claim:v-theta} and \ref{claim:1d-grad}. Indeed, using the test function $g(s):=\nabla_{W_2} G_{j,\theta}(\mu^\theta)(s)=s-T_{j,\theta}^\mu(s)$
in \eqref{eq: adj}, we obtain
\begin{align*}
    \frac{d}{dt}\Big|_{t=0} \mathcal{J}_{j,\theta}(\mu_t)
    &=
    \int_{\R} \bigl(s-T_{j,\theta}^\mu(s)\bigr)\,v_0^\theta(s)\,d\mu^\theta(s)=\int_{\R^d} \left\langle\left(x\cdot \theta -T_{j,\theta}^\mu(x\cdot \theta)\right) \theta, v_0(x)\right\rangle \, d\mu(x).
\end{align*}
Essentially, the above calculation can be viewed as pulling the 1D derivative from Claim \ref{claim:1d-grad} back to $\R^d$, getting
\begin{equation}\label{eq: grad J}
    \nabla_{W_2}\mathcal{J}_{j,\theta}(\mu)(x)=\left(x\cdot \theta -T_{j,\theta}^\mu(x\cdot \theta)\right)\theta.
\end{equation}
Before integrating in $\theta$, let us justify that the vector field \eqref{eq: grad J} is well defined in
$L^2(\mu; \R^d)$. For fixed $j$, define
$$
F_j(x,\theta):=\bigl(x\cdot\theta-T_{j,\theta}^\mu(x\cdot\theta)\bigr)\theta,
\qquad (x,\theta)\in \R^d\times \sphere.
$$
By the quantile representative chosen in the standing assumption, $(x,\theta)\mapsto T_{j,\theta}^\mu(x\cdot\theta)$ is measurable. Hence $(x,\theta)\mapsto F_j(x,\theta)$ is measurable, and $\|F_j(x,\theta)\|^2
=
|x\cdot\theta-T_{j,\theta}^\mu(x\cdot\theta)|^2$.
Hence, by Tonelli's theorem,
\begin{align*}
\int_{\R^d}\int_{\sphere}\|F_j(x,\theta)\|^2\,d\sigma(\theta)\,d\mu(x)
&=
\int_{\sphere}\int_{\R^d}|x\cdot\theta-T_{j,\theta}^\mu(x\cdot\theta)|^2\,d\mu(x)\,d\sigma(\theta)\\
&=
\int_{\sphere} W_2(\mu^\theta,\nu_j^\theta)^2\,d\sigma(\theta)\\
&=
SW_2(\mu,\nu_j)^2<\infty.
\end{align*}
Therefore $F_j:\R^d\times \sphere\to\R^d$ is square integrable with respect to the product measure $\mu\otimes\sigma$. In particular, for $\mu$-a.e. $x$, the map
$\theta\mapsto F_j(x,\theta)$ is Bochner integrable in $\R^d$, and
$$
U_j^\mu(x)=\int_{\sphere}F_j(x,\theta)\,d\sigma(\theta)
$$
defines an element of $L^2(\mu;\R^d)$, because Jensen's inequality gives
$$
\int_{\R^d}|U_j^\mu(x)|^2\,d\mu(x)
\leq
\int_{\R^d}\int_{\sphere}\|F_j(x,\theta)\|^2\,d\sigma(\theta)\,d\mu(x)<\infty.
$$

Finally, using Claim \ref{claim:deriv-int} and Fubini's theorem,
\begin{align*}
\frac{d}{dt}\Big|_{t=0} \mathcal{F}_j(\mu_t)
&=
\int_{\sphere}
\int_{\R^d}
\left\langle
\bigl(x\cdot\theta - T_{j,\theta}^\mu(x\cdot\theta)\bigr)\theta,\,
 v_0(x)
\right\rangle
\, d\mu(x)\, d\sigma(\theta)\\
&=
\int_{\R^d}
\left\langle
\int_{\sphere}
\bigl(x\cdot\theta - T_{j,\theta}^\mu(x\cdot\theta)\bigr)\theta
\, d\sigma(\theta),\,
 v_0(x)
\right\rangle
\, d\mu(x).
\end{align*}
Therefore, by the definition of Wasserstein gradient on $\Ptwo(\R^d)$,
$
\nabla_{W_2} \mathcal{F}_j(\mu)(x)
=
\int_{\sphere}
\bigl(x\cdot\theta - T_{j,\theta}^\mu(x\cdot\theta)\bigr)\theta
\, d\sigma(\theta).
$
Equivalently, since $\sigma$ is normalized and uniform,
\begin{equation*}
    \nabla_{W_2} \mathcal{F}_j(\mu)(x)=\frac{1}{d}\, x-\int_{\sphere}
\bigl(T_{j,\theta}^\mu\circ P_\theta (x)\bigr)\theta
\, d\sigma(\theta),
\end{equation*}
since we have the identity \eqref{eq: Id/d}.
The weighted sum over $j$ proves the formula \eqref{eq: grad of sw bary}, that is, since $\mathcal{F}^\lambda(\mu)=\sum_{j=1}^N  \lambda_j \mathcal{F}_j(\mu)$,
we conclude by linearity that
$$
\nabla_{W_2} \mathcal{F}^\lambda(\mu)(x)
=
\sum_{j=1}^N  \lambda_j
\int_{\sphere}
\bigl(x\cdot\theta - T_{j,\theta}^\mu(x\cdot\theta)\bigr)\theta
\, d\sigma(\theta).
$$
Equivalently, since $\sigma$ is normalized and $\sum_j\lambda_j=1$, $\nabla_{W_2} \mathcal{F}^\lambda(\mu)(x)=\frac{1}{d} \, x-\sum_{j=1}^N  \lambda_j
\int_{\sphere}
\bigl(T_{j,\theta}^\mu\circ P_\theta(x)\bigr)\theta
\, d\sigma(\theta).$

The a.e.-in-time formula for absolutely continuous curves follows by applying the same computation at every time where the Wasserstein chain rule holds and where the projected measures are non-atomic.
\end{proof}

\begin{proof}[Proof of Claim~\ref{claim:v-theta}]
We want to show that the projected curve $(\mu_t^\theta)$, with velocity $v_t^\theta$ defined by \eqref{eq: v theta}, satisfies \eqref{eq: cont eq in R} in the sense of distributions. Let $\psi\in C_c^\infty(\R)$ be a test function. Then
\begin{align*}
    \frac{d}{dt}\int_\R \psi(s) \, d\mu_t^\theta(s)
    &=\frac{d}{dt}\int_{\R^d} \psi(x\cdot\theta) \, d\mu_t(x)\\
    &=\int_{\R^d}\psi'(x\cdot\theta)\, \theta\cdot v_t(x) \, d\mu_t(x) \qquad (\text{by } \eqref{eq: cont eq in Rd})\\
    &=\int_{\R}\psi'(s)\, v_t^\theta(s) \, d\mu_t^\theta(s) \qquad (\text{by } \eqref{eq: v theta}).
\end{align*}
This is exactly the weak form of the 1D continuity equation. The adjoint identity follows directly from the defining property \eqref{eq: v theta}.
\end{proof}

\begin{proof}[Proof of Claim~\ref{claim:1d-grad}]
Since $\mu^\theta$ is non-atomic, the one-dimensional monotone rearrangement gives a unique monotone optimal transport map $T_{j,\theta}^\mu$ from $\mu^\theta$ to $\nu_j^\theta$. The derivative formula for $\frac{1}{2} W_2^2$ along absolutely continuous Wasserstein curves gives
$$
\frac{d}{dt}\Big|_{t=0}\frac{1}{2} W_2(\mu_t^\theta,\nu_j^\theta)^2
=
\int_{\R}\bigl(s-T_{j,\theta}^\mu(s)\bigr)v_0^\theta(s)\,d\mu^\theta(s),
$$
because the optimal plan from $\mu^\theta$ to $\nu_j^\theta$ is induced by the map $T_{j,\theta}^\mu$. Thus the 1D Wasserstein gradient of $G_{j,\theta}$ at $\mu^\theta$ is the scalar vector field $\mathrm{Id}-T_{j,\theta}^\mu:\R\to\R$.
\end{proof}

\begin{proof}[Proof of Claim~\ref{claim:deriv-int}]
Consider
$$
\mathcal{F}_j(\mu_t)
=
\int_{\sphere} \frac{1}{2}\, \W(\nu_j^\theta,\mu_t^\theta)^2\, d\sigma(\theta), \qquad 
f_t(\theta):=\frac{1}{2}\, \W(\nu_j^\theta,\mu_t^\theta)^2.
$$
For $\sigma$-a.e. $\theta$, the map $t\mapsto f_t(\theta)$ is absolutely continuous along any absolutely continuous Wasserstein curve, and for a.e. $t$ its derivative is provided by Claim \ref{claim:1d-grad}. By Cauchy--Schwarz,
\begin{align*}
|\partial_t f_t(\theta)|
&=
\left|
\int_{\R}
\bigl(s-T^t_{j,\theta}(s)\bigr)\, v_t^\theta(s)\, d\mu_t^\theta(s)
\right| \le
\left(
\int_{\R} |s-T^{\mu_t}_{j,\theta}(s)|^2\, d\mu_t^\theta(s)
\right)^{1/2}
\left(
\int_{\R} |v_t^\theta(s)|^2\, d\mu_t^\theta(s)
\right)^{1/2}.
\end{align*}
For the first factor, 
$$
\left(
\int_{\R} |s-T^{\mu_t}_{j,\theta}(s)|^2\, d\mu_t^\theta(s)
\right)^{1/2}
=
\W(\mu_t^\theta,\nu_j^\theta)\le \W(\mu_t,\nu_j),
$$
since $P_\theta$ is $1$-Lipschitz. For the second factor, Jensen's inequality for conditional expectation gives
\begin{align*}
\int_{\R} |v_t^\theta(s)|^2\, d\mu_t^\theta(s)
&=
\int_{\R}
\left|
\mathbb{E}\bigl[v_t(X)\cdot\theta \,\big|\, X\cdot\theta=s\bigr]
\right|^2
\, d\mu_t^\theta(s) \\
&\le
\int_{\R^d} |v_t(x)\cdot\theta|^2\, d\mu_t(x)
\le
\int_{\R^d} |v_t(x)|^2\, d\mu_t(x).
\end{align*}
Therefore, $|\partial_t f_t(\theta)|
\le
\W(\mu_t,\nu_j)\,\|v_t\|_{L^2(\mu_t)}$. 
On every compact time interval, $t\mapsto \W(\mu_t,\nu_j)$ is bounded, and $t\mapsto \|v_t\|_{L^2(\mu_t)}$ is integrable for an absolutely continuous Wasserstein curve. The RHS is independent of $\theta$, and it is integrable in time and on $\sphere$. Hence Fubini's theorem and the dominated convergence theorem justify differentiating under the integral sign, giving for a.e. $t$,
$$
\frac{d}{dt}\mathcal{F}_j(\mu_t)
=
\int_{\sphere}
\frac{d}{dt}\left[\frac{1}{2}\W(\nu_j^\theta,\mu_t^\theta)^2\right]
\, d\sigma(\theta).
$$
For the special perturbation $\mu_t=(\Id+t v)_\#\mu$ with $v\in C_c^1(\R^d;\R^d)$, the same estimate holds in a small interval around $0$, so the same argument gives the derivative at $t=0$.
\end{proof}

\begin{proof}[Proof of Corollary~\ref{cor:direct-necessary-stationarity}]
For any $v\in C_c^1(\mathbb R^d;\mathbb R^d)$, define the perturbation
$
\mu_t=(\operatorname{Id}+tv)_\#\bar\mu$.
Then
$W_2(\mu_t,\bar\mu)
\leq
|t|\|v\|_{L^2(\bar\mu;\mathbb R^d)}$,
so $\mu_t$ remains in any prescribed $W_2$-neighborhood of
$\bar\mu$ for $|t|$ small enough. Since $\bar\mu$ is a local minimizer,
the map $t\mapsto \mathcal F^\lambda(\mu_t)$
has a local minimum at $t=0$. Hence its derivative at $t=0$ is $0$.
By Proposition~\ref{thm: gradient},
$$
0
=
\frac{d}{dt}\Big|_{t=0}\mathcal F^\lambda(\mu_t)
=
\int_{\mathbb R^d}
\left\langle \sum_{j=1}^N\lambda_jU_j^{\bar\mu}(x),v(x)\right\rangle\,d\bar\mu(x).
$$
Since this holds for every $v\in C_c^1(\mathbb R^d;\mathbb R^d)$ and
$C_c^1(\mathbb R^d;\mathbb R^d)$ is dense in
$L^2(\bar\mu;\mathbb R^d)$, while
$\sum_{j=1}^N\lambda_jU_j^{\bar\mu}\in L^2(\bar\mu;\mathbb R^d)$, we conclude that $\sum_{j=1}^N\lambda_jU_j^{\bar\mu}=0$ in $L^2(\bar\mu;\mathbb R^d)$.
\end{proof}

\subsection{The Analysis Problem}

\begin{proof}[Proof of Lemma~\ref{lem:norm-of-grad}]
By Proposition \ref{thm: gradient},
$
\nabla_{W_2}\mathcal{F}^\lambda(\mu)=\sum_{j=1}^N\lambda_j U_j^\mu$.
Therefore, by the Gram-matrix identity,
\begin{align*}
\|\nabla_{W_2}\mathcal{F}^\lambda(\mu)\|_{L^2(\mu;\mathbb R^d)}^2
&=\left\langle \sum_{i=1}^N\lambda_i U_i^\mu,\sum_{j=1}^N\lambda_j U_j^\mu\right\rangle_{L^2(\mu;\mathbb R^d)}=\sum_{i,j=1}^N\lambda_i\lambda_j\langle U_i^\mu,U_j^\mu\rangle_{L^2(\mu;\mathbb R^d)}
=\lambda^\top\mathcal A(\mu)\lambda,
\end{align*}
and $\mathcal{A}(\mu)$ is a positive semi-definite matrix in $\mathbb R^{N\times N}$.
\end{proof}

\begin{proof}[Proof of Proposition~\ref{prop:matrix-characterization}]
By using Lemma \ref{lem:norm-of-grad}, $\lambda^\top\mathcal A(\mu)\lambda=0$ if and only if
$\nabla_{W_2}\mathcal F^\lambda(\mu)=0$ in $L^2(\mu;\mathbb R^d)$. Since
$\Delta_{N-1} $ is compact and $\lambda\mapsto
\lambda^\top\mathcal A(\mu)\lambda$ is continuous, the minimum over the
simplex is attained. Therefore, the minimum is zero if and only if there
exists a simplex weight vector $\lambda^*$ for which
$\nabla_{W_2}\mathcal F^{\lambda^*}(\mu)=0$.
\end{proof}

\subsection{Fixed-Point Approach}

\begin{proof}[Proof of Proposition~\ref{prop:sliced-fixed-point-descent}]
First, we check that $B_\mu$ is well defined in $L^2(\mu;\R^d)$. 
Let 
$\overline T_\theta^\mu(x):=\sum_{j=1}^N\lambda_jT_{j,\theta}^\mu(x\cdot\theta)$.
By Jensen's
inequality,
\begin{align*}
\int_{\R^d}\int_{\sphere}
\left|\overline T_\theta^\mu(x)\right|^2\,d\sigma(\theta)\,d\mu(x)
&=
\int_{\R^d}\int_{\sphere}
\left|
\sum_{j=1}^N\lambda_jT_{j,\theta}^\mu(x\cdot\theta)
\right|^2
d\sigma(\theta)\,d\mu(x)\\
&\le
\sum_{j=1}^N\lambda_j
\int_{\sphere}\int_{\R^d}
\left|T_{j,\theta}^\mu(x\cdot\theta)\right|^2
\,d\mu(x)\,d\sigma(\theta)\\
&=
\sum_{j=1}^N\lambda_j
\int_{\sphere}\int_{\R}
|s|^2\,d\nu_j^\theta(s)\,d\sigma(\theta)\\
&=
\frac1d
\sum_{j=1}^N\lambda_j
\int_{\R^d}|y|^2\,d\nu_j(y)
<\infty .
\end{align*}
Therefore, again by Jensen's inequality,
$$
\int_{\R^d}|B_\mu(x)|^2\,d\mu(x)
\le
d^2
\int_{\R^d}\int_{\sphere}
\left|\overline T_\theta^\mu(x)\right|^2
\,d\sigma(\theta)\,d\mu(x)
<\infty .
$$

For fixed $x\in\R^d$ and $\theta\in\sphere$, 
the elementary variance identity gives
\begin{equation}
\label{eq:one-dimensional-variance}
\sum_{j=1}^N\lambda_j
|x\cdot\theta-T_{j,\theta}^\mu(x\cdot\theta)|^2
=
|x\cdot\theta-\overline T_\theta^\mu(x)|^2
+
\sum_{j=1}^N\lambda_j
|\overline T_\theta^\mu(x)-T_{j,\theta}^\mu(x\cdot\theta)|^2 .
\end{equation}
Since $T_{j,\theta}^\mu$ is the optimal map from $\mu^\theta$ to
$\nu_j^\theta$, integrating \eqref{eq:one-dimensional-variance} yields
\begin{align}
2\mathcal F^\lambda(\mu)
&=
\sum_{j=1}^N\lambda_j
\int_{\sphere}\int_{\R^d}
|x\cdot\theta-T_{j,\theta}^\mu(x\cdot\theta)|^2
\,d\mu(x)\,d\sigma(\theta) \notag\\
&=
\int_{\sphere}\int_{\R^d}
|x\cdot\theta-\overline T_\theta^\mu(x)|^2
\,d\mu(x)\,d\sigma(\theta) 
+
\sum_{j=1}^N\lambda_j
\int_{\sphere}\int_{\R^d}
|\overline T_\theta^\mu(x)-T_{j,\theta}^\mu(x\cdot\theta)|^2
\,d\mu(x)\,d\sigma(\theta).
\label{eq:F-mu-decomp-1}
\end{align}

We now decompose the first term. Recall that, since $\sigma$ is the normalized uniform
measure on $\sphere$, we have \eqref{eq: Id/d}. Let us denote by $L^2(\sphere,\sigma)$ the space of scalar functions over the sphere that are square integrable with respect to $\sigma$. 
For $\mu$-a.e. $x$, set
$
f_x(\theta):=\overline T_\theta^\mu(x)$ and
$\mathcal L:=
\{\theta\mapsto z\cdot\theta:z\in\mathbb R^d\}
\subset L^2(\sphere,\sigma).
$
Then $\mathcal L$ is a finite-dimensional closed subspace, and the
function
$
\theta\mapsto B_\mu(x)\cdot\theta
$
is the $L^2(\sphere,\sigma)$-orthogonal projection of $f_x$ onto
$\mathcal L$. Indeed, for every $z\in\mathbb R^d$,
\begin{align*}
\int_{\sphere}
\left(
f_x(\theta)-B_\mu(x)\cdot\theta
\right)
(z\cdot\theta)\,d\sigma(\theta)
&=
z\cdot
\int_{\sphere}f_x(\theta)\theta\,d\sigma(\theta)-
z\cdot
\left(
\int_{\sphere}\theta\theta^\top\,d\sigma(\theta)
\right)B_\mu(x)\\
&=
z\cdot\frac{B_\mu(x)}{d}
-
z\cdot\frac{B_\mu(x)}{d}
=0.
\end{align*}
Since $\sphere\ni\theta\mapsto x\cdot\theta$ is also a linear function,
Pythagoras gives
\begin{align}
\int_{\sphere}
|x\cdot\theta-\overline T_\theta^\mu(x)|^2
\,d\sigma(\theta)
&=
\int_{\sphere}
|(x-B_\mu(x))\cdot\theta|^2
\,d\sigma(\theta) 
+
\int_{\sphere}
|B_\mu(x)\cdot\theta-\overline T_\theta^\mu(x)|^2
\,d\sigma(\theta) \notag\\
&=
\frac1d |x-B_\mu(x)|^2
+
\int_{\sphere}
|B_\mu(x)\cdot\theta-\overline T_\theta^\mu(x)|^2
\,d\sigma(\theta).
\label{eq:projection-pythagoras}
\end{align}
Substituting \eqref{eq:projection-pythagoras} into
\eqref{eq:F-mu-decomp-1}, we obtain
\begin{align}
2\mathcal F^\lambda(\mu)
&=
\frac1d
\int_{\R^d}|x-B_\mu(x)|^2\,d\mu(x) \notag\\
&\qquad
+
\int_{\sphere}\int_{\R^d}
|B_\mu(x)\cdot\theta-\overline T_\theta^\mu(x)|^2
\,d\mu(x)\,d\sigma(\theta) \notag\\
&\qquad
+
\sum_{j=1}^N\lambda_j
\int_{\sphere}\int_{\R^d}
|\overline T_\theta^\mu(x)-T_{j,\theta}^\mu(x\cdot\theta)|^2
\,d\mu(x)\,d\sigma(\theta).
\label{eq:F-mu-decomp-2}
\end{align}

It remains to compare the last two terms with $2\mathcal F^\lambda(\mu^+)$.
Fix $j\in[N]$ and $\theta\in\sphere$. Since $\mu^+=(B_\mu)_\#\mu$, the
random variable $B_\mu(x)\cdot\theta$, with $x$ sampled according to $\mu$ (we write $x\sim\mu$), has law
$(\mu^+)^\theta$. Also, since $T_{j,\theta}^\mu$ pushes $\mu^\theta$ forward
to $\nu_j^\theta$, the random variable
$T_{j,\theta}^\mu(x\cdot\theta)$ has law $\nu_j^\theta$. Hence,
$
\left(B_\mu(\cdot)\cdot\theta,\,
T_{j,\theta}^\mu((\cdot)\,\cdot\theta)\right)_\#\mu
$
is an admissible coupling between $(\mu^+)^\theta$ and $\nu_j^\theta$.
Therefore,
\begin{equation}
\label{eq:coupling-bound}
W_2\bigl((\mu^+)^\theta,\nu_j^\theta\bigr)^2
\le
\int_{\R^d}
|B_\mu(x)\cdot\theta-T_{j,\theta}^\mu(x\cdot\theta)|^2
\,d\mu(x).
\end{equation}
Multiplying \eqref{eq:coupling-bound} by $\lambda_j$, summing over $j$, and
integrating over $\sphere$, we get
\begin{align}
2\mathcal F^\lambda(\mu^+)
&=
\sum_{j=1}^N\lambda_j
\int_{\sphere}
W_2\bigl((\mu^+)^\theta,\nu_j^\theta\bigr)^2
\,d\sigma(\theta) \notag\\
&\le
\sum_{j=1}^N\lambda_j
\int_{\sphere}\int_{\R^d}
|B_\mu(x)\cdot\theta-T_{j,\theta}^\mu(x\cdot\theta)|^2
\,d\mu(x)\,d\sigma(\theta).
\label{eq:F-mu-plus-bound-1}
\end{align}
Using the variance identity \eqref{eq:one-dimensional-variance} again, now with $B_\mu(x)\cdot\theta$ in place of
$x\cdot\theta$, we have
$$
\sum_{j=1}^N\lambda_j
|B_\mu(x)\cdot\theta-T_{j,\theta}^\mu(x\cdot\theta)|^2
=
|B_\mu(x)\cdot\theta-\overline T_\theta^\mu(x)|^2
+
\sum_{j=1}^N\lambda_j
|\overline T_\theta^\mu(x)-T_{j,\theta}^\mu(x\cdot\theta)|^2 .
$$
Thus \eqref{eq:F-mu-plus-bound-1} implies
\begin{align}
2\mathcal F^\lambda(\mu^+)
&\le
\int_{\sphere}\int_{\R^d}
|B_\mu(x)\cdot\theta-\overline T_\theta^\mu(x)|^2
\,d\mu(x)\,d\sigma(\theta) \notag\\
&\quad+
\sum_{j=1}^N\lambda_j
\int_{\sphere}\int_{\R^d}
|\overline T_\theta^\mu(x)-T_{j,\theta}^\mu(x\cdot\theta)|^2
\,d\mu(x)\,d\sigma(\theta).
\label{eq:F-mu-plus-bound-2}
\end{align}
Combining \eqref{eq:F-mu-decomp-2} and \eqref{eq:F-mu-plus-bound-2} gives
the first inequality in \eqref{eq:sliced-descent-strong}. 
Since $(\Id,B_\mu)_\#\mu$ is a coupling of $\mu$ and $\mu^+$,
$
W_2(\mu,\mu^+)^2
\le
\int_{\R^d}|x-B_\mu(x)|^2\,d\mu(x),
$
and therefore we have the second inequality in
\eqref{eq:sliced-descent-strong}.

Finally, if $B_\mu(x)\neq x$ on a set of positive $\mu$-measure, then $\int_{\R^d}|x-B_\mu(x)|^2\,d\mu(x)>0$,
and the stronger estimate \eqref{eq:sliced-descent-strong} gives
$\mathcal F^\lambda(\mu^+)<\mathcal F^\lambda(\mu)$. Thus the decrease is strict unless $B_\mu(x)=x$ for $\mu$-a.e. $x$.
\end{proof}

\begin{remark}Assume that the iteration $\mu_{n+1}:=(B_{\mu_n})_\#\mu_n$
is well defined for every $n\geq0$. Then,
Proposition~\ref{prop:sliced-fixed-point-descent} implies that
$\mathcal F^\lambda(\mu_n)$ is nonincreasing. Moreover,
$
\sum_{n=0}^{\infty}
\int_{\mathbb R^d}
|B_{\mu_n}(x)-x|^2\,d\mu_n(x)
\leq
2d\,\mathcal F^\lambda(\mu_0),
$
and, in particular,
$$
\int_{\mathbb R^d}
|B_{\mu_n}(x)-x|^2\,d\mu_n(x)
\longrightarrow0.
$$
In addition, since
$
\nabla_{W_2}\mathcal F^\lambda(\mu_n)(x)
=
\tfrac1d\bigl(x-B_{\mu_n}(x)\bigr),
$
the preceding estimate is equivalently written as
$
\sum_{n=0}^{\infty}
\left\|
\nabla_{W_2}\mathcal F^\lambda(\mu_n)
\right\|_{L^2(\mu_n;\mathbb R^d)}^2
\leq
\frac{2}{d}\mathcal F^\lambda(\mu_0).
$
Consequently,
$$
\left\|
\nabla_{W_2}\mathcal F^\lambda(\mu_n)
\right\|_{L^2(\mu_n;\mathbb R^d)}
\longrightarrow0.
$$
Thus, the iteration is \emph{asymptotically stationary}.
Furthermore, since
$(\Id,B_{\mu_n})_\#\mu_n$ is a coupling of
$\mu_n$ and $\mu_{n+1}$,
$
W_2(\mu_n,\mu_{n+1})^2
\leq
\int_{\mathbb R^d}
|B_{\mu_n}(x)-x|^2\,d\mu_n(x).
$
Hence, $\sum_{n=0}^{\infty}W_2(\mu_n,\mu_{n+1})^2<\infty$ and
\begin{equation}\label{eq: asymp}
W_2(\mu_n,\mu_{n+1})\longrightarrow0.
\end{equation}
\end{remark}

\begin{remark}
The continuity and compactness strategy developed for the classical
Wasserstein fixed-point iteration in
\cite{alvarez2016fixed} admits a sliced analogue:
\begin{enumerate}
    \item On the one hand, the map $\mu\mapsto (B_\mu)_\#\mu$, defined for probability measures $\mu\in\mathcal P_2(\mathbb R^d)$ such that
$\mu^\theta$  is non-atomic for 
$\sigma$-a.e. $\theta$, is $W_2$-continuous.
For its proof in the present setting, the stability of the full-dimensional optimal
transport maps used in \cite{alvarez2016fixed} needs to be replaced by the
explicit representation of the one-dimensional monotone transport maps
in terms of cumulative distribution and quantile functions.
Indeed, convergence in $W_2$ implies convergence of the projected cumulative
distribution functions, which yields convergence of the corresponding
transported variables for $\sigma$-a.e. direction $\theta$. Start by assuming $W_2(\mu_n,\mu)\to0$ as $n\to \infty$. Then, since $P_\theta$ is $1$-Lipschitz,
$$
W_2(\mu_n^\theta,\mu^\theta)
\leq
W_2(\mu_n,\mu)
\longrightarrow0.
$$
Thus, for $\sigma$-a.e. direction $\theta$, the projected measures
$\mu_n^\theta$ converge weakly to $\mu^\theta$. Since
$\mu^\theta$ is non-atomic, its cumulative distribution function is
continuous, and Pólya's theorem yields uniform convergence of the
corresponding cumulative distribution functions.

More precisely, using the gluing lemma, one may choose
$\mathbb R^d$-valued random variables
$X,X_1,X_2,\ldots$ on a common probability space such that $X_n\sim\mu_n$,
$X\sim\mu$,
\begin{align*}
\mathbb E|X_n-X|^2
&=W_2(\mu_n,\mu)^2\longrightarrow0.
\end{align*}
The representation of the one-dimensional monotone transport maps in
terms of cumulative distribution and quantile functions then implies
that, for every $j\in[N]$ and for $\sigma$-a.e. $\theta$,
$$
T_{j,\theta}^{\mu_n}(X_n\cdot\theta)
\longrightarrow
T_{j,\theta}^{\mu}(X\cdot\theta)
\qquad\text{in probability}.
$$
Note that the transported random variables $T_{j,\theta}^{\mu_n}(X_n\cdot\theta)$, $T_{j,\theta}^{\mu}(X\cdot\theta)$ have the same law $\nu_j^\theta$. Their squared
differences are therefore uniformly integrable, and the preceding
convergence is upgraded to
$$
T_{j,\theta}^{\mu_n}(X_n\cdot\theta)
\longrightarrow
T_{j,\theta}^{\mu}(X\cdot\theta)
\qquad\text{in }L^2.
$$
Finally, integrating over the sphere and applying Jensen's inequality
gives
\begin{align*}
&W_2^2\bigl((B_{\mu_n})_\#\mu_n,(B_\mu)_\#\mu\bigr)\leq \mathbb E|B_{\mu_n}(X_n)-B_\mu(X)|^2
\longrightarrow0.
\end{align*}

\item If the iteration $(\mu_n)_{n\geq 0}$ given by \eqref{eq: ite} is well defined and each iterate is such that $\mu_n\in\mathcal P_2(\mathbb R^d)$ with non-atomic projection
$\mu_n^\theta$, 
$\sigma$-a.e. $\theta$, then the sequence of iterates has uniformly integrable
second moments and is therefore relatively compact in
$(\mathcal P_2(\mathbb R^d),W_2)$. 
The main observation is that, for $X_n\sim\mu_n$, one has
$T_{j,\theta}^{\mu_n}(X_n\cdot\theta)
\sim\nu_j^\theta$.
Thus, although the transport maps depend on $n$, the laws of the
transported projected variables do not. Using this fact together with
Jensen's inequality and a standard uniform-integrability argument based
on the finite second moments of
$\mu_0,\nu_1,\ldots,\nu_N$, one obtains uniform integrability of the
second moments of the iterates. Consequently, the sequence
$(\mu_n)_{n\geq0}$ is relatively compact in
$(\mathcal P_2(\mathbb R^d),W_2)$.
Combined with the asymptotic regularity \eqref{eq: asymp},
the continuity of the map $\mu\mapsto (B_\mu)_\#\mu$ implies that every
$W_2$-cluster point (with non-atomic $\theta$-projection for $\sigma$-a.e. $\theta$), is a fixed
point of $\mu\mapsto (B_\mu)_\#\mu$.
\end{enumerate}

\end{remark}

\subsection{The Gaussian Case}

\begin{proof}[Proof of Theorem~\ref{thm: gaussian-sliced-barycenters}]
Let $\mu\in\mathcal P_2(\mathbb R^d)$ be arbitrary, with mean and covariance denoted by $m_\mu$ and $C_\mu$, i.e.,
$$
    m_\mu:=\int_{\mathbb R^d}x\,d\mu(x),
    \qquad
    C_\mu:=\int_{\mathbb R^d}(x-m_\mu)(x-m_\mu)^\top\,d\mu(x).
$$
Let $\gamma_\mu:=\mathcal N(m_\mu,C_\mu)$
be the Gaussian measure with the same mean and covariance as $\mu$.

For each direction $\theta\in\mathbb S^{d-1}$, the projected measure
$(P_\theta)_\#\mu$ has mean $m_\mu\cdot\theta$ and variance $\theta^\top C_\mu\theta$.
Similarly,
$
    (P_\theta)_\#\nu_i
    =
    \mathcal N(m_i\cdot\theta,\theta^\top S_i\theta)$ for $i=1,\dots, N$, and
    $(P_\theta)_\#\gamma_\mu
    =
    \mathcal N(m_\mu\cdot\theta,\theta^\top C_\mu\theta).
$

Following the strategy in \cite{alvarez2016fixed}, we now use the one-dimensional Gelbrich inequality (see Lemma \ref{lem: Gelbrich}): If $\eta\in\mathcal P_2(\mathbb R)$
has mean $a$ and variance $\alpha^2$, and if
$\zeta=\mathcal N(b,\beta^2)$, then
$
    W_2^2(\eta,\zeta)
    \geq
    (a-b)^2+(\alpha-\beta)^2.
$
In our case,
\begin{equation}\label{eq: Gelbrich}
    W_2^2\bigl((P_\theta)_\#\mu,(P_\theta)_\#\nu_i\bigr)
    \geq
    \bigl((m_\mu-m_i)\cdot\theta\bigr)^2 +
    \left(
        \sqrt{\theta^\top C_\mu\theta}
        -
        \sqrt{\theta^\top S_i\theta}
    \right)^2.
\end{equation}
But the RHS in \eqref{eq: Gelbrich} is exactly the squared Wasserstein distance between the
one-dimensional Gaussian projections
    $(P_\theta)_\#\gamma_\mu
    =
    \mathcal N(m_\mu\cdot\theta,\theta^\top C_\mu\theta)$, 
    $(P_\theta)_\#\nu_i
    =
    \mathcal N(m_i\cdot\theta,\theta^\top S_i\theta)$.
Therefore,
$
    W_2^2\bigl((P_\theta)_\#\mu,(P_\theta)_\#\nu_i\bigr)
    \geq
    W_2^2\bigl((P_\theta)_\#\gamma_\mu,(P_\theta)_\#\nu_i\bigr)$ for every $i\in [N].
$
Integrating over $\theta\in\mathbb S^{d-1}$ gives
    $SW_2^2(\mu,\nu_i)
    \geq
    SW_2^2(\gamma_\mu,\nu_i)$ for every $i\in [N]$.
Multiplying by $\lambda_i$, summing over $i$, and dividing by $2$, we get
$    \mathcal F(\mu)
    \geq
    \mathcal F(\gamma_\mu)$.

In particular, let $\bar\mu$ be a minimizer of $\mathcal F^\lambda$. Applying the previous inequality
to $\mu=\bar\mu$ gives
\begin{equation}\label{eq: <=}
        \mathcal F^\lambda(\bar\mu)
    \geq
    \mathcal F^\lambda(\gamma_{\bar\mu}).
\end{equation}
Since $\bar\mu$ is a minimizer and $\gamma_{\bar\mu}$ is an admissible competitor,
we also have
\begin{equation}\label{eq: >=}
    \mathcal F^\lambda(\bar\mu)
    \leq
    \mathcal F^\lambda(\gamma_{\bar\mu}).
\end{equation}
Hence, \eqref{eq: <=} and \eqref{eq: >=} yield
$
    \mathcal F^\lambda(\bar\mu)
    =
    \mathcal F^\lambda(\gamma_{\bar\mu}).
$

Thus equality must hold in the one-dimensional Gelbrich inequality  \eqref{eq: Gelbrich} for
$\sigma$-almost every direction $\theta$ and for every $i$ with $\lambda_i>0$. In particular, equality holds for the summand with 
$\lambda_{i_0}>0$.
Because $S_{i_0}\in\mathbb S_{++}^d$, the projected target $(P_\theta)_\#\nu_{i_0}
=
\mathcal N(m_{i_0}\cdot\theta,\theta^\top S_{i_0}\theta)$
is nondegenerate for every $\theta\in\mathbb S^{d-1}$. Hence the equality
case in the one-dimensional Gelbrich inequality implies that
$(P_\theta)_\#\bar\mu$ is Gaussian for $\sigma$-a.e. $\theta$.
By continuity of characteristic functions, this a.e.-in-$\theta$ statement
extends to every direction. Therefore, by the Cramér--Wold theorem,
$\bar\mu$ is Gaussian.
Hence every
minimizer of $\mathcal F$ is Gaussian.

\end{proof}

\begin{lemma}\label{lem: another formula for F_G} The functional $\mathcal{F}_G^\lambda$ in \eqref{eq: SW obj for Gaussians}, i.e., $\mathcal{F}_G^\lambda(m,S)=\frac12\sum_{j=1}^N\lambda_j
SW_2\bigl(\mathcal N(m,S),\mathcal N(m_j,S_j)\bigr)^2$, can be re-written as
    \begin{align*}\mathcal F_G^\lambda&(m,S)
=\frac{1}{2d}\sum_{j=1}^N\lambda_j|m-m_j|^2\notag+\frac12\int_{\sphere}\sum_{j=1}^N\lambda_j
\left(\sqrt{\theta^\top S\theta}-\sqrt{\theta^\top S_j\theta}\right)^2
\,d\sigma(\theta).
\end{align*}

\end{lemma}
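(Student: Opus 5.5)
The plan is to compute each summand $SW_2(\mathcal N(m,S),\mathcal N(m_j,S_j))^2$ directly from the definition \eqref{eq: SW}, reducing everything to one-dimensional Gaussians, and then to integrate over the sphere using \eqref{eq: Id/d}.

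\emph{Projections.} Fix $\theta\in\sphere$. Since $P_\theta$ is linear, the pushforward of a Gaussian under $P_\theta$ is again Gaussian. Concretely,
\[
(P_\theta)_\#\mathcal N(m,S)=\mathcal N\bigl(m\cdot\theta,\ \theta^\top S\theta\bigr),
\qquad
(P_\theta)_\#\mathcal N(m_j,S_j)=\mathcal N\bigl(m_j\cdot\theta,\ \theta^\top S_j\theta\bigr).
\]
This is the same observation used in the proof of Theorem~\ref{thm: gaussian-sliced-barycenters}.

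\emph{One-dimensional distance.} Next I use the closed form of $W_2$ between one-dimensional Gaussians:
\[
W_2^2\bigl(\mathcal N(a,\alpha^2),\mathcal N(b,\beta^2)\bigr)=(a-b)^2+(\alpha-\beta)^2 .
\]
This follows from the monotone map $s\mapsto b+\frac{\beta}{\alpha}(s-a)$ when $\alpha>0$, with the degenerate case handled by the quantile formula \eqref{eq: 1d OT formula}. It is also the equality case of the Gelbrich bound (Lemma~\ref{lem: Gelbrich}). Applying it gives
\[
W_2^2\bigl(\mathcal N(m,S)^\theta,\nu_j^\theta\bigr)=\bigl((m-m_j)\cdot\theta\bigr)^2+\Bigl(\sqrt{\theta^\top S\theta}-\sqrt{\theta^\top S_j\theta}\Bigr)^2 .
\]

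\emph{Integration over the sphere.} I integrate this identity in $\theta$ against $\sigma$. For the mean term, write $\bigl((m-m_j)\cdot\theta\bigr)^2=(m-m_j)^\top\theta\theta^\top(m-m_j)$. Then \eqref{eq: Id/d} yields
\[
\int_{\sphere}\bigl((m-m_j)\cdot\theta\bigr)^2\,d\sigma(\theta)=\tfrac1d|m-m_j|^2 .
\]
The covariance term stays as a spherical integral. It is finite because its integrand is bounded by $\|S\|+\|S_j\|$ on $\sphere$, so splitting the integral of the sum into two integrals is legitimate.

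\emph{Conclusion.} Finally I multiply by $\lambda_j/2$, sum over $j$, and interchange the finite sum with the integral. This produces the stated formula.

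There is no real obstacle in this argument. The only points needing care are the linearity of the pushforward of Gaussians, which is what makes them stay Gaussian under $P_\theta$, and the integrability that justifies splitting the integral.
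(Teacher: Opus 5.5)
Your proposal is correct and follows essentially the same route as the paper's proof: project both Gaussians to one-dimensional Gaussians, apply the closed-form $W_2^2$ between 1D Gaussians, and integrate over the sphere using \eqref{eq: Id/d} for the mean term. Your extra remarks on the degenerate case and on integrability of the covariance term are harmless additions that the paper leaves implicit.
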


\begin{proof}[Proof of Lemma~\ref{lem: another formula for F_G}]
    For each $\theta\in\mathbb S^{d-1}$, let
$
(P_\theta)_\#\mathcal N(m,S)
=
\mathcal N(m\cdot\theta,\theta^\top S\theta)$ and $
(P_\theta)_\#\mathcal N(m_j,S_j)
=
\mathcal N(m_j\cdot\theta,\theta^\top S_j\theta)$ $j\in [N]$. 
In one dimension,
$
W_2^2\left(
\mathcal N(a,\alpha^2),
\mathcal N(b,\beta^2)
\right)
=
(a-b)^2+(\alpha-\beta)^2.
$
Therefore,
\begin{align*}
SW_2^2\bigl(\mathcal N(m,S),\mathcal N(m_j,S_j)\bigr)
&=
\int_{\mathbb S^{d-1}}
\left[
((m-m_j)\cdot\theta)^2
+
\left(
\sqrt{\theta^\top S\theta}
-
\sqrt{\theta^\top S_j\theta}
\right)^2
\right]
\,d\sigma(\theta)\\
&=
\frac1d |m-m_j|^2
+
\int_{\mathbb S^{d-1}}
\left(
\sqrt{\theta^\top S\theta}
-
\sqrt{\theta^\top S_j\theta}
\right)^2
\,d\sigma(\theta),
\end{align*}
where we have used \eqref{eq: Id/d}.
Summing with weights $\lambda_j$ gives the displayed formula for
$\mathcal F_G^\lambda$.
\end{proof}

\begin{lemma}\label{lem: gaussian differential}
Let $\nu_j=\mathcal N(m_j,S_j)$, $j\in[N]$
where $m_j\in\mathbb R^d$, $S_j\in\mathbb S_{++}^d$, and let
$\lambda\in\Delta_{N-1}$. 
Consider the SW barycenter functional $\mathcal{F}_G^\lambda$ defined in \eqref{eq: SW obj for Gaussians}
for Gaussian candidates $\mu=\mathcal N(m,S)$, $m\in\mathbb R^d$, $S\in\mathbb S_{++}^d$. Then, the covariance term in the second expression in \eqref{eq: SW obj for Gaussians}, i.e.,
$S\mapsto\frac12\int_{\sphere}\sum_{j=1}^N\lambda_j
\left(\sqrt{\theta^\top S\theta}-\sqrt{\theta^\top S_j\theta}\right)^2
d\sigma(\theta),$
is convex, and for every symmetric matrix $H$, 
the directional derivative of $\mathcal F_G^\lambda$ at $(m,S)$ with respect to the covariance variable $S$, in the direction $H$, is given by 
\begin{align}\label{eq: D_S}
D_S\mathcal F_G^\lambda(m,S)[H]
&=\tfrac12\operatorname{Tr}\left(\int_{\sphere}\left(1-\tfrac{\sum_{j=1}^N\lambda_j\sqrt{\theta^\top S_j\theta}}
{\sqrt{\theta^\top S\theta}}\right)
\theta\theta^\top d\sigma(\theta)H\right)
\end{align} 
\end{lemma}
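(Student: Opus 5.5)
The plan is to reduce the whole statement to a fixed direction $\theta$ and then integrate over the sphere. For each $\theta\in\sphere$ and each $j$, set $a(S):=\sqrt{\theta^\top S\theta}$ and $b_j:=\sqrt{\theta^\top S_j\theta}>0$. The integrand of the covariance term is then
\[
g_\theta(S):=\tfrac12\sum_{j=1}^N\lambda_j\bigl(a(S)-b_j\bigr)^2 .
\]
Expanding the square and using $\sum_j\lambda_j=1$ gives
\[
g_\theta(S)=\tfrac12\,\theta^\top S\theta-\Bigl(\sum_j\lambda_j b_j\Bigr)\sqrt{\theta^\top S\theta}+\tfrac12\sum_j\lambda_j b_j^2 .
\]
The first term is linear in $S$ and the last is constant.

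\textbf{Convexity.} It suffices to show that $S\mapsto\sqrt{\theta^\top S\theta}$ is concave on the positive semidefinite cone. Since $\sum_j\lambda_jb_j\ge 0$, this makes $g_\theta$ convex, and integrating against $\sigma$ preserves convexity. The map $S\mapsto\theta^\top S\theta$ is linear and nonnegative on the cone, and $t\mapsto\sqrt t$ is concave and nondecreasing on $[0,\infty)$. The composition of a concave nondecreasing function with a linear map is concave, which gives the claim.

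\textbf{Directional derivative.} Fix $S\in\mathbb S^d_{++}$ and a symmetric $H$. For $|\varepsilon|$ small, $S+\varepsilon H\in\mathbb S^d_{++}$. Let $c>0$ be the smallest eigenvalue of $S$. Then $\theta^\top(S+\varepsilon H)\theta\ge c/2$ uniformly in $\theta$ once $|\varepsilon|\,\|H\|\le c/2$. Pointwise in $\theta$, the chain rule gives
\[
\frac{d}{d\varepsilon}\Big|_{\varepsilon=0}g_\theta(S+\varepsilon H)=\tfrac12\Bigl(1-\frac{\sum_j\lambda_j b_j}{\sqrt{\theta^\top S\theta}}\Bigr)\theta^\top H\theta .
\]
The $\varepsilon$-derivative is bounded uniformly in $\theta$ by a constant depending only on $c$, $\|H\|$ and $\max_j\|S_j\|$, using $b_j\le\|S_j\|^{1/2}$ and $a\ge\sqrt{c/2}$. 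So dominated convergence (or the mean value theorem) lets us differentiate under the integral over the compact sphere.

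Next, write $\theta^\top H\theta=\operatorname{Tr}(\theta\theta^\top H)$ and pull the trace and $H$ outside the $\theta$-integral by linearity. This yields \eqref{eq: D_S}. The mean part $\frac1{2d}\sum_j\lambda_j|m-m_j|^2$ of Lemma~\ref{lem: another formula for F_G} does not depend on $S$, so $D_S\mathcal F_G^\lambda$ equals the derivative of the covariance term.

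\textbf{Main obstacle.} The only delicate point is justifying differentiation under the integral. This is handled by the uniform lower bound on $\theta^\top S\theta$, which holds because $S$ is positive definite and $\sphere$ is compact. The rest is bookkeeping.
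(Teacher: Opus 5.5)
Your proposal is correct and follows essentially the same route as the paper. You expand the square and get convexity from linearity of $S\mapsto\theta^\top S\theta$ together with concavity of its square root (the monotonicity of $\sqrt{\cdot}$ you invoke is not actually needed, since concavity is preserved under composition with a linear map). You then differentiate pointwise in $\theta$, justify passing the derivative under the spherical integral via the uniform lower bound on $\theta^\top S\theta$, and conclude with $\theta^\top H\theta=\operatorname{Tr}(\theta\theta^\top H)$.
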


\begin{proof}[Proof of Lemma~\ref{lem: gaussian differential}]
The covariance part in the second expression in \eqref{eq: SW obj for Gaussians} is the same functional as in the centered case (i.e., when all the Gaussian measures considered are centered at the origin of coordinates). Expanding
the square gives
$$
\frac12
\int_{\mathbb S^{d-1}}
\left[
\theta^\top S\theta
-
2
\left(\sum_{j=1}^N\lambda_j\sqrt{\theta^\top S_j\theta}\right)
\sqrt{\theta^\top S\theta}
+
\sum_{j=1}^N\lambda_j\theta^\top S_j\theta
\right]\,d\sigma(\theta).
$$
For fixed $\theta$, $S\mapsto \theta^\top S\theta$ is linear and
$S\mapsto -\sqrt{\theta^\top S\theta}$ is convex on
$\mathbb S_{++}^d$. Hence the covariance part is convex.

The directional derivative of $\mathcal F_G^\lambda$ at $(m,S)$ with respect to the covariance variable $S$, in the direction of a symmetric $d\times d$ matrix $H$, is defined by 
\begin{equation*}
D_S\mathcal F_G^\lambda(m,S)[H]
:=
\left.\frac{d}{dt}\right|_{t=0}
\mathcal F_G^\lambda(m,S+tH),
\end{equation*}
where $S+tH\in\mathbb S_{++}^d$ for all sufficiently small $|t|$ since $\mathbb S_{++}^d$ is open in the space of symmetric matrices.
For a fixed symmetric matrix $H$, set
$$
S_t:=S+tH,
\qquad
a(\theta):=
\sum_{j=1}^N\lambda_j\sqrt{\theta^\top S_j\theta}.
$$
For each fixed $\theta\in\mathbb S^{d-1}$,
$$
\left.\frac{d}{dt}\right|_{t=0}
\theta^\top S_t\theta
=
\theta^\top H\theta,
$$
and, by the usual derivative of the square-root function,
$$
\left.\frac{d}{dt}\right|_{t=0}
\sqrt{\theta^\top S_t\theta}
=
\frac{\theta^\top H\theta}
{2\sqrt{\theta^\top S\theta}}.
$$
Notice that $a(\theta)$ is independent of $S$. Moreover, since $S\in\mathbb S_{++}^d$, the quantity $\theta^\top S\theta$ is uniformly bounded away from zero on $\mathbb S^{d-1}$, which justifies differentiating under the integral sign. Therefore,
\begin{align*}
D_S\mathcal F_G^\lambda(m,S)[H]&
=
\frac12
\int_{\mathbb S^{d-1}}
\left[
\theta^\top H\theta
-
2a(\theta)
\frac{\theta^\top H\theta}
{2\sqrt{\theta^\top S\theta}}
\right]
\,d\sigma(\theta) \\
&=
\frac12
\int_{\mathbb S^{d-1}}
\left[
1-
\frac{\sum_{j=1}^N\lambda_j\sqrt{\theta^\top S_j\theta}}
{\sqrt{\theta^\top S\theta}}
\right]
\theta^\top H\theta
\,d\sigma(\theta).
\end{align*}
The mean term in the second expression in \eqref{eq: SW obj for Gaussians} does not appear because it is independent of $S$ and $D_S$ keeps $m$ fixed, and hence its directional derivative with respect to $S$ is zero.
Finally, using $\theta^\top H\theta
=
\operatorname{Tr}(\theta\theta^\top H)$,
we obtain the identity \eqref{eq: D_S}.
\end{proof}

\begin{proof}[Proof of Proposition~\ref{prop: gaussian bary}]
The proof is a consequence of Lemma \ref{lem: another formula for F_G} and Lemma \ref{lem: gaussian differential}.

The mean part in the second expression of
\eqref{eq: SW obj for Gaussians} has the unique minimizer
$\overline m$. On the other hand, by \eqref{eq: Gaussian crit point}
and Lemma \ref{lem: gaussian differential}, $D_S\mathcal F_G^\lambda(\overline m,S)[H]
=
0$
for every symmetric matrix $H$.

Let $\widetilde S\in\mathbb S_{++}^d$. For fixed $\overline m$, the map
$
S'\mapsto\mathcal F_G^\lambda(\overline m,S')
$
is convex and differentiable on $\mathbb S_{++}^d$. Hence, the
first-order inequality for differentiable convex functions gives
\begin{align*}
\mathcal F_G^\lambda(\overline m,\widetilde S)
&\geq
\mathcal F_G^\lambda(\overline m,S)
+
D_S\mathcal F_G^\lambda(\overline m,S)
[\widetilde S-S] =
\mathcal F_G^\lambda(\overline m,S).
\end{align*}
Thus, $S$ minimizes the covariance part. Since the mean and covariance
variables decouple in \eqref{eq: SW obj for Gaussians}, it follows that
$
\mathcal F_G^\lambda(\overline m,S)
\leq
\mathcal F_G^\lambda(\widetilde m,\widetilde S)
$
for every $\widetilde m\in\mathbb R^d$ and every
$\widetilde S\in\mathbb S_{++}^d$. Hence
$\mathcal N(\overline m,S)$ minimizes
$\mathcal F_G^\lambda$ over nondegenerate Gaussian candidates.

Moreover, the expression in \eqref{eq: SW obj for Gaussians} extends
continuously to positive semi-definite covariance matrices. Indeed, if
$\widetilde S\in\mathbb S_+^d$, then
$\widetilde S+\varepsilon\Id\in\mathbb S_{++}^d$ for every
$\varepsilon>0$. Therefore, for every
$\widetilde m\in\mathbb R^d$, $\mathcal F_G^\lambda(\overline m,S)
\leq
\mathcal F_G^\lambda
\bigl(\widetilde m,\widetilde S+\varepsilon\Id\bigr)$. 
Letting $\varepsilon\downarrow0$, we conclude that the preceding
minimality also holds over possibly degenerate Gaussian candidates.

Thus, we conclude that $\mathcal N(\overline m,S)$ minimizes the functional 
 $\mathcal F^\lambda$ (given by \eqref{eq: SW bary functional} with Gaussian templates)
over 
Gaussian candidates.

Finally, 
it remains to pass from Gaussian candidates to the full space
$\mathcal P_2(\mathbb R^d)$.
By Theorem \ref{thm: gaussian-sliced-barycenters}, the
infimum of $\mathcal F^\lambda$ over $\mathcal P_2(\mathbb R^d)$
coincides with its infimum over Gaussian candidates. Since
$\mathcal N(\overline m,S)$ attains the latter, it also attains the
former. Hence $\mathcal N(\overline m,S)$ is a global
SW barycenter.

\end{proof}

\begin{proof}[Proof of Corollary \ref{cor:isotropic-gaussian-sliced-condition}]
Set $S_j=\sigma_j^2\Id$ and $S=\sigma^2\Id$. Then, for every
$\theta\in\mathbb S^{d-1}$,
$\sqrt{\theta^\top S_j\theta}=\sigma_j$,
$\sqrt{\theta^\top S\theta}=\sigma$.
Hence the Gaussian criticality condition \eqref{eq: Gaussian crit point} becomes
\[
\left(
1-\frac{\sum_{j=1}^N\lambda_j\sigma_j}{\sigma}
\right)
\int_{\mathbb S^{d-1}}
\theta\theta^\top\,d\sigma(\theta)=0.
\]
Using the identity \eqref{eq: Id/d},
we obtain $\sigma=\sum_{j=1}^N\lambda_j\sigma_j$.
The conclusion follows from
Proposition~\ref{prop: gaussian bary}.
\end{proof}

\paragraph{Detailed Non-Gaussian Critical-Point Construction.}
\label{sec:non-gaussian-details}
\begin{proof}[Details for Example~\ref{example: non Gaussian crit point}]
Let $d\geq 2$, let $\nu=\mathcal N(0,\Id)$
and consider
\begin{equation}
    \mathcal F(\mu):=\frac12 SW_2^2\left(\mu,\mathcal N(0,\Id)\right).
\end{equation}
We show that $\mathcal F$ admits a non-Gaussian critical point.

For $r>0$, let $\mu_r:=\operatorname{Unif}(r\mathbb S^{d-1})$
be the uniform probability measure on the sphere of radius $r$. This measure is
not Gaussian, since it is supported on $r\mathbb S^{d-1}$. 
Let $X\sim\operatorname{Unif}(\mathbb S^{d-1})$, set $Y=rX\sim\mu_r$, and let $Z=X\cdot e_1$ with cumulative distribution function $H$ (i.e., $H(t):=\mathbb P(Z\leq t)$). For $d\geq2$, the law of $Z$ is non-atomic, so every projection of $\mu_r$ satisfies the standing assumption of Proposition \ref{thm: gradient}. 
By rotational invariance of the uniform measure on $\mathbb S^{d-1}$, for every
$\theta\in\mathbb S^{d-1}$,
$$
    X\cdot\theta \stackrel{d}{=} X\cdot e_1=Z, \quad \text{and } \quad P_\theta(Y)=Y\cdot\theta=r(X\cdot\theta)\stackrel{d}{=}rZ. $$
Hence, for each $\theta\in \sphere$, the cumulative distribution function of $(P_\theta)_\#\mu_r$ is given by
$$
    s\mapsto\mathbb P(rZ\leq s)
    =
    \mathbb P\left(Z\leq \frac{s}{r}\right)
    =
    H\left(\frac{s}{r}\right).
$$
On the other hand, since $\nu=\mathcal N(0,\Id)$ is isotropic, $(P_\theta)_\#\nu=\mathcal N(0,1)$
for every $\theta\in\mathbb S^{d-1}$. Let $\Phi$ be the cumulative distribution  function of the standard Gaussian. 

As a consequence, for every $\theta\in\mathbb S^{d-1}$, the one dimensional monotone optimal
transport map from $(P_\theta)_\#\mu_r$ to $(P_\theta)_\#\nu$ is
$
    T_r(s):=
    \Phi^{-1}\left(H\left(\frac{s}{r}\right)\right),
$
where the expression is understood $\mu_r^\theta$-a.e.

Using the first-variation formula for the SW barycenter functional (Proposition \ref{thm: gradient}), we get
\begin{align*}
        \nabla_{W_2}\mathcal F(\mu_r)(x)
    &=
    \int_{\mathbb S^{d-1}}
    \left(
        x\cdot\theta
        -
        T_r(x\cdot\theta)
    \right)\theta
    \,d\sigma(\theta)=
    \int_{\mathbb S^{d-1}}
    \left(
        x\cdot\theta
        -
        \Phi^{-1}\left(H\left(\frac{x\cdot\theta}{r}\right)\right)
    \right)\theta
    \,d\sigma(\theta),
\end{align*}
which is defined in the support of $\mu_r$, that is, for $x\in r\sphere$.
Note that, for every orthogonal matrix $Q\in \mathrm{O}(d)$,
$$
    \nabla_{W_2}\mathcal F(\mu_r)(Qx)=Q\nabla_{W_2}\mathcal F(\mu_r)(x).
$$
Indeed, making the change of variables $\eta=Q^\top\theta$, and using the
invariance of the uniform surface measure under orthogonal
transformations, gives
\begin{align*}
\nabla_{W_2}\mathcal F(\mu_r)(Qx)&=\int_{\mathbb S^{d-1}}
g_r(x\cdot Q^\top\theta)\,\theta\,d\sigma(\theta)
=
\int_{\mathbb S^{d-1}}
g_r(x\cdot\eta)\,Q\eta\,d\sigma(\eta)\\
&=
Q\int_{\mathbb S^{d-1}}
g_r(x\cdot\eta)\,\eta\,d\sigma(\eta)=
Q\nabla_{W_2}\mathcal F(\mu_r)(x),
\end{align*}
where $g_r(t):=t-\Phi^{-1}\left(H\left(\frac{t}{r}\right)\right)$.
In particular, if
$Qe_1=e_1$, then $Q(re_1)=re_1$, and so
$$
    \nabla_{W_2}\mathcal F(\mu_r)(re_1)=\nabla_{W_2}\mathcal F(\mu_r)(Qre_1)=Q\nabla_{W_2}\mathcal F(\mu_r)(re_1).
$$
Thus $\nabla_{W_2}\mathcal F(\mu_r)(re_1)$ is fixed by every orthogonal transformation fixing $e_1$, which implies that
$\nabla_{W_2}\mathcal F(\mu_r)(re_1)$ must be parallel to $e_1$. Hence
$$
    \nabla_{W_2}\mathcal F(\mu_r)(re_1)=0
    \qquad\Longleftrightarrow\qquad
    \nabla_{W_2}\mathcal F(\mu_r)(re_1)\cdot e_1=0.
$$
We now compute this scalar radial component:
\begin{align*}
    \nabla_{W_2}\mathcal F(\mu_r)(re_1)\cdot e_1
    &=
    \int_{\mathbb S^{d-1}}
    \left(
        r\theta_1
        -
        \Phi^{-1}(H(\theta_1))
    \right)\theta_1
    \,d\sigma(\theta)=\frac{r}{d}
    -
    \underbrace{\int_{\mathbb S^{d-1}}
    \Phi^{-1}(H(\theta_1))\theta_1
    \,d\sigma(\theta)}_{A_d},
\end{align*}
where the constant $A_d$ is finite and strictly positive. By choosing $r_\ast:=dA_d$, we obtain
\begin{align*}
\nabla_{W_2}\mathcal F(\mu_{r_\ast})(r_\ast e_1)\cdot e_1&=0.
\end{align*}
and so, $\nabla_{W_2}\mathcal F(\mu_{r_\ast})(r_\ast e_1)=0$.
Finally, every point $x\in r_\ast\mathbb S^{d-1}$ can be written as $x=Q(r_\ast e_1)$
for some $Q\in \mathrm{O}(d)$. Therefore,
$$
    \nabla_{W_2}\mathcal F(\mu_{r_\ast})(x)
    =
    \nabla_{W_2}\mathcal F(\mu_{r_\ast})(Q r_\ast e_1)
    =
    Q\nabla_{W_2}\mathcal F(\mu_{r_\ast})(r_\ast e_1)
    =
    0.
$$
Equivalently,
$
    \nabla_{W_2}\mathcal F(\mu_{r_\ast})(x)=0$ for
    $
    \mu_{r_\ast}\text{-a.e. }x.
$
Thus, $\mu_{r_\ast}
    =
    \operatorname{Unif}(r_\ast\mathbb S^{d-1})$
is a non-Gaussian critical point of $\mathcal F(\mu):=\frac12 SW_2^2\left(\mu,\mathcal N(0,\Id)\right)$.

In this example, we have used that, for each fixed $r>0$, both $\mu_r$ and $\nu=\mathcal N(0,\Id)$ are
rotationally invariant.
Moreover, the isotropy assumption on $\nu$ is essential for this spherical argument. If instead
$\nu=\mathcal N(0,S)$ with a general positive definite covariance matrix $S$,
then $(P_\theta)_\#\nu
    =
    \mathcal N(0,\theta^\top S\theta)$,
and the projected monotone transport map becomes
$
    s\mapsto
    \sqrt{\theta^\top S\theta}\,
    \Phi^{-1}\left(H\left(\frac{s}{r}\right)\right),
$
which generally depends on $\theta$.

\end{proof}

\subsubsection{The Analysis Procedure in the Gaussian Case}\label{app:gauss-analysis}

We now specialize the analysis problem to the case of Gaussian measures. Consider templates $\nu_j=\mathcal N(m_j,S_j)$, $j=1,\ldots,N$, and let $\mu=\mathcal N(m,S)$, 
where $m,m_j\in\mathbb R^d$ and $S,S_j\in\mathbb S_{++}^d$. For each direction
$\theta\in\mathbb S^{d-1}$, the projected measures are one-dimensional Gaussians:
\begin{align*}
\nu_j^\theta
&=\mathcal N(m_j\cdot\theta,\theta^\top S_j\theta), \, j\in[N],
 \qquad 
\mu^\theta
=\mathcal N(m\cdot\theta,\theta^\top S\theta).
\end{align*}
Hence the monotone optimal transport map
$T_{j,\theta}^\mu=T_{\mu^\theta\to\nu_j^\theta}$ is affine and is given by
\begin{equation}\label{eq: gaussian-1d-map}
    T_{j,\theta}^\mu(s)
    =
    m_j\cdot\theta
    +
    \left(
    \frac{\theta^\top S_j\theta}
         {\theta^\top S\theta}
    \right)^{1/2}
    \bigl(s-m\cdot\theta\bigr).
\end{equation}
Define the matrix
\begin{equation}\label{eq: Bj-gaussian}
    B_j( S)
    :=
    \int_{\mathbb S^{d-1}}
    \left(
    \frac{\theta^\top S_j\theta}
         {\theta^\top S\theta}
    \right)^{1/2}
    \theta\theta^\top
    \,d\sigma(\theta).
\end{equation}
Then the sliced criticality condition \eqref{eq: c.p. condition as f.p.} ($\mu$-a.e. $x$),
reduces to the finite-dimensional system, 
\begin{align}\label{eq: gaussian-analysis-system}
m&=\sum_{j=1}^N\lambda_jm_j, \quad 
\sum_{j=1}^N\lambda_jB_j(S)=\frac1d\Id
\end{align}
for $\lambda\in\Delta_{N-1}$.
Indeed, using \eqref{eq: gaussian-1d-map}, we obtain
\begin{align*}
&\sum_{j=1}^N\lambda_j
\int_{\sphere}T_{j,\theta}^\mu(x\cdot\theta)\theta\,d\sigma(\theta)=\frac1d\sum_{j=1}^N\lambda_jm_j
+\left(\sum_{j=1}^N\lambda_jB_j(S)\right)(x-m).
\end{align*}
Then, 
the equality in \eqref{eq: c.p. condition as f.p.} holds for $\mu$-a.e. $x$ if and only if the constant and linear parts agree, which gives
\eqref{eq: gaussian-analysis-system}.

Therefore, in the exact Gaussian analysis problem, the barycentric coordinates can be recovered by solving the linear problem \eqref{eq: gaussian-analysis-system} for $\lambda\in\Delta_{N-1}$.

In practice, one may instead solve the constrained least-squares problem
\begin{align}\label{eq: gaussian-analysis-ls}
\widehat\lambda
&\in\arg\min_{\lambda\in\Delta_{N-1}}
\Bigg\{
\left\|m-\sum_{j=1}^N\lambda_jm_j\right\|^2
+\rho\left\|\frac1d\Id-
\sum_{j=1}^N\lambda_jB_j(S)\right\|_F^2
\Bigg\}.
\end{align}
where $\rho>0$ balances the mean and covariance contributions, and $\|\cdot\|_F$ denotes the Frobenius norm for $d\times d$ matrices.

For numerical implementation, the matrices $B_j( S)$ can be approximated by sampling directions
$\theta_1,\ldots,\theta_L\in\mathbb S^{d-1}$. With uniform weights, we set
\begin{equation}\label{eq: Bj-monte-carlo}
    \widehat B_j( S)
    :=
    \frac1L
    \sum_{\ell=1}^L
    \left(
    \frac{\theta_\ell^\top S_j\theta_\ell}
         {\theta_\ell^\top S\theta_\ell}
    \right)^{1/2}
    \theta_\ell\theta_\ell^\top.
\end{equation}

Since both terms in \eqref{eq: gaussian-analysis-ls} are nonnegative and
$\rho>0$, the minimum value is zero if and only if there exists
$\lambda\in\Delta_{N-1}$ satisfying the Gaussian sliced criticality
equations in \eqref{eq: gaussian-analysis-system}. By the preceding
Proposition \ref{prop: gaussian bary}, every such $\lambda$ is a barycentric coordinate vector of
$\mu=\mathcal N(m,S)$ with respect to the dictionary
$\{\nu_j\}_{j=1}^N$. Hence, if the minimum is zero and the minimizer is
unique, then the barycentric coordinate vector is uniquely identified. If
the minimum is zero but the set of minimizers is not a singleton, then the
same measure $\mu$ admits several barycentric coordinate vectors (i.e., the coordinates $\lambda$ cannot be uniquely recovered). If the minimum is positive, then the exact Gaussian
criticality equations have no solution, and a minimizer of
\eqref{eq: gaussian-analysis-ls} should only be interpreted as a
best-fitting approximate coordinate vector.

\subsection{Global Minimizers}

We first note that the choice \eqref{eq: 1d OT formula} makes $(\theta,s)\mapsto u_{j,\theta}^\mu(s)$, and hence the potentials $\varphi_{j,\theta}^\mu$ in \eqref{eq: Kant pot} are jointly measurable.

\begin{proof}[Proof of Theorem~\ref{thm:SW-supporting-potential}]
We first prove the supporting-potential criterion. Fix an arbitrary
competitor $\rho\in\mathcal P(K)$. For each $j\in[N]$ and for
$\sigma$-a.e. $\theta$, let $\varphi_{j,\theta}^\mu$ be a Kantorovich
potential for the pair $(\mu^\theta,\nu_j^\theta)$ and the cost
$c(s,t)=\frac12|s-t|^2$. By Kantorovich duality, if
$(\varphi_{j,\theta}^\mu)^c$ denotes the corresponding $c$-transform, then
$$
    \frac12 W_2^2(\mu^\theta,\nu_j^\theta)
    =
    \int_{\mathbb R}\varphi_{j,\theta}^\mu\,d\mu^\theta
    +
    \int_{\mathbb R}(\varphi_{j,\theta}^\mu)^c\,d\nu_j^\theta.
$$
Since the same admissible pair
$(\varphi_{j,\theta}^\mu,(\varphi_{j,\theta}^\mu)^c)$ can be tested against
$(\rho^\theta,\nu_j^\theta)$, we obtain
$$
    \frac12 W_2^2(\rho^\theta,\nu_j^\theta)
    \geq
    \int_{\mathbb R}\varphi_{j,\theta}^\mu\,d\rho^\theta
    +
    \int_{\mathbb R}(\varphi_{j,\theta}^\mu)^c\,d\nu_j^\theta.
$$
Subtracting the two identities and using the definition of pushforward gives
$$
    \frac12 W_2^2(\rho^\theta,\nu_j^\theta)
    -
    \frac12 W_2^2(\mu^\theta,\nu_j^\theta)
    \geq
    \int_{\mathbb R}\varphi_{j,\theta}^\mu\,d(\rho^\theta-\mu^\theta)
    =
    \int_{\mathbb R^d}
        \varphi_{j,\theta}^\mu(x\cdot\theta)
    \,d(\rho-\mu)(x).
$$
Multiplying by $\lambda_j$, summing over $j$, and integrating over
$\mathbb S^{d-1}$, we get
\begin{align*}
    \mathcal F^\lambda(\rho)
    -
    \mathcal F^\lambda(\mu)
    &\geq
    \sum_{j=1}^N\lambda_j
    \int_{\mathbb S^{d-1}}
    \int_{\mathbb R^d}
        \varphi_{j,\theta}^\mu(x\cdot\theta)
    \,d(\rho-\mu)(x)
    \,d\sigma(\theta) =
    \int_{\mathbb R^d}\Phi_\mu(x)\,d(\rho-\mu)(x).
\end{align*}
By the supporting-potential condition \eqref{eq:supporting-potential-condition},
$    \int_{\mathbb R^d}\Phi_\mu\,d\mu=c_\mu$ and
  $  \int_{\mathbb R^d}\Phi_\mu\,d\rho\geq c_\mu.
$
Therefore $\int_{\mathbb R^d}\Phi_\mu\,d(\rho-\mu)\geq 0$.
Hence $\mathcal F^\lambda(\rho)
    \geq
    \mathcal F^\lambda(\mu)$
for every $\rho\in\mathcal P(K)$. Thus $\mu$ is a global minimizer.
\end{proof}

\begin{remark}\label{eq: analogous to AC}
The hypothesis \eqref{eq:supporting-potential-condition} is the sliced analogue of the dual optimality condition for classical Wasserstein barycenters in \cite[Prop. 3.8]{agueh2011barycenters}; see also \cite[Sec. 3.1.5]{panaretos2020invitation}. In the classical $W_2$ barycenter problem, let $\mu$ be a candidate barycenter and let $\psi_i$ be convex potentials such that $\nabla\psi_i$ transports $\nu_i$ to $\mu$. The Agueh--Carlier condition can be written, up to an additive constant, as
\begin{equation}\label{eq: A-C}
    \sum_{i=1}^N\lambda_i\psi_i^*(y)
    \leq \tfrac12|y|^2+C
    \qquad\text{for all }y\in\mathbb R^d,
\end{equation}    
with equality for $\mu$-a.e. $y$, where $\psi_i^*$ is the convex conjugate of $\psi_i$. Thus the barycenter is certified by a global supporting inequality whose contact set contains the support of the candidate measure in the almost-everywhere sense. At differentiability points of the contact set, differentiating \eqref{eq: A-C} gives $\sum_{i=1}^N\lambda_i\nabla\psi_i^*(y)=y$
and $T_i=\nabla\psi_i^*$ is the optimal map from $\mu$ to $\nu_i$.
Our sliced condition has the same structure, with the full-dimensional Brenier potentials replaced by averaged one-dimensional optimal transport potentials:
Let
$u_{j,\theta}^\mu$ be a convex potential satisfying $(u_{j,\theta}^\mu)'(s)=T_{j,\theta}^\mu(s)$.
For the quadratic cost, one may choose a Kantorovich potential $    \varphi_{j,\theta}^\mu$ as in \eqref{eq: Kant pot}. Then,
\begin{equation*}
\Phi_\mu(x)
=\tfrac{1}{2d}|x|^2-
\sum_{j=1}^N\lambda_j\int_{\sphere}
 u_{j,\theta}^\mu(x\cdot\theta)\,d\sigma(\theta).
\end{equation*}    
Therefore, the supporting condition \eqref{eq:supporting-potential-condition} is equivalently
\begin{equation*}
\sum_{j=1}^N\lambda_j\int_{\sphere}
 u_{j,\theta}^\mu(x\cdot\theta)\,d\sigma(\theta)
\leq \tfrac{1}{2d}|x|^2-c_\mu,
\quad x\in K
\end{equation*}    
with equality  $\mu$-a.e. $x$. 
At differentiability points in the interior of $K$ where equality holds, differentiating the contact relation yields the corresponding
Karcher equation \eqref{eq: Karcher}.
\end{remark}

\begin{lemma}
\label{lem:gradient-of-sliced-potential}
In addition to the standing assumptions considered in this section,
let $O\supset K$ be an open neighborhood of $K$. Suppose that,
for every $j\in[N]$ and for $\sigma$-a.e.
$\theta\in\mathbb S^{d-1}$, the measurable representative
$T_{j,\theta}^{\mu}$ fixed above is continuous on $P_\theta(O)$.
Then the function $\Phi_\mu$ defined by
\eqref{eq: functional in terms of kant pot} belongs to $C^1(O)$, and
\begin{equation}\label{eq: grad of Phi_mu}
    \nabla\Phi_\mu(x)
    =
    \sum_{j=1}^N\lambda_j U_j^\mu(x)
    \qquad\text{for every }x\in O.
\end{equation}
\end{lemma}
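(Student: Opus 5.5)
The plan is to differentiate $\Phi_\mu$ term by term under the spherical integral, using the explicit form $\varphi_{j,\theta}^\mu(s)=\tfrac12 s^2-u_{j,\theta}^\mu(s)$ with $u_{j,\theta}^\mu(s)=\int_0^s T_{j,\theta}^\mu(r)\,dr$. Fix $j$ and a $\theta$ in the full-measure set where $T_{j,\theta}^\mu$ is continuous on $P_\theta(O)$. Since $O$ is open, $P_\theta(O)$ is an open interval or union of intervals containing $P_\theta(K)$. By the fundamental theorem of calculus, $u_{j,\theta}^\mu$ is $C^1$ on $P_\theta(O)$ with derivative $T_{j,\theta}^\mu$. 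Strictly, $u$ is defined by integrating from $0$, which may lie outside $P_\theta(O)$. However, $T_{j,\theta}^\mu$ is a monotone, finite-valued quantile representative, hence locally bounded on $\mathbb R$, so $u$ is well defined and locally Lipschitz everywhere. Then $x\mapsto \varphi_{j,\theta}^\mu(x\cdot\theta)$ is $C^1$ on $O$, and the chain rule gives
\[
\nabla_x\,\varphi_{j,\theta}^\mu(x\cdot\theta)=\bigl(x\cdot\theta-T_{j,\theta}^\mu(x\cdot\theta)\bigr)\theta .
\]

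Next I justify interchanging $\nabla_x$ with $\int_{\sphere}\,d\sigma(\theta)$. Since $T_{j,\theta}^\mu$ is nondecreasing with range in the support of $\nu_j^\theta\subset P_\theta(K)$, and $K$ is compact, I get a uniform bound $|T_{j,\theta}^\mu(s)|\le R:=\max_{y\in K}|y|$ for all $s$ and $\theta$. I will adjust the representative on null sets if needed, for example the endpoint values set to $0$, which still lie in $[-R,R]$. Consequently, on any bounded set $B\subset O$, $|(x\cdot\theta-T(x\cdot\theta))\theta|\le \sup_B|x|+R$ uniformly in $\theta$. The same bound, integrated over a segment, shows that the difference quotients of $\varphi_{j,\theta}^\mu(x\cdot\theta)$ are dominated. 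Dominated convergence, via the mean value theorem on small segments inside $O$, then gives that the partial derivatives of $\Phi_\mu$ exist and equal $\sum_j\lambda_j\int_{\sphere}(x\cdot\theta-T_{j,\theta}^\mu(x\cdot\theta))\theta\,d\sigma(\theta)=\sum_j\lambda_jU_j^\mu(x)$ for every $x\in O$. Measurability in $\theta$ follows from the joint measurability of the quantile representative.

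Finally, I need continuity of $x\mapsto U_j^\mu(x)$ on $O$ to conclude $\Phi_\mu\in C^1(O)$. For $x_n\to x$ in $O$ and $\sigma$-a.e. $\theta$, continuity of $T_{j,\theta}^\mu$ at $x\cdot\theta\in P_\theta(O)$ gives pointwise convergence of the integrands. The uniform bound above then lets dominated convergence finish the argument. A locally bounded, continuous gradient obtained from continuous partial derivatives implies $C^1$.

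The main obstacle I expect is the uniform domination in $\theta$. Continuity of $T_{j,\theta}^\mu$ is only assumed $\theta$-by-$\theta$, so everything hinges on the compact-support bound $|T_{j,\theta}^\mu|\le R$ and on checking that the chosen measurable representative respects it off $\mu^\theta$-null sets.
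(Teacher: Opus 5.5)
Your proposal is correct and follows essentially the same route as the paper: the uniform bound $|T_{j,\theta}^\mu|\le R_K$ coming from $\operatorname{supp}\nu_j^\theta\subset P_\theta(K)$, the chain rule for $x\mapsto\varphi_{j,\theta}^\mu(x\cdot\theta)$ on $O$, and dominated convergence on compact subsets of $O$, used both to differentiate under the spherical integral and to get continuity of the gradient. One step you leave implicit, which the paper states, is the bound $|\varphi_{j,\theta}^\mu(x\cdot\theta)|\le \tfrac12 R_{K'}^2+R_KR_{K'}$ on compacts $K'\subset O$; this ensures $\Phi_\mu$ is well defined on $O$ and follows immediately from your bound on $T_{j,\theta}^\mu$.
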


\begin{proof}[Proof of Lemma~\ref{lem:gradient-of-sliced-potential}]
At every point where $u_{j,\theta}^\mu$ is differentiable,
$T_{j,\theta}^\mu=(u_{j,\theta}^\mu)'$, therefore
\begin{equation}\label{eq: potential'}
    (\varphi_{j,\theta}^\mu)'(s)
    =s-T_{j,\theta}^\mu(s).
\end{equation}    
Set $R_K:=\sup_{z\in K}|z|$.
Since $\nu_j^\theta$ is supported in $P_\theta(K)$, then the representative of
$T_{j,\theta}^{\mu}$ chosen as in \eqref{eq: 1d OT formula} satisfies $|T_{j,\theta}^{\mu}(s)|
    \leq R_K$ for all $s\in\mathbb R$,
for every $j\in[N]$ and for $\sigma$-a.e. $\theta$. Moreover,
because $T_{j,\theta}^{\mu}$ is continuous on $P_\theta(O)$, its
normalized primitive $u_{j,\theta}^{\mu}$ is of class $C^1$ there,
with $(u_{j,\theta}^{\mu})'(s)
    =
    T_{j,\theta}^{\mu}(s)$. 
Consequently, by \eqref{eq: potential'},
the function $x\mapsto
    \varphi_{j,\theta}^{\mu}(x\cdot\theta)$
is of class $C^1$ on $O$, and
\begin{equation}\label{eq:gradient-slice-potential}
    \nabla_x
    \bigl[\varphi_{j,\theta}^{\mu}(x\cdot\theta)\bigr]
    =
    \bigl(
        x\cdot\theta
        -
        T_{j,\theta}^{\mu}(x\cdot\theta)
    \bigr)\theta .
\end{equation}

Let $K'\subset O$ be compact and set $R_{K'}:=\sup_{x\in K'}|x|$.
For every $x\in K'$,
$
\left|
    x\cdot\theta
    -
    T_{j,\theta}^{\mu}(x\cdot\theta)
\right|
\leq
R_{K'}+R_K.
$
The RHS is independent of $j$ and $\theta$, and hence
is integrable over $\mathbb S^{d-1}$.
Moreover, using \eqref{eq: Kant pot}
we have, for $x\in K'$,
$
\left|
    \varphi_{j,\theta}^{\mu}(x\cdot\theta)
\right|
\leq
\frac12R_{K'}^2+R_KR_{K'}.
$
Thus the integral over the sphere defining $\Phi_\mu$ is well defined and locally bounded on $O$.

The bound on \eqref{eq:gradient-slice-potential} permits
differentiation under the spherical integral. Therefore,
\begin{align*}
    \nabla\Phi_\mu(x)
    &=
    \sum_{j=1}^N\lambda_j
    \int_{\mathbb S^{d-1}}
    \bigl(
        x\cdot\theta
        -
        T_{j,\theta}^{\mu}(x\cdot\theta)
    \bigr)\theta
    \,d\sigma(\theta) =
    \sum_{j=1}^N\lambda_jU_j^\mu(x).
\end{align*}

Finally, if $x_n\to x$ in $O$, all these points lie in a common
compact subset of $O$. Continuity of $T_{j,\theta}^{\mu}$ gives
pointwise convergence of the integrands for $\sigma$-a.e. $\theta$,
while the preceding uniform bound permits dominated convergence.
Hence $\nabla\Phi_\mu(x_n)\to\nabla\Phi_\mu(x)$, proving that
$\Phi_\mu\in C^1(O)$.
\end{proof}

\begin{proof}[Proof of Corollary~\ref{cor:SW-Karcher-to-barycenter}]

By assumption and Lemma \ref{lem:gradient-of-sliced-potential},
$
    \nabla\Phi_\mu(x)
    =
    \sum_{j=1}^N\lambda_j U_j^\mu(x)
    =
    0$
    for all $x\in K.
$
Since $K$ is convex, for any $x,y\in K$ the segment
$(1-t)x+ty$ lies in $K$. Hence
\begin{align*}
    \Phi_\mu(y)-\Phi_\mu(x)
    &=
    \int_0^1
    \left\langle
        \nabla\Phi_\mu((1-t)x+ty),
        y-x
    \right\rangle
    \,dt =0.
\end{align*}
Therefore $\Phi_\mu$ is constant on $K$. Let this constant be
$c_\mu$. Then
$
    \Phi_\mu(x)=c_\mu
    $ for all $x\in K.
$
In particular, condition \eqref{eq:supporting-potential-condition} holds. Applying
Theorem~\ref{thm:SW-supporting-potential}, we conclude that $\mathcal F^\lambda(\mu)
=\min_{\rho\in\mathcal P(K)}\mathcal F^\lambda(\rho).$
\end{proof}

\begin{lemma}
\label{lem:uniform-Lipschitz-slices}
In addition to the standing assumptions considered in this section, suppose that there exists a constant $\mathrm L<\infty$, independent of $j$
and $\theta$, such that, after choosing measurable representatives, they satisfy \eqref{eq: unif lip cond}, i.e.,
\begin{equation*}    |T_{j,\theta}^\mu(s)-T_{j,\theta}^\mu(t)|
    \leq \mathrm L|s-t|
    \qquad
    \text{for all }s,t\in\mathbb R,
\end{equation*}
for every $j\in[N]$ and for $\sigma$-a.e. $\theta$. Then, the map $\Phi_\mu$ defined by \eqref{eq: functional in terms of kant pot} belongs to $C^{1,1}(\mathbb R^d)$ and  the identity \eqref{eq: grad of Phi_mu}
holds for every $x\in\mathbb R^d$.
\end{lemma}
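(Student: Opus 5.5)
The plan is to reduce the statement to Lemma~\ref{lem:gradient-of-sliced-potential}, applied with the open neighborhood $O=\mathbb R^d$. Then we upgrade the conclusion from $C^1$ to $C^{1,1}$ using the uniform Lipschitz bound \eqref{eq: unif lip cond}.

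\emph{Step 1: continuity and local bounds.} The uniform Lipschitz condition makes each representative $T_{j,\theta}^\mu$ continuous on all of $\mathbb R=P_\theta(\mathbb R^d)$, for every $j$ and $\sigma$-a.e.\ $\theta$. Hence the continuity hypothesis of Lemma~\ref{lem:gradient-of-sliced-potential} holds with $O=\mathbb R^d$.

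One point needs checking: that lemma used the bound $|T_{j,\theta}^\mu(s)|\le R_K$, which came from $\nu_j^\theta$ being supported in $P_\theta(K)$. This bound still holds for the quantile representative \eqref{eq: 1d OT formula} at every $s\in\mathbb R$. Continuity forces the representative to equal its continuous extension everywhere, including on the $\mu^\theta$-null set where we assigned the value $0$. I will remark that the Lipschitz representative still takes values in $[-R_K,R_K]$, because it is a continuous extension of a function with values there. Alternatively, the Lipschitz bound alone gives $|T_{j,\theta}^\mu(s)|\le |T_{j,\theta}^\mu(0)|+\mathrm L|s|$, once $|T_{j,\theta}^\mu(0)|$ is controlled uniformly in $\theta$; this again follows from $\operatorname{supp}\nu_j\subset K$.

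With these bounds, Lemma~\ref{lem:gradient-of-sliced-potential} gives $\Phi_\mu\in C^1(\mathbb R^d)$ and the identity \eqref{eq: grad of Phi_mu} at every $x\in\mathbb R^d$.

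\emph{Step 2: Lipschitz gradient.} For $x,y\in\mathbb R^d$, use \eqref{eq: grad of Phi_mu} together with $|\theta|=1$ and $|(x-y)\cdot\theta|\le|x-y|$:
\begin{align*}
|\nabla\Phi_\mu(x)-\nabla\Phi_\mu(y)|
&\le\sum_{j=1}^N\lambda_j\int_{\sphere}\Bigl(|(x-y)\cdot\theta|+\bigl|T_{j,\theta}^\mu(x\cdot\theta)-T_{j,\theta}^\mu(y\cdot\theta)\bigr|\Bigr)\,d\sigma(\theta)\\
&\le(1+\mathrm L)\,|x-y|.
\end{align*}
This uses \eqref{eq: unif lip cond} for $\sigma$-a.e.\ $\theta$; the remaining null set of directions does not affect the integral. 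A sharper constant, for instance $\tfrac1d+\mathrm L$ via \eqref{eq: Id/d}, is not needed.

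\emph{Main obstacle.} The delicate point is not the estimate but Step 1: making sure the chosen measurable representative is genuinely continuous and bounded on all of $\mathbb R$, not just $\mu^\theta$-a.e., so that Lemma~\ref{lem:gradient-of-sliced-potential} applies on the whole space. The statement says "after choosing measurable representatives," so I will take the Lipschitz representative as the one used in defining $u_{j,\theta}^\mu$ and $\Phi_\mu$. Changing $T$ on a $\mu^\theta$-null set changes $\varphi_{j,\theta}^\mu$ only by the integral of a null modification, which vanishes since the representatives agree Lebesgue-a.e.\ on the support's convex hull where it matters. Joint measurability is preserved since the representative is a pointwise limit of the quantile formula.
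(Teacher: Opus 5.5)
Your route is the paper's: differentiate under the spherical integral on all of $\R^d$, using local uniform domination from compactness of $K$ and the Lipschitz bound, then get $|\nabla\Phi_\mu(x)-\nabla\Phi_\mu(y)|\le(1+\mathrm L)|x-y|$ exactly as in your Step~2. The first justification in your Step~1 is false, however, and with it the verbatim appeal to Lemma~\ref{lem:gradient-of-sliced-potential}. A continuous representative is pinned down only on $\operatorname{supp}\mu^\theta$, and off that set it need not stay in $[-R_K,R_K]$: in the paper's own Example~\ref{ex:homothetic-uniform-modulus}, $T_{j,\theta}^\mu(s)=a_js+b_j\cdot\theta$ is unbounded. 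You also cannot keep the quantile representative \eqref{eq: 1d OT formula}. It is typically discontinuous at $\min\operatorname{supp}\mu^\theta$, where it jumps from the assigned value $0$ to values near $\min\operatorname{supp}\nu_j^\theta$. So the Lipschitz representative is genuinely a new one, and the global bound $|T_{j,\theta}^\mu|\le R_K$ used in the proof of Lemma~\ref{lem:gradient-of-sliced-potential} is lost. Your fallback is the correct fix, and it is what the paper does. Take $s_0\in\operatorname{supp}\mu^\theta\subset[-R_K,R_K]$; by continuity, $T_{j,\theta}^\mu(s_0)\in\operatorname{supp}\nu_j^\theta\subset[-R_K,R_K]$. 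Hence $|T_{j,\theta}^\mu(s)|\le(1+\mathrm L)R_K+\mathrm L|s|$ uniformly in $j,\theta$; this uses $\mu\in\mathcal P(K)$, not only $\operatorname{supp}\nu_j\subset K$. With this local bound, re-run that lemma's dominated-differentiation argument rather than citing it.

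Also drop the claim in your last paragraph that changing $T_{j,\theta}^\mu$ on a $\mu^\theta$-null set leaves $\varphi_{j,\theta}^\mu$ unchanged ``where it matters.'' If $\operatorname{supp}\mu^\theta$ has gaps, or $\mu^\theta$ is singular, two representatives agreeing $\mu^\theta$-a.e.\ can differ on a set of positive Lebesgue measure inside the convex hull of the support. So $u_{j,\theta}^\mu$ and $\Phi_\mu$ genuinely depend on the representative. This is harmless for the lemma, since $\Phi_\mu$ and $U_j^\mu$ are simply defined with the chosen Lipschitz representative, but your justification is wrong and unnecessary. Likewise, joint measurability of that representative should be taken as part of the hypothesis. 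It cannot be derived from the quantile formula, which says nothing off $\operatorname{supp}\mu^\theta$.
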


\begin{proof}[Proof of Lemma~\ref{lem:uniform-Lipschitz-slices}] The proof is direct.
For $j\in[N]$ and $\sigma$-a.e. $\theta\in\mathbb S^{d-1}$, set
$
G_{j,\theta}(x)
:=
\bigl(x\cdot\theta
      -T_{j,\theta}^\mu(x\cdot\theta)\bigr)\theta .
$
Since $T_{j,\theta}^\mu$ is $L$-Lipschitz and we have the expression \eqref{eq: potential'},
the map $x\mapsto\varphi_{j,\theta}^\mu(x\cdot\theta)$ is of class
$C^1$, with gradient $G_{j,\theta}$. Moreover,
\begin{align*}
\|G_{j,\theta}(x)-G_{j,\theta}(y)\|
&\leq |(x-y)\cdot\theta|
 +\bigl|T_{j,\theta}^\mu(x\cdot\theta)
       -T_{j,\theta}^\mu(y\cdot\theta)\bigr| \leq (1+\mathrm L)\|x-y\|.
\end{align*}
Compactness of $K$ and the uniform Lipschitz bound give the local
uniform domination required to differentiate under the integral over the sphere. Therefore,
$
\nabla\Phi_\mu(x)
=
\sum_{j=1}^N\lambda_j
\int_{\mathbb S^{d-1}}G_{j,\theta}(x)\,d\sigma(\theta)
=
\sum_{j=1}^N\lambda_jU_j^\mu(x).
$
Finally,
$$
\|\nabla\Phi_\mu(x)-\nabla\Phi_\mu(y)\|
\leq
\sum_{j=1}^N\lambda_j
\int_{\mathbb S^{d-1}}
\|G_{j,\theta}(x)-G_{j,\theta}(y)\|
\,d\sigma(\theta)
\leq
(1+\mathrm L)\|x-y\|.
$$
Thus $\Phi_\mu\in C^{1,1}(\mathbb R^d)$.
\end{proof}

\begin{remark} The uniform Lipschitz assumption in Lemma \ref{lem:uniform-Lipschitz-slices} is deliberately stronger than the minimal regularity needed for $\Phi_\mu\in C^1$, but it has the advantage of being simple and directly checkable. Its role is analogous to the regularity of Brenier maps \cite{caffarelli1992regularity} in the classical Wasserstein criterion for Karcher means \cite[Thm. 3.1.15]{panaretos2020invitation}: regularity upgrades an identity that is initially valid only $\mu$-a.e. to a pointwise identity on the support. In the present sliced setting, Lemma~\ref{lem:uniform-Lipschitz-slices} gives a continuous representative of $\sum_{j=1}^N\lambda_jU_j^\mu$ and identifies it with the classical gradient $\nabla\Phi_\mu$. \end{remark}

\begin{example}
\label{ex:homothetic-uniform-modulus} As an elementary example where Lemma \ref{lem:uniform-Lipschitz-slices}  applies, consider translations and dilations of a fixed probability measure $\mu$ of the form $\nu_j=(a_j\Id+b_j)_\#\mu$, 
 $a_j>0$, $b_j\in \mathbb R^d$.
Then, $T_{j, \theta}^\mu(s)=a_j s+b_j \cdot \theta$, 
and hence $|T_{j,\theta}^\mu(s)-T_{j,\theta}^\mu(t)|
\leq \left(\max_j a_j\right)|s-t|.$

Similarly, if we fix a probability measure $\rho$ compactly supported and spherically symmetric, by considering invertible $d\times d$ matrices $A_j$, $j=0,1,\dots,N$, and setting $\mu=(m_0+A_0z)_\#\rho$, 
$\nu_j=(m_j+A_jz)_\#\rho$, $j\in[N]$,
then 
\begin{align*}
T_{j,\theta}^\mu(s)
&=m_j\cdot\theta+
\frac{\|A_j^\top\theta\|}{\|A_0^\top\theta\|}
\left(s-m_0\cdot\theta\right).
\end{align*} 
and therefore it follows that the hypotheses of  Lemma \ref{lem:uniform-Lipschitz-slices} are satisfied.

\end{example}

\begin{proof}[Proof of Corollary~\ref{cor:exact-SW-analysis}]
($\Rightarrow$): Suppose first that $\mu$ minimizes $\mathcal F^\lambda$ over $\mathcal P(K)$. Let $v\in C_c^1(\operatorname{int}K;\mathbb R^d)$. For sufficiently small $|t|$, the perturbation $\mu_t=(\Id+tv)_\#\mu$
still belongs to $\mathcal P(K)$. Hence the first-variation vanishes:
$
    \int_K\left\langle
    \sum_{j=1}^N\lambda_jU_j^\mu(x),v(x)
    \right\rangle d\mu(x)=0
$
($C_c^1(\operatorname{int}K;\mathbb R^d)$ is dense in $L^2(\mu;\mathbb R^d)$). Therefore, \eqref{eq: Karcher pp} holds, and so
$\lambda^\top\mathcal A(\mu)\lambda=0$. Since $\mathcal A(\mu)$ is positive semi-definite, this is equivalent to $\mathcal A(\mu)\lambda=0$.

($\Leftarrow$): Conversely, assume that $\mathcal A(\mu)\lambda=0$. Then, we have \eqref{eq: Karcher pp}.
By Lemma \ref{lem:uniform-Lipschitz-slices}, the vector field $x\mapsto\sum_j\lambda_jU_j^\mu(x)$ has a continuous representative and equals $\nabla\Phi_\mu(x)$. Since $\operatorname{supp}\mu=K$, a continuous function that vanishes $\mu$-a.e. must vanish at every point of $K$. Thus
$
    \sum_{j=1}^N\lambda_jU_j^\mu(x)=0
    $ for every $x\in K.
$
Corollary \ref{cor:SW-Karcher-to-barycenter} now implies that $\mu$ is a global minimizer of $\mathcal F^\lambda$ over $\mathcal P(K)$.
\end{proof}

\paragraph{Detailed Failure of Convex-Support Preservation for SW Barycenters.}

\begin{proof}[Proof of Example \ref{prop:SW-convex-support-counterexample}] Here we denote $J(\mu):=\frac12 SW_2^2(\mu,\nu_1)
           +\frac12 SW_2^2(\mu,\nu_2)$.
Both targets are centrally symmetric. Since $W_2^2$ is convex in its
first argument under ordinary linear mixtures, the same is true of
$ SW_2^2$ and $J$. Indeed, mixing couplings with a fixed
second marginal proves the convexity of $W_2^2$, and projection commutes
with mixtures. Moreover, symmetry of the targets gives
$
    J\bigl((-\mathrm{Id})_\#\mu\bigr)=J(\mu).
$
Consequently, the symmetrization
$    \mu^{\mathrm{sym}}
    :=\frac12\bigl(\mu+(-\mathrm{Id})_\#\mu\bigr)
$
satisfies $J(\mu^{\mathrm{sym}})\leq J(\mu)$. Thus the global infimum
may be computed over centrally symmetric measures.

Let $\rho\sim Y$ be a centrally symmetric real random variable with finite
second moment, and let $A\geq0$. Then
\begin{equation}
\label{eq:cs-symmetric-two-point-cost}
    W_2^2\!\left(\rho,\frac12(\delta_{-A}+\delta_A)\right)
    =\mathbb E[Y^2]+A^2-2A\mathbb E|Y|.
\end{equation}
To see this, every coupling of $Y$ with
$Z\sim\frac12(\delta_{-A}+\delta_A)$ satisfies
$YZ\leq A|Y|$. Equality is attained by pairing positive values of $Y$
with $A$ and negative values with $-A$, splitting any mass at $Y=0$
equally between the two target atoms. Symmetry of $Y$ makes this a
coupling with the required target law. The case $A=0$ is immediate.

Now let $X\sim\mu$, where $\mu$ is centrally symmetric. For
$\theta\in\mathbb S^1$, the projected targets are
\[
    \nu_1^\theta
    =\frac12\bigl(\delta_{-|a\cdot\theta|}
                  +\delta_{|a\cdot\theta|}\bigr),
    \qquad
    \nu_2^\theta
    =\frac12\bigl(\delta_{-|b\cdot\theta|}
                  +\delta_{|b\cdot\theta|}\bigr).
\]
Applying \eqref{eq:cs-symmetric-two-point-cost} to $Y=X\cdot\theta$
and integrating over $\theta$ gives
\begin{equation}
\label{eq:cs-pointwise-functional}
    J(\mu)
    =\frac12+\int_{\mathbb R^2}
       \left(\frac{|x|^2}{2}-H(x)\right)\,d\mu(x),
\end{equation}
where
\begin{equation}
\label{eq:cs-H-definition}
    H(x):=\int_{\mathbb S^1}
       \bigl(|a\cdot\theta|+|b\cdot\theta|\bigr)
       |x\cdot\theta|\,d\sigma(\theta).
\end{equation}
Here we used $|a|=|b|=1$ and
$
    \int_{\mathbb S^1}(x\cdot\theta)^2\,d\sigma(\theta)
    =\frac{|x|^2}{2}.
$

The function $H$ is even, nonnegative, and positively homogeneous.
Writing $x=ru$ with $r\geq0$ and $u\in\mathbb S^1$, we have
\[
    \frac{|x|^2}{2}-H(x)
    =\frac{r^2}{2}-rH(u)
    =\frac12\bigl(r-H(u)\bigr)^2-\frac12H(u)^2.
\]
For each fixed $u$, this is uniquely minimized at $r=H(u)$.
Thus the global minimizers of the pointwise integrand in
\eqref{eq:cs-pointwise-functional} occur in directions maximizing
$H$ on $\mathbb S^1$.

\noindent\emph{Step 1: Computation of the angular kernel.}
For unit vectors $u,v\in\mathbb S^1$, let
$\delta\in[0,\pi/2]$ be the smaller angle between the unoriented lines
they span, and set
\[
    \kappa(\delta):=\int_{\mathbb S^1}
        |u\cdot\theta|\,|v\cdot\theta|\,d\sigma(\theta).
\]
By rotation invariance and the invariance under replacing either
vector by its negative,
\[
    \kappa(\delta)
    =\frac{1}{2\pi}\int_0^{2\pi}
        |\cos\varphi|\,|\cos(\varphi-\delta)|\,d\varphi.
\]
The integrand is $\pi$-periodic. For $0\leq\delta\leq\pi/2$,
removing the absolute values therefore gives
\begin{align*}
    \pi\kappa(\delta)
    &=\int_0^\pi\cos\varphi\cos(\varphi-\delta)\,d\varphi
      -2\int_{\pi/2}^{\pi/2+\delta}
         \cos\varphi\cos(\varphi-\delta)\,d\varphi=\frac\pi2\cos\delta
      -\bigl(\delta\cos\delta-\sin\delta\bigr).
\end{align*}
Consequently,
\begin{equation}
\label{eq:cs-kappa-formula}
    \kappa(\delta)
    =\frac1\pi\left[
        \sin\delta+\left(\frac\pi2-\delta\right)\cos\delta
      \right],
    \qquad 0\leq\delta\leq\frac\pi2.
\end{equation}
In particular,
\begin{equation}
\label{eq:cs-kappa-derivative}
    \kappa'(\delta)
    =-\frac1\pi\left(\frac\pi2-\delta\right)\sin\delta<0,
    \qquad 0<\delta<\frac\pi2.
\end{equation}
Thus $\kappa$ is strictly decreasing on $[0,\pi/2]$.

\noindent\emph{Step 2: Computation of $H(e_1)$.}
Both $a$ and $b$ make angle $\pi/6$ with $e_1$. Hence
\begin{align*}
    H(e_1)
    &=2\kappa\!\left(\frac\pi6\right)=\frac2\pi\left[
        \frac12+\frac\pi3\frac{\sqrt3}{2}
      \right]
     =\frac1\pi+\frac{\sqrt3}{3}
     =M.
\end{align*}

\noindent\emph{Step 3: $H$ is maximized only at $\pm e_1$.}
Write $u_\alpha=(\cos\alpha,\sin\alpha)$ and set $\beta:=\pi/6$.
Since $H$ is even and invariant under reflection across the $x$-axis,
it suffices to consider $\alpha\in[0,\pi/2]$.

If $0\leq\alpha\leq\beta$, the relevant line angles are
$\beta-\alpha$ and $\beta+\alpha$, so
$
    H(u_\alpha)=\kappa(\beta-\alpha)+\kappa(\beta+\alpha).
$
Using \eqref{eq:cs-kappa-formula} and simplifying gives
\begin{equation}
\label{eq:cs-angular-comparison}
    \pi\bigl(H(u_\alpha)-H(e_1)\bigr)
    =\alpha\sin\alpha
       -\left(1+\frac\pi{\sqrt3}\right)(1-\cos\alpha).
\end{equation}
For $0<\alpha\leq\beta$,
$
    \frac{\alpha\sin\alpha}{1-\cos\alpha}
    =\alpha\cot\frac\alpha2<2,
$
because $\tan t>t$ for $0<t<\pi/2$. Since
$1+\pi/\sqrt3>2$, equation \eqref{eq:cs-angular-comparison} implies
$
    H(u_\alpha)<H(e_1)$,
    $0<\alpha\leq\frac\pi6.$ 

If $\beta\leq\alpha\leq\pi/3$, the relevant line angles are
$\alpha-\beta$ and $\alpha+\beta$. Therefore
$
    H(u_\alpha)=\kappa(\alpha-\beta)+\kappa(\alpha+\beta),
$ 
and, for $\beta<\alpha<\pi/3$,
$$
    \frac{d}{d\alpha}H(u_\alpha)
    =\kappa'(\alpha-\beta)+\kappa'(\alpha+\beta)<0.
$$
Thus $H(u_\alpha)$ is strictly decreasing on $[\beta,\pi/3]$
and remains strictly below $H(e_1)$.

Finally, if $\pi/3\leq\alpha\leq\pi/2$, the relevant line angles are
$
    \alpha-\beta\geq\beta,$ 
    $\pi-\alpha-\beta>\beta.
$ 
Both belong to $[0,\pi/2]$, so strict monotonicity of $\kappa$ yields
\[
    H(u_\alpha)
    =\kappa(\alpha-\beta)+\kappa(\pi-\alpha-\beta)
    <2\kappa(\beta)=H(e_1).
\]
It follows that
$
    \operatorname*{arg\,max}_{u\in\mathbb S^1}H(u)
    =\{e_1,-e_1\}.
$

Consequently, the pointwise function $|x|^2/2-H(x)$ is minimized
precisely at $x=\pm Me_1$, where its value is $-M^2/2$.
By \eqref{eq:cs-pointwise-functional}, the unique centrally symmetric
minimizer is
\[
    \mu_* =\frac12\delta_{Me_1}+\frac12\delta_{-Me_1}.
\]
The symmetrization argument shows that this is also a global
minimizer over all of $\mathcal P_2(\mathbb R^2)$.

Every vertex of $K$ has first coordinate $\pm\sqrt3/2$, so
\[
    x\in K\quad\Longrightarrow\quad |x_1|\leq\frac{\sqrt3}{2}.
\]
On the other hand,
$
    M-\frac{\sqrt3}{2}
    =\frac1\pi-\frac{\sqrt3}{6}>0,
$
where we used $\pi<2\sqrt3$. Therefore $\pm Me_1\notin K$, and
$\mu_*\notin\mathcal P(K)$.

Suppose, finally, that an unrestricted minimizer $\widetilde\mu$
were supported on $K$. Since $K=-K$, its symmetrization
$\widetilde\mu^{\mathrm{sym}}$ would also be supported on $K$.
Moreover,
\[
    J(\widetilde\mu^{\mathrm{sym}})
    \leq J(\widetilde\mu)
    =\inf_{\eta\in\mathcal P_2(\mathbb R^2)}J(\eta),
\]
so $\widetilde\mu^{\mathrm{sym}}$ would be a centrally symmetric
global minimizer. It would therefore equal $\mu_*$, contradicting
$\mu_*\notin\mathcal P(K)$. Hence no unrestricted SW barycenter
is supported on $K$.
\end{proof}

\section{Well-Known Properties and Identities}

\begin{lemma}
    The quantity $SW_2(\cdot,\cdot)$ defines a true metric on $\mathcal{P}_2(\mathbb R^d)$.
\end{lemma}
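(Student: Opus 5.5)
The plan is to check the metric axioms one at a time, using only that $W_2$ is a metric on $\mathcal P_2(\mathbb R)$ and that a measure is determined by its one-dimensional projections.

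\emph{Finiteness.} For $\nu,\mu\in\mathcal P_2(\mathbb R^d)$ and every $\theta\in\sphere$, the projection $P_\theta$ is $1$-Lipschitz. Hence $\mu^\theta,\nu^\theta\in\mathcal P_2(\mathbb R)$ and $W_2(\mu^\theta,\nu^\theta)\le W_2(\mu,\nu)<\infty$. Measurability of $\theta\mapsto W_2(\mu^\theta,\nu^\theta)$ follows from the quantile representation $W_2^2(\mu^\theta,\nu^\theta)=\int_0^1|Q_{\mu^\theta}(q)-Q_{\nu^\theta}(q)|^2dq$; alternatively, $W_2$ is weakly continuous and $\theta\mapsto\mu^\theta$ is continuous. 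Therefore $SW_2(\mu,\nu)\le W_2(\mu,\nu)$ is finite and nonnegative.

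\emph{Symmetry and triangle inequality.} Symmetry is inherited slicewise from $W_2$. For the triangle inequality, apply the one-dimensional triangle inequality $W_2(\mu^\theta,\rho^\theta)\le W_2(\mu^\theta,\eta^\theta)+W_2(\eta^\theta,\rho^\theta)$ on each slice. Then take the $L^2(\sphere,\sigma)$ norm of both sides and use Minkowski's inequality in $L^2(\sigma)$. This gives $SW_2(\mu,\rho)\le SW_2(\mu,\eta)+SW_2(\eta,\rho)$.

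\emph{Identity of indiscernibles.} This is the only step with real content. Clearly $SW_2(\mu,\mu)=0$. Conversely, suppose $SW_2(\mu,\nu)=0$. Then $W_2(\mu^\theta,\nu^\theta)=0$, so $\mu^\theta=\nu^\theta$, for $\sigma$-a.e. $\theta$. It follows that the characteristic functions satisfy $\hat\mu(t\theta)=\widehat{\mu^\theta}(t)=\widehat{\nu^\theta}(t)=\hat\nu(t\theta)$ for all $t\in\mathbb R$ and a.e. $\theta$.

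The obstacle is that this equality is only known for almost every direction. To pass to every direction, note that the map $\theta\mapsto\mu^\theta$ is weakly continuous, and the set of good $\theta$ is dense, since its complement is $\sigma$-null and $\sigma$ has full support. Hence $\mu^\theta=\nu^\theta$ for every $\theta$. Equivalently, $\hat\mu$ and $\hat\nu$ are continuous and agree on a dense subset of $\mathbb R^d$, so they agree everywhere. The Cramér--Wold theorem (uniqueness of Fourier transforms) then gives $\mu=\nu$.
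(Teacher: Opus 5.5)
Your proof is correct and follows the same route as the paper: slicewise symmetry, the slicewise triangle inequality combined with Minkowski in $L^2(\sigma)$, and a.e.-in-$\theta$ equality of projections upgraded to all directions and then to $\mu=\nu$ via Cram\'er--Wold; you also supply the finiteness bound $SW_2\le W_2$ and the a.e.-to-every-direction argument, which the paper leaves implicit. One minor slip: $W_2$ is only weakly lower semicontinuous, not weakly continuous, but measurability still follows, either from that lower semicontinuity or from the $W_2$-continuity of $\theta\mapsto\mu^\theta$ given by $W_2(\mu^\theta,\mu^{\theta'})\le|\theta-\theta'|\bigl(\int|x|^2\,d\mu\bigr)^{1/2}$.
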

\begin{proof}
    It is symmetric by being defined as averages of 1D Wasserstein distances, which are symmetric. The triangle inequality follows from the triangle inequality for the 1D Wasserstein distances together with Minkowski’s inequality. Finally, the identity of indiscernibles follows from the corresponding property of the 1D Wasserstein distances involved in definition \eqref{eq: SW}, together with either the Fourier Slice Theorem or the Cramér-Wold Theorem.
\end{proof}

\begin{lemma}[Flat convexity of the SW distance]
Let $\nu_1,\nu_2,\mu \in \mathcal{P}_2(\mathbb{R}^d)$, and define the convex combination
$$
\nu_t=t\nu_1+(1-t)\nu_2,\qquad t\in[0,1].
$$
Then, the squared SW distance is convex in each argument: for $0\leq t\leq 1$,
\begin{align*}
SW_2(\nu_t,\mu)^2
&\leq t\,SW_2(\nu_1,\mu)^2+(1-t)\,SW_2(\nu_2,\mu)^2, \, \text{ and}
\end{align*}
\begin{align*}
SW_2(\mu,\nu_t)^2
&\leq t\,SW_2(\mu,\nu_1)^2+(1-t)\,SW_2(\mu,\nu_2)^2.
\end{align*}
\end{lemma}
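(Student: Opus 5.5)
The approach is to reduce both inequalities to the corresponding one-dimensional statement slice by slice, and then integrate over the sphere. Two facts make this work: the projection $P_\theta$ commutes with linear mixtures, and one-dimensional $W_2^2$ is convex under linear mixtures in each argument.

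\emph{Step 1 (projection commutes with mixtures).} First I will check that $(P_\theta)_\#\nu_t = t\,\nu_1^\theta+(1-t)\,\nu_2^\theta$. This follows directly from the definition of pushforward: for any Borel $A\subset\R$,
\[
\nu_t(P_\theta^{-1}(A))=t\,\nu_1(P_\theta^{-1}(A))+(1-t)\,\nu_2(P_\theta^{-1}(A)).
\]
I will also note that $\nu_t\in\mathcal P_2(\R^d)$, since its second moment is the corresponding convex combination of the second moments of $\nu_1$ and $\nu_2$.

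\emph{Step 2 (one-dimensional convexity).} The key step is to show that, for $\alpha_1,\alpha_2,\beta\in\mathcal P_2(\R)$,
\[
W_2^2(t\alpha_1+(1-t)\alpha_2,\beta)\le t\,W_2^2(\alpha_1,\beta)+(1-t)\,W_2^2(\alpha_2,\beta).
\]
I will use the mixing-of-couplings argument already invoked in the proof of Example~\ref{prop:SW-convex-support-counterexample}. Take optimal plans $\pi_i\in\Pi(\alpha_i,\beta)$, which exist by compactness of the set of couplings. The mixture $\pi:=t\pi_1+(1-t)\pi_2$ has first marginal $t\alpha_1+(1-t)\alpha_2$ and second marginal $t\beta+(1-t)\beta=\beta$, so it is admissible. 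Its cost is $\int|s-r|^2\,d\pi=t\,W_2^2(\alpha_1,\beta)+(1-t)\,W_2^2(\alpha_2,\beta)$, and this bounds $W_2^2$ of the mixture from above. The inequality in the other argument then follows from the symmetry of $W_2$.

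\emph{Step 3 (integrate over the sphere).} Apply Step 2 with $\alpha_i=\nu_i^\theta$ and $\beta=\mu^\theta$ for each $\theta\in\sphere$. Then integrate against $\sigma$, using linearity of the integral and the definition \eqref{eq: SW}. Measurability of $\theta\mapsto W_2^2(\nu_i^\theta,\mu^\theta)$ is implicit in the definition of $SW_2$, and all integrands are bounded by second moments. The second displayed inequality follows from the first by the symmetry of $SW_2$.

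I do not expect a real obstacle here. The only points that need care are the existence of optimal couplings, or alternatively an $\varepsilon$-optimal version of the argument, and the measurability in $\theta$; both are standard.
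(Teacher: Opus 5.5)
Your proposal is correct and follows the paper's proof: project the mixture direction by direction, use convexity of one-dimensional $W_2^2$ under linear mixtures, integrate over $\sphere$, and obtain the second inequality by symmetry. The only difference is that you prove the one-dimensional convexity explicitly by mixing optimal couplings, whereas this proof simply asserts it (the paper spells out the same coupling-mixing argument only in Example~\ref{prop:SW-convex-support-counterexample}).
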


\begin{proof} We refer the reader to \cite{nguyen2025introduction} and we reproduce the proof here for completeness. 

For each direction $\theta \in \mathbb{S}^{d-1}$, we have the projected linear combination
$
\nu_t^\theta
=
t\,\nu^\theta_1
+
(1-t)\,\nu^\theta_2$,
and one can use that $W_2^2$ is convex in one of its arguments, that is,
$$
W_2(\nu^\theta_t,\mu^\theta)^2
\leq
t\,W_2(\nu^\theta_1,\mu^\theta)^2
+
(1-t)\,W_2(\nu^\theta_2,\mu^\theta)^2.
$$
Finally,
\begin{align*}
SW_2(\nu_t,\mu)^2
&=\int_{\sphere} W_2(\nu_t^\theta,\mu^\theta)^2\,d\sigma(\theta)\\
&\leq t\int_{\sphere}W_2(\nu_1^\theta,\mu^\theta)^2\,d\sigma(\theta) +(1-t)\int_{\sphere}W_2(\nu_2^\theta,\mu^\theta)^2\,d\sigma(\theta)\\
&=t\,SW_2(\nu_1,\mu)^2+(1-t)\,SW_2(\nu_2,\mu)^2.
\end{align*}
So, $\nu\mapsto \SW(\nu,\mu)^2$ is convex under ordinary linear mixtures of measures . By symmetry, the same statement holds in the second argument. 
\end{proof}

\begin{lemma}
Let $\sigma$ be the uniform measure on the $\sphere$. Then
$$\int_{\sphere}(x\cdot\theta)\theta\,d\sigma(\theta)=\frac{x}{d}, \qquad x\in \mathbb R^d.$$
\end{lemma}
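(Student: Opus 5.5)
The identity follows from \eqref{eq: Id/d} by linearity of the integral, so the only real content is the second-moment identity itself. First, for fixed $x\in\R^d$, I rewrite the integrand as $(x\cdot\theta)\theta=\theta\theta^\top x$. Since $x$ does not depend on $\theta$, it factors out of the (componentwise, finite-dimensional) integral:
\[
\int_{\sphere}(x\cdot\theta)\theta\,d\sigma(\theta)=\Bigl(\int_{\sphere}\theta\theta^\top\,d\sigma(\theta)\Bigr)x .
\]
It therefore suffices to show that $M:=\int_{\sphere}\theta\theta^\top\,d\sigma(\theta)=\frac1d\Id$.

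To prove this without quoting \eqref{eq: Id/d} circularly, I use rotation invariance of $\sigma$. For any $Q\in\mathrm O(d)$, the change of variables $\eta=Q\theta$ gives $QMQ^\top=M$. Taking $Q$ to be coordinate permutations shows that all diagonal entries of $M$ are equal. Taking $Q$ to be the reflection $\theta_i\mapsto-\theta_i$ shows that $M_{ik}=-M_{ik}$ for $i\neq k$, so the off-diagonal entries vanish. Hence $M=c\,\Id$ for some constant $c$.

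The trace fixes the constant: $\operatorname{Tr}M=\int_{\sphere}|\theta|^2\,d\sigma(\theta)=1$, because $\sigma$ is a probability measure on unit vectors. This gives $dc=1$, so $c=\frac1d$, and therefore $Mx=\frac{x}{d}$.

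There is no real obstacle here. The only point needing care is justifying the symmetry step: $\sigma$ must be invariant under the full orthogonal group, including reflections and permutations, and not merely under rotations. This holds for the normalized surface measure.
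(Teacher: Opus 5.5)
Your proof is correct, but it reaches the identity by a different mechanism than the paper.

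The paper works with the quadratic form $x\mapsto\int_{\sphere}(x\cdot\theta)^2\,d\sigma(\theta)$:
\begin{itemize}
\item it chooses a rotation $R$ with $Rx=\|x\|e_1$ to show that this form equals $\|x\|^2\int_{\sphere}|\theta\cdot e_1|^2\,d\sigma(\theta)$;
\item it fixes the constant $\frac1d$ by summing over coordinates, which is exactly your trace step;
\item it recovers the vector identity by polarization, testing against an arbitrary $y$.
\end{itemize}

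You instead factor $x$ out, which reduces everything to the single matrix identity $\int_{\sphere}\theta\theta^\top\,d\sigma(\theta)=\frac1d\Id$. You then force $M$ to be scalar using only invariance under coordinate permutations and sign changes, so no polarization is needed.

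Your route is more economical. Because it uses only a finite symmetry group, the same argument gives $\int\theta\theta^\top\,d\rho(\theta)=\frac1d\Id$ for any probability measure $\rho$ on $\sphere$ that is invariant under signed permutations, for example $\frac{1}{2d}\sum_{i}(\delta_{e_i}+\delta_{-e_i})$. This is relevant to symmetric finite direction sets.

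The paper's route relies on transitivity of rotations on spheres, which you do not need. In exchange, it yields the radial identity $\int_{\sphere}|\theta\cdot x|^p\,d\sigma(\theta)=\|x\|^p\int_{\sphere}|\theta\cdot e_1|^p\,d\sigma(\theta)$ for every $p$. Signed-permutation invariance alone cannot give this for $p\neq 2$: the example $\rho$ above produces $\frac1d\sum_i|x_i|^p$.
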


\begin{proof}
    Given $x\in \mathbb R^d$, choose a rotation $R$ such that $Rx=\|x\|e_1$, where $e_1=(1,0,\dots, 0)$ is the first canonical vector in $\mathbb R^d$. Then $ \theta \cdot x=(R\theta) \cdot (Rx)$ and, since $\sigma$ is rotation-invariant, for any $1\leq p<\infty$
\begin{align*}
\int_{\mathbb S^{d-1}}|\theta\cdot x|^p\,d\sigma(\theta)
&=\|x\|^p\int_{\mathbb S^{d-1}}|(R\theta)\cdot e_1|^p\,d\sigma(\theta)=\|x\|^p\int_{\mathbb S^{d-1}}|\theta\cdot e_1|^p\,d\sigma(\theta).
\end{align*}
    In particular, if $p=2$, then $1=
    \int_{\mathbb{S}^{d-1}}\sum_{j=1}^d|\theta\cdot e_j|^2 d\sigma(\theta)$, so $\frac{1}{d}=
    \int_{\mathbb{S}^{d-1}}|\theta \cdot e_1|^2 d\sigma(\theta)$. 
Thus, for every $x\in\mathbb R^d$,
$
\int_{\sphere}|x\cdot\theta|^2\,d\sigma(\theta)
=\frac{\|x\|^2}{d}.
$
By polarization, for every $y\in\mathbb R^d$,
\begin{align*}
\left(\int_{\sphere}(x\cdot\theta)\theta\,d\sigma(\theta)\right)
\cdot y
&=\int_{\sphere}(x\cdot\theta)(y\cdot\theta)\,d\sigma(\theta)=\frac14\int_{\sphere}
\left(|(x+y)\cdot\theta|^2-|(x-y)\cdot\theta|^2\right)
\,d\sigma(\theta)\\
&=\frac{\|x+y\|^2-\|x-y\|^2}{4d}
=\frac{x\cdot y}{d}.
\end{align*}
Since this holds for every $y\in\mathbb R^d$, we conclude that
$
\int_{\sphere}(x\cdot\theta)\theta\,d\sigma(\theta)=\frac{x}{d}.
$
\end{proof}

\begin{lemma}[1D Gelbrich Inequality \cite{gelbrich1990formula, dowson1982frechet, olkin1982distance}]\label{lem: Gelbrich}
Let $\eta\in\mathcal P_2(\mathbb R)$ have mean $a$ and variance $\alpha^2$, and let $\zeta=\mathcal N(b,\beta^2)$ with $\alpha,\beta\geq0$. Then
$$
    W_2^2(\eta,\zeta)
    \geq
    (a-b)^2+(\alpha-\beta)^2.
$$
If $\beta>0$, equality holds if and only if $\eta=\mathcal N(a,\alpha^2)$. If $\beta=0$, then $\zeta=\delta_b$ and equality holds for every $\eta\in\mathcal P_2(\mathbb R)$ with mean $a$ and variance $\alpha^2$.
\end{lemma}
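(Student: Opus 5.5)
The plan is to reduce the statement to an explicit one-dimensional computation on a probability space and then treat the equality case separately. First I will fix a coupling $(Y,Z)$ with $Y\sim\eta$ and $Z\sim\zeta$, and write $Y=a+\alpha\widetilde Y$ and $Z=b+\beta\widetilde Z$, where $\widetilde Y,\widetilde Z$ are centered with unit variance (when $\alpha=0$ or $\beta=0$ the corresponding variable is constant and this step degenerates harmlessly). Expanding the square gives
\[
\mathbb E|Y-Z|^2=(a-b)^2+\alpha^2+\beta^2-2\,\mathrm{Cov}(Y,Z).
\]
By Cauchy--Schwarz, $\mathrm{Cov}(Y,Z)\le\alpha\beta$. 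Taking the infimum over couplings then yields $W_2^2(\eta,\zeta)\ge(a-b)^2+(\alpha-\beta)^2$. This inequality does not use Gaussianity.

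For the equality case with $\beta>0$, I will use that in 1D the optimal coupling exists and is the quantile coupling $Y=Q_\eta(U)$, $Z=Q_\zeta(U)$ with $U\sim\mathrm{Unif}[0,1]$. Equality forces $\mathrm{Cov}(Y,Z)=\alpha\beta$ for this optimal coupling, i.e.\ equality in Cauchy--Schwarz. If $\alpha>0$, this gives $Y-a=\tfrac{\alpha}{\beta}(Z-b)$ almost surely, so $Y$ is an affine image of a Gaussian with positive slope, and hence $\eta=\mathcal N(a,\alpha^2)$. If $\alpha=0$, then $\eta=\delta_a=\mathcal N(a,0)$ is immediate. Conversely, if $\eta=\mathcal N(a,\alpha^2)$, the map $s\mapsto a+\tfrac{\alpha}{\beta}(s-b)$ is monotone and pushes $\zeta$ to $\eta$. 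It is therefore optimal and attains the bound, which recovers the closed form used in Lemma~\ref{lem: another formula for F_G}.

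For $\beta=0$ we have $\zeta=\delta_b$, and the only coupling is the product. Then $W_2^2=\mathbb E|Y-b|^2=(a-b)^2+\alpha^2$, which equals the right-hand side for every $\eta$ with the given mean and variance.

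The main obstacle is the equality step. I will need to make sure the argument uses an optimal coupling, which exists by compactness or by the explicit quantile construction, rather than an infimizing sequence, so that equality in Cauchy--Schwarz can be invoked for a single coupling.
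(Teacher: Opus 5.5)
Your proposal is correct and follows essentially the same route as the paper: expand $\mathbb E|Y-Z|^2=(a-b)^2+\alpha^2+\beta^2-2\operatorname{Cov}(Y,Z)$, bound the covariance by Cauchy--Schwarz, obtain the equality case from equality in Cauchy--Schwarz under the optimal quantile coupling, use the affine monotone map for the converse, and treat the Dirac case $\beta=0$ directly. The only cosmetic differences are that you prove the inequality for an arbitrary coupling before taking the infimum, whereas the paper works with the quantile coupling from the start, and that you split off $\alpha=0$ explicitly in the equality case.
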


\begin{proof}
If $\beta=0$, then $\zeta=\delta_b$ and
$
W_2^2(\eta,\delta_b)=\int_{\mathbb R}|x-b|^2\,d\eta(x)
=(a-b)^2+\alpha^2=(a-b)^2+(\alpha-\beta)^2,
$
so the asserted statement is immediate. We may therefore assume $\beta>0$.

Let $Q_\eta$ and $Q_\zeta$ denote the quantile functions of $\eta$ and $\zeta$.
In one dimension,
$$
    W_2^2(\eta,\zeta)
    =
    \int_0^1
    \bigl|Q_\eta(t)-Q_\zeta(t)\bigr|^2\,dt.
$$
Let $U\sim \operatorname{Unif}(0,1)$, and define $X:=Q_\eta(U)$, $Y:=Q_\zeta(U).$
Then $X\sim\eta$, $Y\sim\zeta$, and
$W_2^2(\eta,\zeta)
    =
    \mathbb E|X-Y|^2$. Since $\zeta=\mathcal N(b,\beta^2)$, we may write $Y=b+\beta \Phi^{-1}(U)$,
where $\Phi$ is the standard Gaussian cumulative distribution function.
Expanding the square gives
\begin{align*}
    \mathbb E|X-Y|^2
    &=
    \mathbb E\bigl|(X-a)-(Y-b)+(a-b)\bigr|^2 \\
    &=
    (a-b)^2
    +
    \mathbb E|X-a|^2
    +
    \mathbb E|Y-b|^2
    -
    2\mathbb E[(X-a)(Y-b)] \\
    &=
    (a-b)^2+\alpha^2+\beta^2
    -
    2\operatorname{Cov}(X,Y).
\end{align*}
By Cauchy--Schwarz,
$
    \operatorname{Cov}(X,Y)
    =
    \mathbb E[(X-a)(Y-b)]
    \leq
    \alpha\beta.
$
Therefore,
$
    W_2^2(\eta,\zeta)
    \geq
    (a-b)^2+\alpha^2+\beta^2-2\alpha\beta
    =
    (a-b)^2+(\alpha-\beta)^2.
$

Now assume equality holds. Then equality must hold in Cauchy--Schwarz. Since
$\beta>0$, this implies that $X-a$ is a scalar multiple of $Y-b$, namely
$
    X-a=\frac{\alpha}{\beta}(Y-b)
    \text{ a.s.}
$
Thus $ X
    =
    a+\frac{\alpha}{\beta}(Y-b)$, and since $Y$ is Gaussian, $X$ is Gaussian as well. Hence, the law of $X$ is $\eta=\mathcal N(a,\alpha^2)$.
    
Conversely, if $\eta=\mathcal N(a,\alpha^2)$, then the monotone optimal coupling
between $\eta$ and $\zeta$ is affine, i.e., 
$
    X=a+\frac{\alpha}{\beta}(Y-b),
$
and direct computation gives
$ W_2^2(\eta,\zeta)
    =
    (a-b)^2+(\alpha-\beta)^2$. The case $\alpha=0$ is included: then $\eta=\mathcal N(a,0)=\delta_a$.
\end{proof}

\section{Algorithms and Experiments}\label{sec: synt}

\paragraph{Code availability.}
An anonymized implementation is available at:

\noindent  \url{https://anonymous.4open.science/r/sw-barycenters-analysis}.  

\paragraph{Computing resources.}
For all the figures and tables in this work, all experiments used Python with NumPy and SciPy on an
Intel Core Ultra 7 165U CPU under Windows~11, without GPU
acceleration.

\paragraph{Dataset licenses.}
ModelNet-C \citep{ren2022benchmark} is distributed under
PointCloud-C's non-commercial-use terms, which require
attribution.\footnote{\url{https://pointcloud-c.github.io/download.html}}
The underlying ModelNet40 collection \citep{wu20153d} is
provided for academic research, with copyright retained by
the original CAD-model authors.\footnote{\url{https://modelnet.cs.princeton.edu/download.html}}

We also utilize Point Cloud MNIST \citep{garcia2021pointcloudmnist},
a point-cloud representation of the MNIST handwritten-digit dataset
\citep{lecun1998gradient}, comprising ten digit classes.
The Point Cloud MNIST conversion repository
\citep{garcia2021pointcloudmnist} carries an MIT
license.\footnote{\url{https://huggingface.co/datasets/cgarciae/point-cloud-mnist/blob/97cc82c6aa94495ad2ba378d8716f274ae04c092/LICENSE}}

\subsection{Experimental Specifications in Figure \ref{fig:pointcloud-analysis-diagnostics}}\label{app:classif}

For classification in Figure \ref{fig:pointcloud-analysis-diagnostics}, both datasets use nested dictionaries with three
or five references per class and $L=32$ directions.
ModelNet-C uses ten repeated classwise 80/20 splits, with references
selected from the training pools and the same 100 test objects
evaluated clean and under five corruptions, yielding 600
query--condition pairs per trial. Test objects and directions are
shared across dictionary sizes within each trial.
Point Cloud MNIST uses ten training-dictionary draws and a fixed
balanced subset of 1,000 official-test digits; both this subset and
the directions remain fixed across draws and dictionary sizes.

In the classification experiments, the global coordinate quadratic programming (QP)
and the class-restricted QPs used to compute the rejection score
are solved by an active-set method with stationarity-gap
tolerance $10^{-9}$ and a feasible Frank--Wolfe fallback.
Residual rankings are recomputed for each dictionary size, while
both classifiers retain the same queries within each panel.
Pointwise 95\% Student-$t$ intervals are computed over the ten trials;
MNIST intervals are conditional on its fixed test subset and
directions. 

\paragraph{Wall-clock runtime (Figure \ref{fig:pointcloud-analysis-diagnostics}).} For the classification experiments, encoding and diagnostics
averaged $56.1$ and $50.1$\,ms per query on ModelNet-C with
15 and 25 references, respectively. These timings were obtained
in separate runs. 
For each MNIST query, we computed the Gram matrix and SW distances
using all 50 references once, and reused the corresponding entries
for the 30-reference experiment. The shared computation
of the 50-reference Gram matrix and SW distances took
$22.4$\,ms per query, followed by $6.3$ or $8.3$\,ms for
coordinate estimation and class-residual diagnostics with
30 or 50 references, respectively. These measurements exclude
data loading and dictionary preparation.

\subsection{Synthesis Algorithm}\label{app:synth}

We emphasize that the synthesis part of our approach is closely related to the sliced Wasserstein barycenter framework in \cite{boneel}. Both approaches rely on the same fundamental observation: after projecting the measures onto one-dimensional directions, the corresponding OT problems become explicit through monotone rearrangements. Thus, from a computational viewpoint, both methods use the ingredients
    $\theta\in\mathbb S^{d-1},\, 
    \mu^\theta=(P_\theta)_\#\mu, \, 
    \nu_j^\theta=(P_\theta)_\#\nu_j$,
together with the one-dimensional OT maps
$ T_{j,\theta}^\mu=T_{\mu^\theta\to\nu_j^\theta}.
$
In particular, any particle implementation of our synthesis procedure involves sampling directions, projecting the particles, sorting along each direction, computing one-dimensional transport maps, and averaging the resulting displacement fields over directions. This is the main computational overlap with \cite{boneel}.
In particular, after a discretization,
our synthesis scheme becomes a particle descent for the same sliced barycenter
energy considered in~\cite{boneel}. Thus the difference is not the use of
one-dimensional optimal transports or the projection machinery. Rather, the approaches differ on the points of view used to perform the corresponding  derivations, as noted in the main text. 

\paragraph{Particle Discretization and Discrete Gradient.}
We represent the candidate measure $\mu$ by an empirical measure
\begin{equation*}
\mu_X = \frac1m \sum_{k=1}^m \delta_{X_k},
\quad
X=(X_1,\dots,X_m)\in (\R^d)^m.
\end{equation*}
Likewise, each target $\nu_j$ is approximated by a point cloud
$Y^{(j)}=(Y^{(j)}_1,\dots,Y^{(j)}_{m_j})$.

For a finite set of directions $\Theta=\{\theta_1,\dots,\theta_L\}\subset \sphere$, we approximate
the integral over the sphere by an average:
\begin{equation}\label{eq: empirical direction}
U_j^\mu(x)
\approx U_j^\Theta(x)
:=
\frac1L \sum_{\theta\in\Theta}
\bigl(x\cdot \theta - T_{j,\theta}^\mu(x\cdot\theta)\bigr)\theta,
\end{equation}
where, in one dimension, the transport maps $ T_{j,\theta}^\mu$ are approximated by sorting the projected samples and matching their empirical quantiles or monotone rearrangements. We abbreviate such monotone quantile matching by $\text{MQM}$.

We note that the empirical measures are atomic, whereas Proposition \ref{thm: gradient} is stated under non-atomicity of the projected source. The formulas in this section should therefore be understood as a numerical discretization. 
We also note that, for generic point clouds and randomly sampled directions, projected ties occur with probability zero. 

Evaluating formula \eqref{eq: grad of sw bary} at the particles $X_k$ gives the discrete gradient field
\begin{equation}\label{eq: avg discrete}
\sum_{j=1}^N  \lambda_j U_j^\Theta(X_k),
\qquad k=1,\dots,m.
\end{equation}
The resulting Euler step is
\begin{equation}\label{eq: Euler ite}
X_k^{(\ell+1)}
=
X_k^{(\ell)} - \tau_\ell\, \sum_{j=1}^N  \lambda_j U_j^\Theta(X_k^{(\ell)}).
\end{equation}
In practice, we use either a fixed step size $\tau_\ell=\tau$, or a mild stochastic variant in which the set of directions $\Theta_\ell$ is resampled
at each iteration. 
The Synthesis procedure, which consists of performing the iterations in \eqref{eq: Euler ite}, is explicitly given in the Appendix (see Algorithm \ref{alg: synth}). 

For Analysis, see Algorithm \ref{alg: analysis}:  For predefined barycentric templates $\{Y^{(j)}\}_{j=1}^N$, given a query point cloud $X=(X_1,\ldots,X_m)$ representing a probability measure $\mu$,  one needs to build, in the discrete particle setting, the matrix \eqref{eq: A}, that is,
\begin{equation}\label{eq: empirical matrix A}
\mathcal A_{ij}(\mu)
\approx
\frac1m \sum_{k=1}^m
\langle U_i^\Theta(X_k), U_j^\Theta(X_k)\rangle,
\end{equation}
in order to then retrieve estimated coordinates $\widehat\lambda$ of $X$ in terms of the templates through the simplex-constrained quadratic problem
\begin{equation}\label{eq: min}
\min_{\lambda\in\Delta_{N-1} } \lambda^\top \mathcal A(\mu) \lambda.
\end{equation} 
One also can retrieve a residual $\mathrm{res}_\mathrm{an}$ representing the magnitude of the empirical gradient $\left({\widehat{\lambda}^\top\mathcal{A}(\mu)\widehat{\lambda}}\right)^{\frac12}$. 

\begin{algorithm}[ht!]
\caption{Synthesis - SW Barycenter}\label{alg: synth}
\small
\begin{algorithmic}[1]
\Require Template point clouds $\{Y^{(j)}\}_{j=1}^N$, weights
$\lambda\in\Delta_{N-1}$, initial point cloud
$X^{(0)}=(X_k^{(0)})_{k=1}^m$, number of directions $L$,
step-size rule $\tau_\ell$, tolerance $\varepsilon$, and maximum
number of iterations $K$.
\Ensure A particle cloud approximating a critical point of
$\mathcal F^\lambda$.
\Statex \State $\ell\gets 0$
\Repeat
    \State Sample $\Theta_\ell=\{\theta_q\}_{q=1}^L\subset\sphere$.
    \For{$j=1,\ldots,N$}
        \For{$k=1,\ldots,m$}
            \State $U_{j,k}\gets 0\in\R^d$
        \EndFor
        \For{$q=1,\ldots,L$}
            \State $a_k\gets X_k^{(\ell)}\cdot\theta_q$ for
            $k=1,\ldots,m$
            \State $b_r\gets Y_r^{(j)}\cdot\theta_q$ for
            $r=1,\ldots,m_j$
            \State $(t_k)_{k=1}^m\gets
            \operatorname{MQM}((a_k)_{k=1}^m,(b_r)_{r=1}^{m_j})$
            \For{$k=1,\ldots,m$}
                \State $U_{j,k}\gets
                U_{j,k}+L^{-1}(a_k-t_k)\theta_q$
            \EndFor
        \EndFor
    \EndFor
    \For{$k=1,\ldots,m$}
        \State $G_k\gets\sum_{j=1}^N\lambda_jU_{j,k}$
        \State $X_k^+\gets X_k^{(\ell)}-\tau_\ell G_k$
    \EndFor
    \State $r_\ell \leftarrow
\left(\frac{1}{m}\sum_{k=1}^{m}
\|X_k^{+}-X_k^{(\ell)}\|^2\right)^{1/2}$
                \State $X^{(\ell+1)}\gets X^+$; $\ell\gets\ell+1$
\Until{$r_{\ell-1}\leq\varepsilon$ or  $\ell=K$}
\State \Return $X^{(\ell)}$
\end{algorithmic}
\end{algorithm}

\subsection{Synthesis-Analysis Experiments}\label{app:f-b}

This section provides controlled experiments on the synthesis and
analysis of barycentric models in two dimensions. The template measures
follow the examples in \cite{boneel}. 
Indeed, we use three animal silhouettes (ostrich, duck, and rabbit)
from the toolbox accompanying \citet{boneel}, and three
locally generated Gaussian-mixture templates inspired by
their examples. These examples allow us to compare how the different
barycentric constructions interpolate between the same templates and to
evaluate whether the proposed SW analysis recovers the coefficients used
to generate each distribution.

In each experiment, we use three equally weighted empirical template
measures,
  $  \nu_j=\frac{1}{m}\sum_{k=1}^{m}\delta_{x_{j,k}}$,
    $j=1,2,3$,
with $m=2000$ particles for the animal silhouettes and $m=1500$ for the
Gaussian mixtures. Each Gaussian-mixture template has two components.
We evaluate barycentric coefficients on the triangular grid
\[
        \left\{(a,b,c)/4:\ a,b,c\in\{0,1,2,3,4\},\ a+b+c=4\right\}
    \subset\Delta_2.
\]
The three vertices display the original templates, while the remaining
$12$ grid points correspond to synthesized measures. The same templates
and coefficient vectors are used for every method.

We compare the proposed SW fixed-point synthesis
(Algorithm~\ref{alg: synth}) with the
sliced-Wasserstein method with Newton updates and the Radon Wasserstein
construction of \cite{boneel}, as well as with other transport geometries, namely, classical,  linearized, and
entropy-regularized Wasserstein barycenters. A review of these different transport-based barycenter constructions is provided below.
Figures~\ref{fig:animals_synthesis} and
\ref{fig:multimod_gauss_synthesis} show the resulting distributions.
Table~\ref{tab:synthesis-runtime-summary} reports synthesis runtimes.

For each synthesized measure $\mu$, we then apply the same proposed SW
analysis procedure (Algorithm~\ref{alg: analysis}) to estimate its
coefficients $\widehat\lambda$.
We compare these estimates with the generating coefficients $\lambda$
using the $\ell^1$-error $\|\lambda-\widehat\lambda\|_1$ and evaluate the empirical
SW stationarity residual
$(\widehat\lambda^\top\widehat{\mathcal A}(\mu)\widehat\lambda)^{\frac{1}{2}}$,
together with the analysis runtime. Table~\ref{tab:synthesis-analysis-summary}
summarizes these quantities over the $12$ non-vertex coefficient vectors.
They assess coefficient
recovery and compatibility with the SW barycentric model. For measures
synthesized using a different transport geometry, a larger SW residual
or coefficient discrepancy does not, by itself, indicate an inaccurate
solution of that method's own barycenter problem. 

\paragraph{Experimental specifications.}
For the synthetic comparisons, we use 2,000 particles per cloud for
animals and 1,500 for Gaussian mixtures and the single-Gaussian 3D
example. Our SW synthesis starts from the first reference cloud and
uses $L=40$ directions with at most 35 iterations for animals, and
$L=48$ with at most 40 iterations for both Gaussian examples.
Directions are sampled uniformly from the sphere and resampled at
every iteration. The step size multiplying the averaged displacement
field is $\tau=d$, where $d$ is the ambient dimension. Iterations stop
when the root-mean-square particle displacement falls below $10^{-6}$
or the iteration limit is reached. 

To recover
coordinates from the resulting barycenters, analysis uses a
separate set of $L=256$ directions per query,
held fixed throughout the quadratic programming (QP) optimization.
Its simplex-constrained QP is solved by projected gradient with step size 
$\eta=[2\max\{\|\widehat{\mathcal A}\|_2,10^{-15}\}]^{-1}$,
where $\|\widehat{\mathcal A}\|_2$ denotes the largest eigenvalue
of the positive-semidefinite Gram matrix,
an absolute successive-iterate tolerance of $10^{-12}$ in Euclidean
norm, and a limit of $10^4$ iterations.

\paragraph{Forward-Backward.}
For visualization of the forward-backward methodology, that is, synthesis of SW barycenters (forward map) and retrieving coordinates through analysis (backward or inverse map), we include Figure \ref{fig:forward-backward}.

In  Figure \ref{fig:forward-backward}, bottom panel, we include a pointcloud experiment in 3D by utilizing  Gaussian templates.
We consider three centered, anisotropic Gaussian distributions in
$\mathbb{R}^3$ with differently oriented  covariance
matrices, each represented by $1{,}500$ independently sampled points. In Figure \ref{fig:forward-backward} bottom panel, a 2D projection is used for display.

Figure \ref{fig:forward-backward} shows synthesis and analysis for animal shapes (top),
Gaussian mixtures (middle), and  3D Gaussians (bottom). Left:
 SW fixed-point synthesis (Algorithm \ref{alg: synth}). Right:  SW synthesis from \cite{boneel}. At each simplex point we see the true 
generating weights $\lambda$, and the estimated weights
$\widehat{\lambda}$.

\begin{figure}[ht!]
    \centering
    \includegraphics[width=1\linewidth]{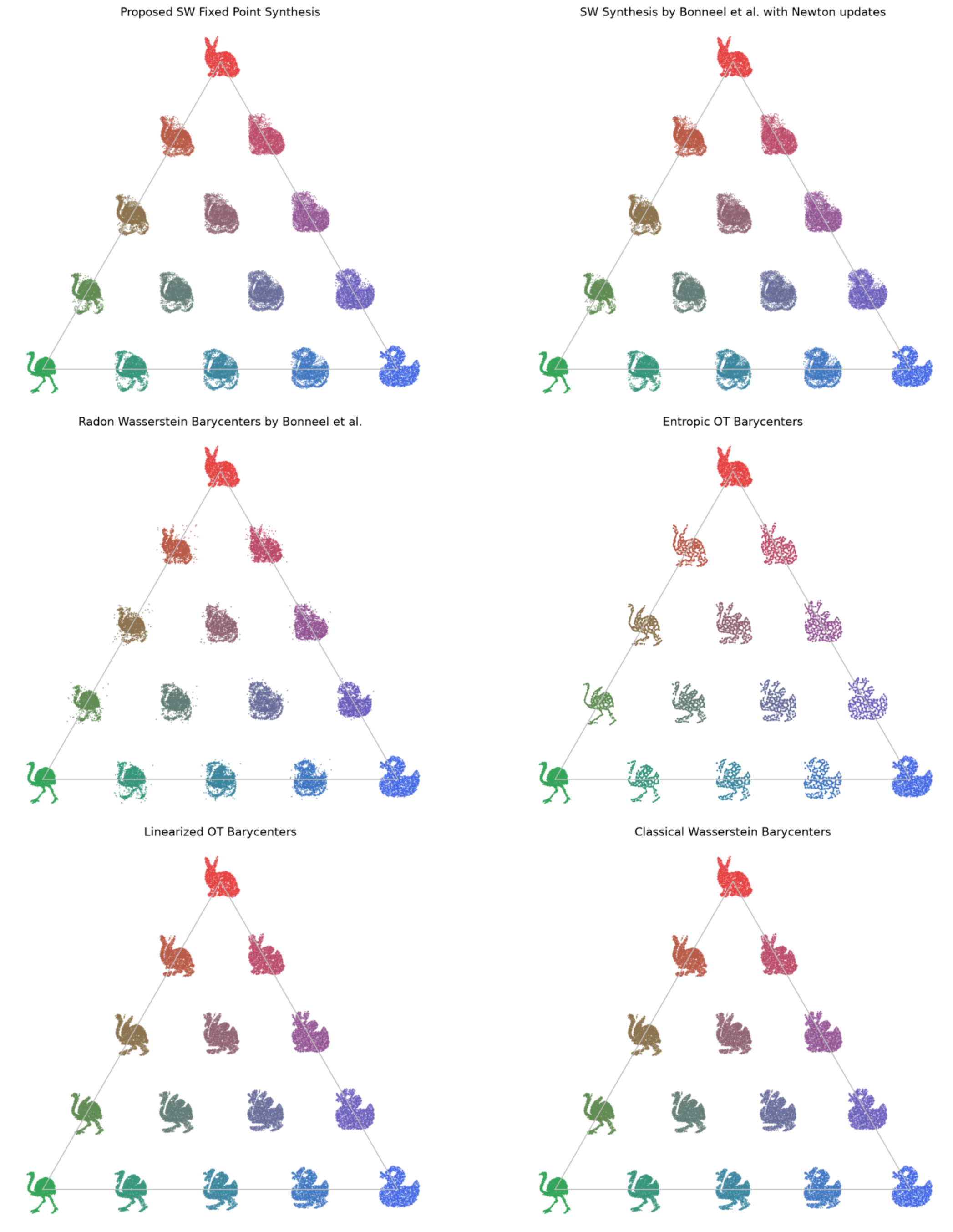}
    \caption{Animal-shape barycenters over a triangular grid of mixture coefficients.
The columns of each row show, from top to bottom, the proposed sliced
Wasserstein fixed-point synthesis and the sliced Wasserstein method of
Bonneel et al. with Newton updates; the Radon Wasserstein and entropic OT
barycenters; and the linearized OT and classical Wasserstein barycenters.
Each distribution contains $2000$ samples. The three vertices show the same
input measures for every method, while the remaining $12$ locations are
synthesized barycenters.}
    \label{fig:animals_synthesis}
\end{figure}

\begin{figure}[ht!]
    \centering
    \includegraphics[width=1\linewidth]{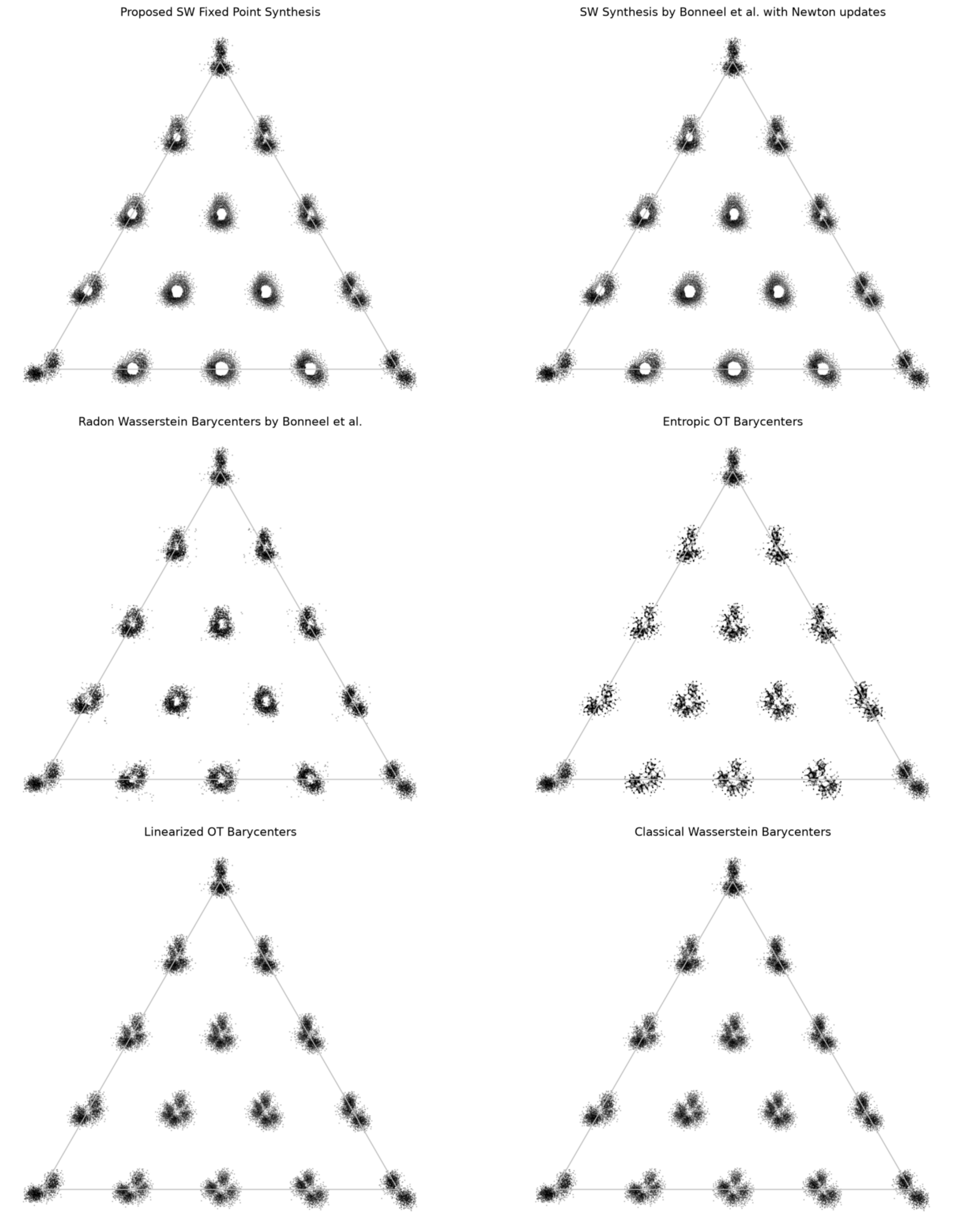}
    \caption{Multimodal Gaussian-mixture barycenters over a triangular grid of mixture
coefficients. The columns of each row show, from top to bottom, the proposed
sliced Wasserstein fixed-point synthesis and the sliced Wasserstein method
of Bonneel et al. with Newton updates; the Radon Wasserstein and entropic OT
barycenters; and the linearized OT and classical Wasserstein barycenters.
Each distribution contains $1500$ samples. The three vertices show the same
input Gaussian-mixture measures for every method, while the remaining $12$
locations are synthesized barycenters.}
    \label{fig:multimod_gauss_synthesis}
\end{figure}

\begin{figure}[ht!]
  \centering
  \includegraphics[width=\linewidth]{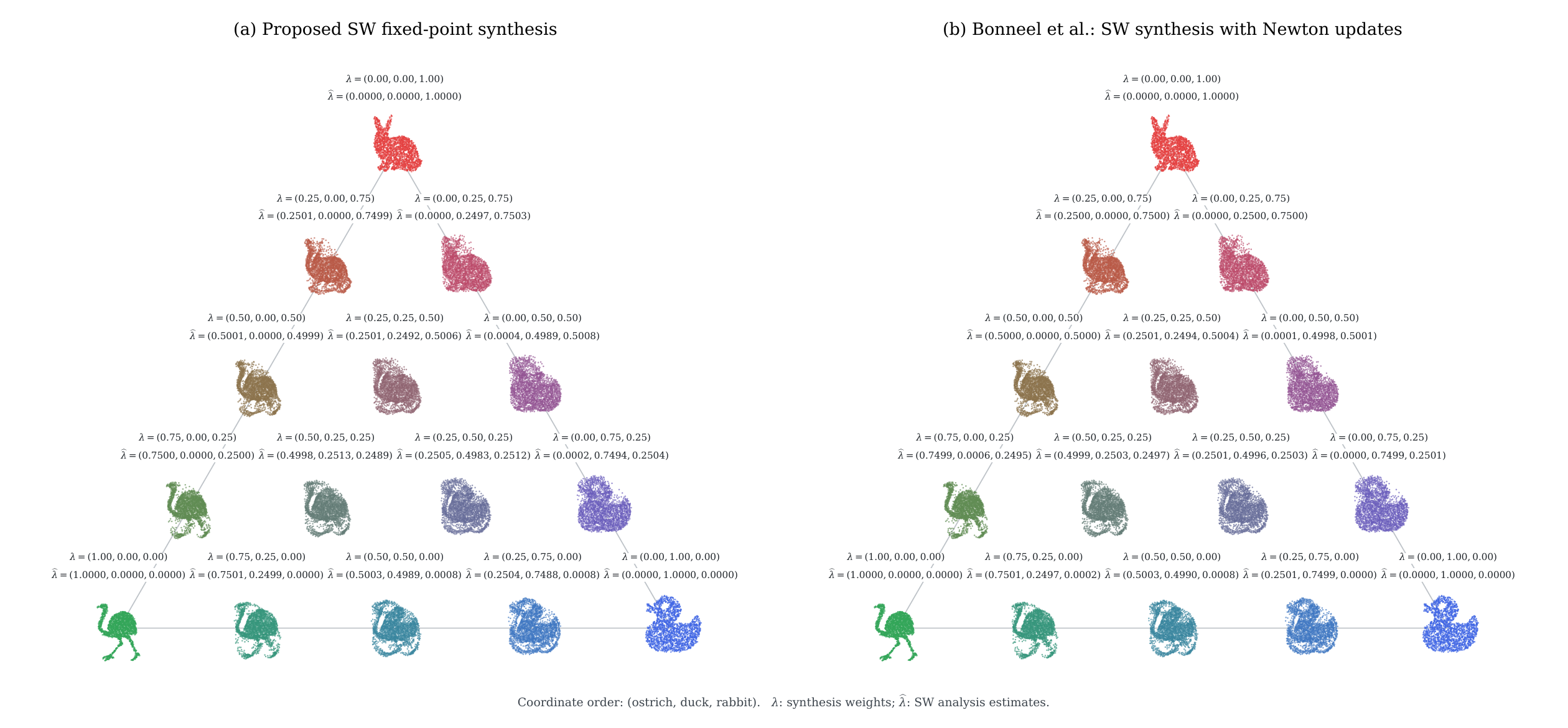}
  \includegraphics[width=\linewidth]{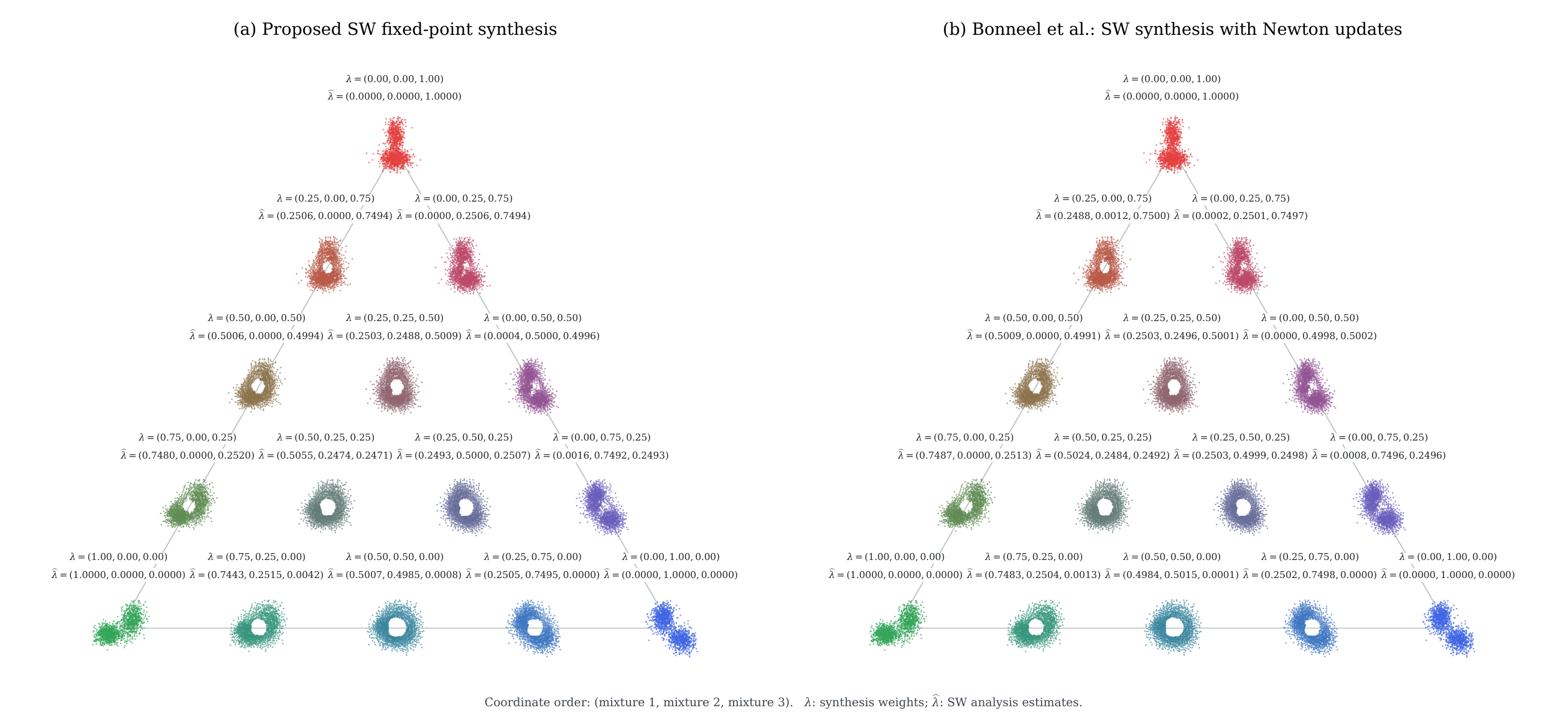}
  \includegraphics[width=0.\linewidth]{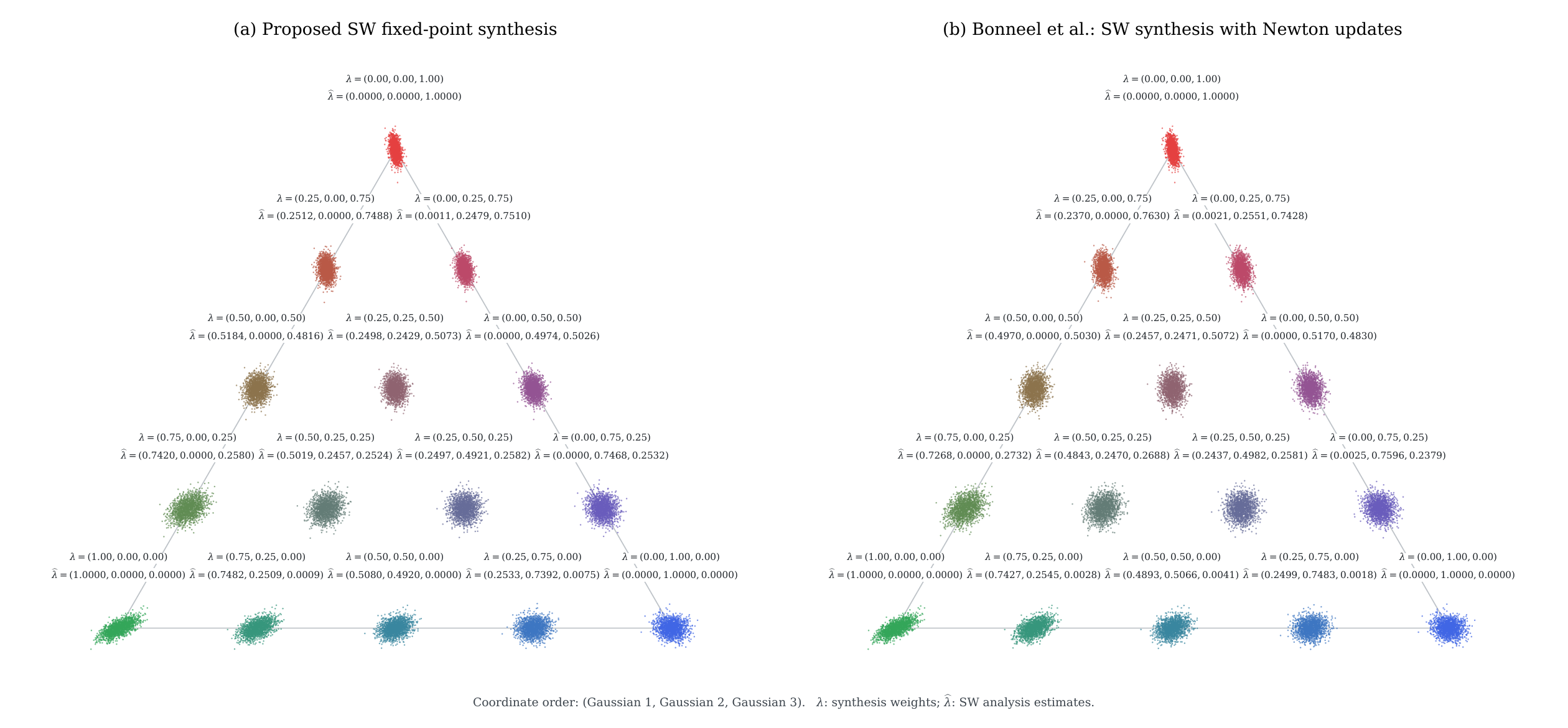}
  \caption{Synthesis and Analysis.}
  \label{fig:forward-backward}
\end{figure}

The following
paragraphs specify the constructions and numerical settings used in
our comparisons (Figures \ref{fig:animals_synthesis} and \ref{fig:multimod_gauss_synthesis}).

\paragraph{Classical Wasserstein Barycenters:}

The classical quadratic Wasserstein barycenter is defined by
\begin{equation}
    \overline{\nu}^{W_2}_{\lambda}
    \in
    \operatorname*{argmin}_{\mu\in\mathcal{P}_2(\mathbb{R}^d)}
    \frac{1}{2}
    \sum_{j=1}^{N}
    \lambda_j W_2^2(\mu,\nu_j).
    \label{eq:classical-w2-barycenter}
\end{equation}
This is the standard Wasserstein barycenter introduced in
\cite{agueh2011barycenters}.
We approximate~\eqref{eq:classical-w2-barycenter} using an equally weighted,
free-support measure $    \nu^{(n)}
    =
    \frac{1}{m}
    \sum_{i=1}^{m}\delta_{y_i^{(n)}}$.
At iteration \(n\), the optimal transport between \(\nu^{(n)}\) and each
reference measure \(\nu_j\) is computed as the assignment
\begin{equation}\label{eq:classical-assignment}
    \sigma_j^{(n)}
    \in
    \operatorname*{argmin}_{\sigma\in {S}_m}
    \sum_{i=1}^{m}
    \left\|
        y_i^{(n)}-x_{j,\sigma(i)}
    \right\|^2,
\end{equation}
where \({S}_m\) is the set of permutations of
\(\{1,\ldots,m\}\). We solve~\eqref{eq:classical-assignment} using SciPy's linear-assignment solver. The support points are then updated
by averaging their matched locations:
\begin{equation}
    y_i^{(n+1)}
    =
    \sum_{j=1}^{N}
    \lambda_j x_{j,\sigma_j^{(n)}(i)}.
    \label{eq:classical-fixed-point}
\end{equation}
Thus, each iteration alternates between computing $N$ 
optimal assignments and updating the barycenter support. 

To produce Figures \ref{fig:animals_synthesis} and \ref{fig:multimod_gauss_synthesis} together with Table \ref{tab:synthesis-runtime-summary}, we initialize the
iteration using the first reference cloud and perform at most \(K=18\)
iterations, stopping earlier if
\[
    \left(
        \frac{1}{m}
        \sum_{i=1}^{m}
        \left\|y_i^{(n+1)}-y_i^{(n)}\right\|^2
    \right)^{1/2}
    <10^{-6}.
\]
This assignment-based computation stores an \(m\times m\) cost matrix and
solves several assignment problems at every barycenter iteration. The computational complexity, in general, is $\mathcal{O}(KNm^3)
$.

\paragraph{Linearized Wasserstein Barycenters:}
We use the approach in 
\cite{werenski2025linearized}. First, we select a common base measure $    \mu_0
    =
    \frac{1}{m}
    \sum_{i=1}^{m}\delta_{z_i}$.

For each reference \(\nu_j\), we compute an optimal assignment map $T_j:\{z_1,\ldots,z_m\}
    \to
    \{x_{j,1},\ldots,x_{j,m}\}$
by solving
\begin{equation}
    T_j
    \in
    \operatorname*{argmin}_{T:\,T_{\#}\mu_0=\nu_j}
    \frac{1}{m}
    \sum_{i=1}^{m}
    \left\|z_i-T(z_i)\right\|^2.
    \label{eq:lot-reference-map}
\end{equation}
These maps provide the linear optimal transport representation of the
references around the common base measure.

The linearized barycenter has the closed form $    \overline{\nu}^{\mathrm{LOT}}_{\lambda}
    =
    \left(
        \sum_{j=1}^{N}\lambda_j T_j
    \right)_{\#}\mu_0.
$
For empirical measures, this means that its support points are simply
\begin{equation}
    y_i
    =
    \sum_{j=1}^{N}
    \lambda_j T_j(z_i),
    \qquad i=1,\ldots,m,
    \label{eq:linearized-particle-update}
\end{equation}
and hence $\overline{\mu}^{\mathrm{LOT}}_{\lambda}
    =
    \frac{1}{m}
    \sum_{i=1}^{m}\delta_{y_i}.$

In our experiments visualized in Figures \ref{fig:animals_synthesis} and \ref{fig:multimod_gauss_synthesis}, and Table \ref{tab:synthesis-runtime-summary}, the base or reference measure  \(\mu_0\) is an approximate equal-weight classical
Wasserstein barycenter of the reference measures (i.e., with $\lambda=(1/N,\dots, 1/N)$), computed using \(12\)
iterations of~\eqref{eq:classical-fixed-point}.
Computing the base measure and the maps \(T_j\) is treated as a one-time
setup cost. Once these maps have been cached, every new barycenter requires
only the weighted average in~\eqref{eq:linearized-particle-update}. This
explains the extremely small per-barycenter synthesis time in our
experiments. The linearized barycenter agrees with the classical
Wasserstein barycenter when the corresponding optimal maps satisfy the
compatibility conditions described in
\cite[Definition 4 and Proposition 2]{werenski2025linearized}; in general, however, the two
barycentric models are distinct.

\paragraph{Entropy-Regularized Wasserstein Barycenters:}

We implement the entropy-regularized barycenter fixed point developed in
\cite{mallery2025synthesis}. Given the current candidate $
    \nu^{(n)}
    =
    \frac{1}{m}
    \sum_{i=1}^{m}\delta_{y_i^{(n)}}$,
we compute an entropy-regularized coupling between \(\nu^{(n)}\) and every
reference \(\nu_j\) given by 
\begin{equation}
    \pi_{j,\varepsilon}^{(n)}
    =
    \operatorname*{argmin}_{\pi\in\Pi(\nu^{(n)},\nu_j)}
    \left\{
        \sum_{i,k}C_{j,ik}^{(n)}\pi_{ik}
        +
        \varepsilon
        \sum_{i,k}
        \pi_{ik}\bigl(\log\pi_{ik}-1\bigr)
    \right\}\, \text{ where } C_{j,ik}^{(n)}
    =
    \frac{1}{2}
    \left\|y_i^{(n)}-x_{j,k}\right\|^2.
    \label{eq:entropic-coupling}
\end{equation}
We compute~\eqref{eq:entropic-coupling} with Sinkhorn scaling. In
particular, we form $K_{j,ik}^{(n)}
    =
    \exp\left(
        -\frac{C_{j,ik}^{(n)}}{\varepsilon}
    \right)$
and alternately rescale its rows and columns to satisfy the uniform
marginals.

The entropy-regularized transport map is approximated by the barycentric
projection of the coupling:
\begin{equation}
    T_{j,\varepsilon}^{(n)}(y_i^{(n)})
    =
    \frac{
        \displaystyle
        \sum_{k=1}^{m}
        \pi_{j,\varepsilon,ik}^{(n)}x_{j,k}
    }{
        \displaystyle
        \sum_{k=1}^{m}
        \pi_{j,\varepsilon,ik}^{(n)}
    }.
    \label{eq:entropic-barycentric-projection}
\end{equation}
The support is updated using the fixed-point characterization
\begin{equation}
    y_i^{(n+1)}
    =
    \sum_{j=1}^{N}
    \lambda_j
    T_{j,\varepsilon}^{(n)}(y_i^{(n)}).
    \label{eq:entropic-fixed-point}
\end{equation}
Thus, the entropic maps are recomputed at every outer iteration. 

In Figures \ref{fig:animals_synthesis} and \ref{fig:multimod_gauss_synthesis}, and Table \ref{tab:synthesis-runtime-summary}, we use
\(80\) Sinkhorn iterations for each coupling and at most \(18\) outer
fixed-point iterations. The regularization parameters are $\varepsilon=0.002$ for the animal experiment, and $\varepsilon=0.005$ for the multimodal Gaussian-mixture experiment.
These relatively small values keep the entropic construction close to
unregularized transport.  Our implementation uses the regularized
transport cost itself, rather than the debiased Sinkhorn-divergence
variant also considered in \cite{mallery2025synthesis}.
Entropic
maps are barycentric projections of diffuse couplings, so several source
particles can be sent to almost the same conditional mean. Therefore, as reflected in our figures, the markers then
overlap. This can also be explained by the \emph{``contraction''} property stated in \cite[Lemma 9.2]{yang2023estimating}.

\paragraph{Sliced Wasserstein Barycenters in \cite{boneel}:}

Following \cite{boneel}, we approximate the spherical integral in \eqref{eq: SW} using fixed directions
\(\theta_1,\ldots,\theta_L\). In two dimensions, for instance, these are equispaced
directions on \([0,\pi)\). For a candidate point cloud
\(Y^{(n)}=(y_1^{(n)},\ldots,y_m^{(n)})\), the one-dimensional optimal
transport along \(\theta_\ell\) is found by sorting the projected candidate
points
$\langle y_i^{(n)},\theta_\ell\rangle$
and matching them with the sorted projected reference points $\langle x_{j,k},\theta_\ell\rangle$.
Let \(m_{j,\ell}^{(n)}(i)\) denote the resulting monotone match. The
finite-direction gradient field at \(y_i^{(n)}\) is
\begin{equation}
    g_i^{(n)}
    =
    \sum_{j=1}^{N}\lambda_j
    \frac{1}{L}
    \sum_{\ell=1}^{L}
    \left(
        \langle y_i^{(n)},\theta_\ell\rangle
        -
        \langle
            x_{j,m_{j,\ell}^{(n)}(i)},\theta_\ell
        \rangle
    \right)\theta_\ell.
    \label{eq:bonneel-sliced-gradient}
\end{equation}
The corresponding finite-direction Hessian is
\begin{equation}
    H_L
    =
    \frac{1}{L}
    \sum_{\ell=1}^{L}
    \theta_\ell\theta_\ell^{\mathsf T}.
    \label{eq:bonneel-hessian}
\end{equation}

We then use the Newton update
\begin{equation}
    y_i^{(n+1)}
    =
    y_i^{(n)}
    -
    H_L^{-1}g_i^{(n)}.
    \label{eq:bonneel-newton-update}
\end{equation}

For equispaced directions in two dimensions,
\(H_L=\frac{1}{2}I_2\) up to numerical precision. The animal experiment (Figure \ref{fig:animals_synthesis} and Table \ref{tab:synthesis-runtime-summary})
uses \(L=40\) directions and at most \(35\) Newton iterations. The
multimodal Gaussian-mixture experiment (Figure
\ref{fig:multimod_gauss_synthesis} and Table \ref{tab:synthesis-runtime-summary}) uses \(L=48\) directions and at
most \(45\) iterations.

\paragraph{Radon Wasserstein Barycenters in \cite{boneel}:}
The Radon construction of \cite{boneel} first treats each projection
direction independently. Let
\[
    \mathcal{R}\mu(t,\theta)
    =
    \int \delta\bigl(t-\langle x,\theta\rangle\bigr)\,d\mu(x)
\]
denote the Radon transform of $\mu$, interpreted in the distributional
sense. For fixed $\theta$, it represents the one-dimensional projected
measure $(P_\theta)_{\#}\mu$, or its density when one exists.

For each direction $\theta$, we compute the one-dimensional quadratic
Wasserstein barycenter of the projected reference measures. If
$
    Q_{j,\theta}(u)
    =
    F^{-1}_{(P_\theta)_{\#}\nu_j}(u)$,
 $u\in(0,1)$,
is the corresponding quantile function, then the barycenter quantile is
\begin{equation}
    \overline{Q}_{\lambda,\theta}(u)
    =
    \sum_{j=1}^{N}\lambda_j Q_{j,\theta}(u).
    \label{eq:radon-average-quantile}
\end{equation}
This quantile function determines the one-dimensional barycenter
   $ \overline{\rho}_{\lambda,\theta}
    =
    (\overline{Q}_{\lambda,\theta})_{\#}
    \operatorname{Unif}(0,1).
$

To reconstruct a two-dimensional image, we first represent these
one-dimensional barycenters on the detector grid. Collecting the
resulting mass profiles over all angles gives an array
$\overline{g}_{\lambda}(t,\theta)$, called the barycenter
\emph{sinogram}. We then apply a numerical inverse Radon transform.
This step reconstructs a two-dimensional image whose projections
approximately match the one-dimensional barycenters.
The independently computed barycenters need not be projections of
a common nonnegative image, so the reconstruction may contain negative
values. We clip these values to zero and normalize the result to have
unit mass, obtaining our numerical probability approximation
$\overline{\nu}^{\mathrm{Radon}}_{\lambda}$.

For the experiments in Figures~\ref{fig:animals_synthesis}
and~\ref{fig:multimod_gauss_synthesis} and
Table~\ref{tab:synthesis-runtime-summary}, each template measure is
first rasterized as a $96\times96$ discrete density. We then compute
its Radon transform at $96$ equispaced projection angles. Each column
of the resulting template sinogram represents the mass distribution
along one projection direction.

The template sinograms depend only on the reference measures and not
on the barycentric coefficients $\lambda$. We therefore compute and
store them once before constructing any barycenters, and report this
preprocessing as the one-time setup cost. For every new coefficient
vector $\lambda$, we (1) compute the barycenter quantile
\eqref{eq:radon-average-quantile} at each angle; (2) convert these
quantiles into one-dimensional mass distributions on the detector
grid, forming the barycenter sinogram; (3) numerically invert the
Radon transform to reconstruct a two-dimensional image; (4) clip
negative values to zero and normalize the image to have unit mass;
and (5) sample $m$ particles from this density for visualization.

\begin{table}[ht!]
  \centering
  \small
  \caption{Synthesis runtime.  ``Mean'' is
  the mean time for one of the 12 non-vertex barycenters, ``setup'' is reusable
  preprocessing, and ``total'' is setup plus synthesis of all 12 barycenters.
  The methods solve different barycentric problems, so the measurements
  document computational tradeoffs rather than a universal speed ranking.}
  \label{tab:synthesis-runtime-summary}
  \resizebox{\textwidth}{!}{  \begin{tabular}{lrrr@{\qquad}rrr}
    \toprule
    & \multicolumn{3}{c}{Animals ($m=2000$)}
    & \multicolumn{3}{c}{Gaussian mixtures ($m=1500$)} \\
    \cmidrule(lr){2-4}\cmidrule(lr){5-7}
    Method
      & Mean (s) & Setup (s) & Total (s)
      & Mean (s) & Setup (s) & Total (s) \\
    \midrule
    Proposed SW fixed point
      & 0.8297 & 0 & 9.9558
      & 1.0701 & 0 & 12.8417 \\
    Bonneel SW with Newton updates
      & 0.7543 & 0 & 9.0517
      & 1.1063 & 0 & 13.2760 \\
    Radon Wasserstein
      & 0.0770 & 0.2063 & 1.1301
      & 0.0920 & 0.2094 & 1.3131 \\
    Entropic OT
      & 20.6850 & 0 & 248.2203
      & 12.0453 & 0 & 144.5437 \\
    Linearized OT
      & $5.6042\!\times\!10^{-5}$ & 54.9266 & 54.9273
      & $1.2556\!\times\!10^{-4}$ & 43.4526 & 43.4541 \\
    Classical Wasserstein
      & 30.9526 & 0 & 371.4313
      & 21.6746 & 0 & 260.0952 \\
    \bottomrule
  \end{tabular}}
\end{table}

\begin{table}[ht!]
  \centering
  \small
  \caption{Summary of SW analysis using Algorithm \ref{alg: analysis} applied to barycenters synthesized by six
  methods (means over the same 12 non-vertex coefficient vectors).  Weight
  error is measured against the coefficients supplied to the corresponding
  synthesis method.  The residual is the empirical SW stationarity residual
  evaluated at the coefficients returned by the analysis algorithm.}
  \label{tab:synthesis-analysis-summary}
  \resizebox{\textwidth}{!}{  \begin{tabular}{llrrr}
    \toprule
    Dataset & Synthesis method
      & Mean $\ell^1$ weight error
      & Mean residual
      & Mean analysis time (s) \\
    \midrule
    Animals & Proposed SW fixed point
      & $1.4158\times10^{-3}$ & $3.5933\times10^{-4}$ & 0.1291 \\
    & Bonneel SW with Newton updates
      & $6.2187\times10^{-4}$ & $3.3234\times10^{-4}$ & 0.1181 \\
    & Radon Wasserstein
      & $9.3748\times10^{-2}$ & $1.1203\times10^{-2}$ & 0.1073 \\
    & Entropic OT
      & $8.5506\times10^{-2}$ & $8.5137\times10^{-3}$ & 0.1044 \\
    & Linearized OT
      & $2.2627\times10^{-2}$ & $3.1012\times10^{-3}$ & 0.1032 \\
    & Classical Wasserstein
      & $2.1671\times10^{-2}$ & $3.0865\times10^{-3}$ & 0.1141 \\
    \midrule
    Gaussian mixtures & Proposed SW fixed point
      & $3.4900\times10^{-3}$ & $1.0522\times10^{-3}$ & 0.1593 \\
    & Bonneel SW with Newton updates
      & $1.8737\times10^{-3}$ & $5.1431\times10^{-4}$ & 0.1323 \\
    & Radon Wasserstein
      & $1.0529\times10^{-1}$ & $1.5438\times10^{-2}$ & 0.1313 \\
    & Entropic OT
      & $2.9572\times10^{-2}$ & $1.4433\times10^{-2}$ & 0.1353 \\
    & Linearized OT
      & $3.1230\times10^{-2}$ & $1.0220\times10^{-2}$ & 0.1369 \\
    & Classical Wasserstein
      & $2.5721\times10^{-2}$ & $9.8864\times10^{-3}$ & 0.1478 \\
    \bottomrule
  \end{tabular}}
\end{table}

\paragraph{Interpretation of the Synthesis Comparisons.}
Both SW synthesis procedures, Algorithm \ref{alg: synth} and SW synthesis with Newton updates proposed in \cite{boneel}, use projected monotone matching and yield
similar computational scales and small SW stationarity residuals.
For outputs of the two SW synthesis schemes, mean $\ell^1$ weight errors range
from $6.22\times10^{-4}$ to $3.49\times10^{-3}$; for barycenters generated in
the other tested transport geometries, they range from
$2.17\times10^{-2}$ to $1.05\times10^{-1}$.  This separation indicates
compatibility with the SW model.  Larger discrepancies under other geometries
do not imply that their own synthesis objectives were poorly optimized, rather that the geometries and barycentric spaces are different.
Proposed synthesis takes $0.83$--$1.07$ seconds per
barycenter, comparable to the tested SW/Newton method.  Radon and amortized
linearized synthesis are faster per output, whereas the tested entropic and
classical Wasserstein implementations are slower.  
The analysis timings in Table~\ref{tab:synthesis-analysis-summary} are
$0.10$--$0.16$ seconds per query.

\subsection{Stability Experiment of the Analysis Algorithm under Point-Cloud Corruptions}\label{app:stab}

We next use the ModelNet-C component of PointCloud-C
\citep{ren2022benchmark}, derived from ModelNet40 \citep{wu20153d},
and the five categories used by \citet{mallery2025synthesis}:
airplane, bed, guitar, monitor, and vase. Each of ten randomized trials
uses three clean references per class, $L=32$ directions, and 20 test
objects per class, disjoint from the references. 

The experiment in this section investigates whether the estimated coordinates remain stable under point-cloud corruptions (see Figure \ref{fig:corrup}).

Every cloud is centered and analyzed in its clean form ($X$) and under five
corruptions ($\widetilde{X}$): two levels of local
point removal on concentrated neighborhoods, global point dropout throughout the cloud, jitter (perturb point coordinates), and local
addition (insert a concentrated group of points).

These objects are not assumed to be exact
SW barycenters of a dictionary; their estimated
coordinates are understood as a way of representing data through the SW BCM, and 
the purpose is to test the \emph{analysis map} that retrieves coordinates $\widehat{\lambda}(X)=(\widehat{\lambda_1}(X),\dots,\widehat{\lambda_N}(X))$, where, in this experiment, $N=15$ corresponding to 5 classes and 3 randomly selected clean templates per class. 

As explained in the main text, summing within a class $c$ gives its \emph{class mass}, $\widehat p_c(X)=\sum_{j\in\mathcal I_c}\widehat\lambda_j(X),$
where $\mathcal I_c$ indexes that class's references. 

To assess stability, we compare each object's class masses before  and
after corruption using $\|\widehat p(\widetilde X)-\widehat p(X)\|_1$.
Figure~\ref{fig:corrup} shows smaller mean
changes under jitter ($0.067$) and global dropout ($0.132$) than under
local removal or addition ($0.275$--$0.402$). The class-level
representation is therefore more stable under jitter and global dropout
at the tested severities. %

\begin{figure}[H]
    \centering
    \includegraphics[width=0.75\linewidth]{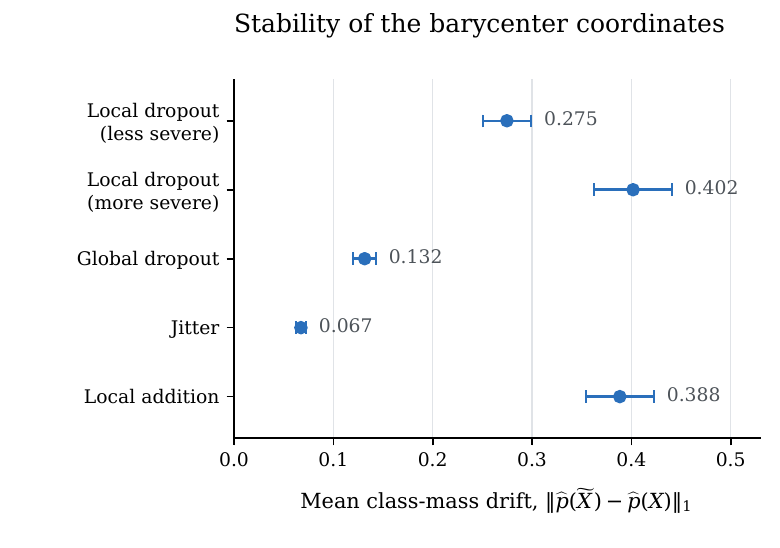}
    \vspace{-0.25in}
    \caption{Stability of the barycenter coordinates from the SW BCM classes on ModelNet-C with three clean
  templates per class (5 classes in total). We visualize how the assigned class-mass changes/drifts under different corruptions. Precisely, each point is the mean paired drift
  $\|\widehat p(\widetilde X)-\widehat p(X)\|_1$, averaged over the same
  100 objects in each trial; smaller values indicate greater stability.
  Error bars show 95\% Student-$t$ confidence intervals over ten randomized
  trials.
  The less and more severe local dropout settings remove 200 and
300 of the original 1,024 points, respectively. The other corruptions
use the highest severity setting of ModelNet-C~\citep{ren2022benchmark},
corresponding to index 4 in the official implementation's zero-based
indexing.
  }
    \label{fig:corrup}
\end{figure}

A plausible explanation for these results is that the corruptions affect the spatial
distribution of mass differently.  Global dropout samples points throughout the
object: although only $256$ of the original $1{,}024$ points remain, uniform
thinning preserves the empirical distribution in expectation before centering.
Jitter instead introduces small, independent, zero-mean displacements%
~\citep{ren2022benchmark}.
Both perturbations can therefore leave the overall projected distributions
comparatively close to those of the clean object.  Local dropout removes
spatially concentrated neighborhoods and may erase distinctive shape parts;
local addition places extra mass near a few locations.  In the latter case,
the $500$ added points account for approximately $32.8\%$ of the resulting
uniform empirical measure.  These localized changes can alter projected
quantiles across many directions and hence the transport fields used to
recover the coordinates.

\clearpage
\newpage
\bibliographystyle{apalike}
\bibliography{references}

\end{document}